 \newtheorem{thm}{Theorem}[section]
 \newtheorem{cor}[thm]{Corollary}
 \newtheorem{lem}[thm]{Lemma}
 \newtheorem{prop}[thm]{Proposition}
 \theoremstyle{definition}
 \newtheorem{defn}[thm]{Definition}
 \theoremstyle{remark}
 \newtheorem{rem}[thm]{Remark}
 \newtheorem{ex}[thm]{Example}
 \newtheorem{Fact}[thm]{Fact}
 \numberwithin{equation}{section}
\newcommand{\dom}{\operatorname{dom}}
\newcommand{\intt}{\operatorname{int}}
\newcommand{\co}{\operatorname{co}}
\newcommand{\ran}{\operatorname{ran}}
\newcommand{\sgn}{\operatorname{sgn}}
\newcommand{\Id}{\operatorname{Id}}
\newcommand{\gra}{\operatorname{gra}}
\newcommand{\argmin}{\operatornamewithlimits{argmin}}
\newcommand{\R}{\operatorname{\mathbb{R}}}
\newcommand{\N}{\operatorname{\mathbb{N}}}
\newcommand{\ORR}{\operatorname{\overline{\mathbb{R}}}}
\begin{document}
\title{Proximal mappings and Moreau envelopes of single-variable convex piecewise cubic functions and multivariable gauge functions}
\author{C. Planiden\thanks{Mathematics and Applied Statistics, University of Wollongong, Wollongong, NSW, 2522, Australia. chayne@uow.edu.au} \and X. Wang\thanks{Mathematics, University of British Columbia Okanagan, Kelowna, B.C., V1V 1V7, Canada. shawn.wang@ubc.ca}}
\date{January 24, 2019}
\maketitle
\begin{abstract}
This work presents a collection of useful properties of the Moreau envelope for finite-dimensional, proper, lower semicontinuous, convex functions. In particular, gauge functions and piecewise cubic functions are investigated and their Moreau envelopes categorized. Characterizations of convex Moreau envelopes are established; topics include strict convexity, strong convexity and Lipschitz continuity.
\end{abstract}
\noindent\textbf{AMS Subject Classification.}\medskip\\
Primary 49J53, 52A41; Secondary 49J50, 26C05\medskip\\
\noindent\textbf{Keywords.}\medskip\\
Gauge function, Lipschitz continuous, Moreau envelope, piecewise cubic function, proximal mapping, strictly convex, strongly convex\medskip\\

%%%%%
\section{Introduction}\label{sec:intro}
%%%%%

The Moreau envelope $e_rf$ was introduced in its original form by Jean-Jacques Moreau in the mid-1960s \cite{proximite}. It is an infimal convolution of two functions $f$ and $q_r,$ where $r>0$ and $q_r=\frac{r}{2}\|\cdot\|^2$. The Moreau envelope offers many benefits in Optimization, such as the smoothing of the nonsmooth objective function $f$ \cite{moreau1963,proximite} while maintaining the same minimum and minimizers of $f$ in the case where $f$ is proper, lower semicontinuous (lsc) and convex \cite{rockwets,funcanal}. Also in the convex setting, $e_rf$ is differentiable and its gradient has an explicit representation, even when $f$ itself is not differentiable \cite{rockwets}. As a result, much research has been done on properties of the Moreau envelope, including differentiability \cite{chen2012themoreau,diffprop,genhess}, regularization \cite{bayen2016about,hinter2009moreau,jourani2017moreau,keuthen2015moreau,practasp,mifflin1999properties} and convergence of the related proximal-point algorithms for finding a minimizer \cite{aragon2007convergence,bento2014finite,hare2014derivative,xiao2014proximal,zaslavski2010convergence}.\par In this work, we continue the development of convex Moreau envelope theory. We endeavour to show the advantages that this form of regularization has to offer and make comparisons to the Pasch-Hausdorff envelope. Most of the focus is on the set of convex Moreau envelopes; we work to establish characterizations about when a function is a Moreau envelope of a proper, lsc, convex function. We also consider the differentiability properties of $e_rf,$ annotating the characteristics of the proximal mapping and the Moreau envelope for $\mathcal{C}^k$ functions.\par The main contributions of this paper are the analysis of the proximal mapping and Moreau envelope of two particular families of convex functions: piecewise cubic functions and gauge functions. Explicit formulae for, and method of calculation of, the proximal mapping and Moreau envelope for any single-variable convex piecewise-cubic function are given. The Moreau envelope is used to smooth the ridges of nondifferentiability (except the kernel) of any gauge function, while maintaining its status as a gauge function. The special case of norm functions is analyzed as well; the Moreau envelope is used to convert any norm function into one that is smooth everywhere expect at the origin. For both piecewise-cubic functions and gauge functions, several explicit examples with illustrations are included. To the best of our knowledge, the closed forms of Moreau envelopes of many examples given here have not been realized until now. The piecewise cubic work extends the results for piecewise linear-quadratic functions found in \cite{bajaj2017visualization,convhullalg,plqmodel}, and the study of Moreau envelopes of gauge functions is new. \par The remainder of this paper is organized as follows. Section \ref{sec:prelim} contains notation, definitions and facts that are used throughout. In Section \ref{sec:props}, several known results about the Moreau envelope are collected first, then new results on the set of convex Moreau envelopes are presented. We provide an upper bound for the difference $f-e_rf$ when $f$ is a Lipschitz continuous function. We establish several characterizations of the Moreau envelope of $f$ convex, based on strict convexity of $f,$ strong convexity of the Fenchel conjugate $(e_rf)^*$ and Lipschitz continuity of $\nabla e_rf.$ We discuss the differentiability of $e_rf,$ proving that $f\in\mathcal{C}^k\Rightarrow e_rf\in\mathcal{C}^k.$ Then we focus on explicit expressions for the Moreau envelope and the proximal mapping for convex piecewise functions on $\R,$ which sets the stage for the section that follows. Section \ref{sec:pcf} concentrates on the set of convex piecewise-cubic functions on $\R$ and their Moreau envelopes. We lay out the piecewise domain of $e_rf$ for $f$ piecewise-cubic and present a theorem that states the proximal mapping and Moreau envelope. Section \ref{sec:anf} deals with the smoothing of an arbitrary gauge function by way of the Moreau envelope. It is shown that given a gauge function $f,$ the function $\sqrt{e_r(f^2)}$ is also a gauge function and is differentiable everywhere except on the kernel. A corollary about norm functions follows; if $f$ is a norm function, then $\sqrt{e_r(f^2)}$ is a norm function that is differentiable everywhere except at the origin. Several examples and illustrations are provided in this section. Section \ref{sec:conc} summarizes the results of this work.

%%%%%
\section{Preliminaries}\label{sec:prelim}
%%%%%

%%%
\subsection{Notation}
%%%

All functions in this work are defined on $\R^n,$ Euclidean space equipped with inner product defined $\langle x,y\rangle=\sum_{i=1}^nx_iy_i$ and induced norm $\|x\|=\sqrt{\langle x,x\rangle}.$ The extended real line $\R\cup\{\infty\}$ is denoted $\overline{\R}.$ We use $\Gamma_0(\R^n)$ to represent the set of proper, convex, lower semicontinuous (lsc) functions on $\R^n.$ The identity operator is denoted $\Id.$ We use $N_C(x)$ to represent the normal cone to $C$ at $x,$ as defined in \cite{rockwets}. The domain and the range of an  operator $A$ are denoted $\dom A$ and $\ran A,$ respectively. Pointwise convergence is denoted $\overset{p}\to,$ epiconvergence $\overset{e}\to.$

%%%
\subsection{Definitions and facts}
%%%

In this section, we collect some definitions and facts that we need for proof of the main results.
\begin{defn}
The \emph{graph} of an operator $A:\R^n\rightrightarrows\R^n$ is defined
$$\gra A=\{(x,x^*):x^*\in Ax\}.$$Its inverse $A^{-1}:\R^n\rightrightarrows\R^n$ is defined by the graph
$$\gra A^{-1}=\{(x^*,x):x^*\in Ax\}.$$
\end{defn}
\begin{defn}
For any function $f:\R^n\rightarrow\ORR,$ the \emph{Fenchel conjugate} of $f$ is denoted $f^*:\R^n\rightarrow\ORR$ and defined by
$$f^*(x^*)=\sup\limits_{x\in\R^n}[\langle x^*,x\rangle-f(x)].$$
\end{defn}
\begin{defn}\label{df:Moreau}
For a proper, lsc function $f:\R^n\rightarrow\ORR,$ the \emph{Moreau envelope} of $f$ is denoted $e_rf$
and defined by
$$e_rf(x)=\inf\limits_{y\in\R^n}\left\{f(y)+\frac{r}{2}\|y-x\|^2\right\}.$$
The vector $x$ is called the \emph{prox-centre} and the scalar $r\geq0$ is called the \emph{prox-parameter}. The associated \emph{proximal mapping} is the set of all points at which the above infimum is attained, denoted $P_rf:$
$$P_rf(x)=\argmin\limits_{y\in\R^n}\left\{f(y)+\frac{r}{2}\|y-x\|^2\right\}.$$
\end{defn}
\begin{defn}
A function $f\in\Gamma_0(\R^n)$ is \emph{$\sigma$-strongly convex} if there exists a modulus $\sigma>0$ such that $f-\frac{\sigma}{2}\|\cdot\|^2$ is convex. Equivalently, $f$ is $\sigma$-strongly convex if there exists $\sigma>0$ such that for all $\lambda\in(0,1)$ and for all $x,y\in\R^n,$
$$f(\lambda x+(1-\lambda)y)\leq\lambda f(x)+(1-\lambda)f(y)-\frac{\sigma}{2}\lambda(1-\lambda)\|x-y\|^2.$$
\end{defn}
\begin{defn}
A function $f\in\Gamma_0(\R^n)$ is \emph{strictly convex} if for all $x,y\in\dom f,$ $x\neq y$ and all $\lambda\in(0,1),$
$$f(\lambda x+(1-\lambda)y)<\lambda f(x)+(1-\lambda)f(y).$$
\end{defn}
\begin{defn}
A function $f\in\Gamma_0(\R^n)$ is \emph{essentially strictly convex} if $f$ is strictly convex on every convex subset of $\dom\partial f.$
\end{defn}
\noindent Next, we have some facts about the Moreau envelope, including differentiability, upper and lower bounds, pointwise convergence characterization, linear translation and evenness.
\begin{Fact}[Inverse Function Theorem]\emph{\cite[Theorem 5.2.3]{hamilton1982inverse}}\label{fact:inverse}
Let $f:U\to\R^n$ be $\mathcal{C}^k$ on the open set $U\subseteq\R^n.$ If at some point the Jacobian of $f$ is invertible, then there exist $V\subseteq\R^n$ open and $g:V\to\R^n$ of class $\mathcal{C}^k$ such that
\begin{itemize}
\item[(i)]$v_0=f(u_0)\in V$ and $g(v_0)=u_0;$
\item[(ii)]$U_0=g(V)$ is open and contained in $U;$
\item[(iii)]$f(g(v))=v~\forall v\in V.$
\end{itemize}
Thus, $f:U_0\to V$ is a bijection and has inverse $g:V\to U_0$ of class $\mathcal{C}^k.$
\end{Fact}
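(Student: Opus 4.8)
The plan is to reduce to a normalized situation and then combine the Banach contraction principle with a bootstrap on the order of smoothness. First I would translate coordinates so that the distinguished point $u_0$ and its image are the origin, and replace $f$ by $\bigl(Df(u_0)\bigr)^{-1}\circ f$; since $Df(u_0)$ is invertible by hypothesis, this reduces matters to the case $u_0 = 0$, $f(0) = 0$, $Df(0) = \Id$. Because $f\in\mathcal{C}^k$ with $k\ge 1$, the map $x\mapsto x - f(x)$ has Jacobian vanishing at $0$, so by continuity of $Df$ there is a closed ball $\overline{\B}(0,\rho)\subseteq U$ on which its Jacobian has operator norm at most $\tfrac12$; the mean-value inequality then shows $x\mapsto x - f(x)$ is $\tfrac12$-Lipschitz on $\overline{\B}(0,\rho)$.

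Next, for each $v$ with $\|v\|\le\rho/2$ I would consider $\phi_v(x) = v + x - f(x)$. The Lipschitz bound makes $\phi_v$ map $\overline{\B}(0,\rho)$ into itself and be a $\tfrac12$-contraction there, so by the Banach fixed point theorem it has a unique fixed point $x = g(v)\in\overline{\B}(0,\rho)$, which is precisely the unique solution of $f(x) = v$ in that ball. Taking $V = \B(0,\rho/2)$ and $U_0 = g(V)\cap\B(0,\rho)$ gives a bijection $f\colon U_0\to V$ with inverse $g$, and one has $v_0 = f(u_0)\in V$, $g(v_0) = u_0$, $f(g(v)) = v$ for all $v\in V$, so items (i)--(iii) hold. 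Comparing the fixed-point equations for $v$ and $v'$ with the contraction estimate yields $\|g(v) - g(v')\|\le 2\|v - v'\|$, so $g$ is Lipschitz, hence continuous. Openness of $U_0$, and the openness assertion (ii) in general, follows by running the same construction at every point of the domain, using that the invertible matrices form an open set so $Df$ stays invertible near any point where it is invertible.

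Then I would show $g$ is differentiable at each $v_0\in V$ with $Dg(v_0) = \bigl(Df(g(v_0))\bigr)^{-1}$: writing $u_0 = g(v_0)$ and substituting $v - v_0 = f(g(v)) - f(u_0)$, the differentiability of $f$ at $u_0$ together with the Lipschitz control on $g$ gives $g(v) - g(v_0) - \bigl(Df(u_0)\bigr)^{-1}(v - v_0) = o(\|v - v_0\|)$. Finally, the regularity bootstrap: the identity $Dg(v) = \bigl(Df(g(v))\bigr)^{-1}$ exhibits $Dg$ as the composition of $g$, the $\mathcal{C}^{k-1}$ map $Df$, and matrix inversion, which is $\mathcal{C}^{\infty}$ on the open set of invertible matrices. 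An induction on $j$ then shows that if $g\in\mathcal{C}^j$ for some $j\le k-1$ then $Dg\in\mathcal{C}^{j}$, hence $g\in\mathcal{C}^{j+1}$; iterating up to $j = k-1$ gives $g\in\mathcal{C}^k$.

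I expect the main obstacle to be the differentiability step for $g$ — extracting the $o(\|v - v_0\|)$ estimate cleanly requires combining the a priori only Lipschitz control on $g$ with the differentiability of $f$ and the uniform invertibility of $Df$ near $u_0$ — together with organizing the smoothness induction so the orders of differentiability line up. The existence and uniqueness of the local inverse is routine once the $\tfrac12$-contraction estimate is in hand.
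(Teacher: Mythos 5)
The paper does not prove this statement: it is quoted as a known Fact, with the proof deferred to the cited reference (Hamilton's survey, Theorem 5.2.3), so there is no internal argument to compare yours against. Your self-contained proof is the standard contraction-mapping argument and is correct: the normalization to $Df(0)=\Id$, the $\tfrac12$-Lipschitz bound on $x\mapsto x-f(x)$, the Banach fixed point for $\phi_v$, the Lipschitz estimate $\|g(v)-g(v')\|\le 2\|v-v'\|$, the difference-quotient computation giving $Dg(v)=\bigl(Df(g(v))\bigr)^{-1}$, and the bootstrap through matrix inversion on the open set of invertible matrices all go through. Two small points you could make explicit: first, the self-map and contraction estimates give $\|g(v)\|\le 2\|v\|<\rho$ for $v\in\B(0,\rho/2)$, so the fixed point automatically lies in the \emph{open} ball, which is needed both for $f(g(v))=v$ to reflect an interior minimum of nothing in particular but for $U_0=g(V)$ to make sense as claimed and for the differentiability argument at interior points; second, openness of $U_0$ follows more directly from the identity $g(V)=f^{-1}(V)\cap\B(0,\rho)$ (a consequence of uniqueness of the fixed point) and continuity of $f$, rather than re-running the construction at every point. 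Neither is a gap in substance.
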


\begin{Fact}\emph{\cite[Proposition 12.9]{convmono}, \cite[Theorem 1.25]{rockwets}}\label{fact:egoestoinff}
Let $f\in\Gamma_0(\R^n).$ Then for all $x\in\R^n,$
\begin{itemize}
\item[(i)] $\inf f\leq e_rf(x)\leq f(x),$
\item[(ii)] $\lim\limits_{r\nearrow\infty}e_rf(x)=f(x),$ and
\item[(iii)] $\lim\limits_{r\searrow0}e_rf(x)=\inf f.$
\end{itemize}
\end{Fact}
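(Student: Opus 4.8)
The plan is to obtain all three items directly from the definition of $e_rf$, using only elementary manipulations of infima together with the lower semicontinuity of $f$ and the standard fact that every $f\in\Gamma_0(\R^n)$ admits an affine minorant, say $f(\cdot)\ge\langle a,\cdot\rangle+b$; this minorant also guarantees that $e_rf(x)\in\R$ for every $x$ and every $r>0$, so that the statements are meaningful.

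For (i), the upper bound is immediate: taking $y=x$ as a feasible point in $e_rf(x)=\inf_y\{f(y)+\frac r2\|y-x\|^2\}$ gives $e_rf(x)\le f(x)$. For the lower bound, $f(y)+\frac r2\|y-x\|^2\ge f(y)\ge\inf f$ for every $y\in\R^n$, and taking the infimum over $y$ yields $e_rf(x)\ge\inf f$.

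For (ii) and (iii) I would first record that $r\mapsto e_rf(x)$ is nondecreasing, since for each fixed $y$ the map $r\mapsto f(y)+\frac r2\|y-x\|^2$ is nondecreasing on $[0,\infty)$; hence the two monotone limits $\lim_{r\searrow0}e_rf(x)$ and $\lim_{r\nearrow\infty}e_rf(x)$ exist in $\ORR$. Item (iii) then follows by squeezing: $e_rf(x)\ge\inf f$ by (i), while for each fixed $y$ we have $e_rf(x)\le f(y)+\frac r2\|y-x\|^2\to f(y)$ as $r\searrow0$, so $\lim_{r\searrow0}e_rf(x)\le f(y)$ for every $y$, hence $\le\inf f$. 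For item (ii), the bound $e_rf(x)\le f(x)$ gives $\limsup_{r\nearrow\infty}e_rf(x)\le f(x)$, so the content is the reverse inequality. Since $f+\frac r2\|\cdot-x\|^2$ is proper, lsc, strictly convex and coercive, it attains its infimum at a unique minimizer $y_r\in P_rf(x)$; combining $f(y_r)+\frac r2\|y_r-x\|^2=e_rf(x)\le f(x)$ with the affine minorant shows that $\frac r2\|y_r-x\|^2$ is bounded above by a quantity that is at most linear in $\|y_r-x\|$, which forces $\|y_r-x\|\to0$ as $r\to\infty$ (the case $x\notin\dom f$ being handled by a similar contradiction argument using boundedness of $e_rf(x)$). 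Then lower semicontinuity of $f$ gives $\liminf_{r\to\infty}f(y_r)\ge f(x)$, and since $e_rf(x)\ge f(y_r)$ we conclude $\liminf_{r\to\infty}e_rf(x)\ge f(x)$, completing (ii).

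I expect the one genuinely delicate step to be the coercivity argument inside (ii): showing that the proximal points $y_r$ exist and cluster at the prox-centre $x$. This is precisely where properness of $f$ and the affine-minorant bound are essential; the remaining arguments amount to routine inequality chasing combined with the monotonicity of $e_rf$ in $r$.
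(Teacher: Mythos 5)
Your proof is correct. The paper states this result as a Fact imported from \cite[Proposition 12.9]{convmono} and \cite[Theorem 1.25]{rockwets} and gives no proof of its own, so there is nothing in the paper to compare against; your argument (the trivial bounds for (i), monotonicity of $r\mapsto e_rf(x)$, the squeeze for (iii), and the affine-minorant estimate forcing $\|P_rf(x)-x\|\to0$ combined with lower semicontinuity for (ii), with the $x\notin\dom f$ case handled by contradiction) is precisely the standard textbook proof found in those references and is sound.
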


\begin{Fact}\emph{\cite[Theorem 7.37]{rockwets}}\label{fact:epi}
Let $\{f^\nu\}_{\nu\in\N}\subseteq\Gamma_0(\R^n)$ and $f\in\Gamma_{0}(\R^n).$ Then $f^\nu\overset{e}\rightarrow f$ if and only if $e_rf^\nu\overset{p}\rightarrow e_rf$.
Moreover, the pointwise convergence of $e_rf^\nu$ to $e_rf$ is uniform on all bounded subsets of $\R^n,$ hence yields epi-convergence to $e_rf$ as well.
\end{Fact}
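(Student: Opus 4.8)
The plan is to avoid working with the infimal-convolution definition directly and instead pass to Fenchel conjugates, where the Moreau envelope has the transparent description
\[
(e_rf)^* = f^* + \tfrac{1}{2r}\|\cdot\|^2 ,
\]
valid for every $f\in\Gamma_0(\R^n)$. I would first establish this identity by noting that $e_rf$ is the exact infimal convolution $f\,\square\,\tfrac r2\|\cdot\|^2$ and invoking $(g\,\square\,h)^* = g^*+h^*$ together with $\big(\tfrac r2\|\cdot\|^2\big)^* = \tfrac1{2r}\|\cdot\|^2$; exactness of the convolution is free here because the quadratic kernel is real-valued and coercive. I would also record at the start that each of $e_rf$ and $e_rf^\nu$ is a finite-valued convex function on all of $\R^n$: bounded above by $f(z)+\tfrac r2\|z-\cdot\|^2$ for any fixed $z\in\dom f$, and bounded below because $f$ majorizes an affine function.

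With this in place the argument needs only two standard facts (e.g.\ from \cite{rockwets}): (a) for a sequence of finite-valued convex functions on $\R^n$ converging to a finite-valued convex function, pointwise convergence, uniform convergence on bounded sets, and epi-convergence all coincide; and (b) the Attouch--Wijsman duality, that for sequences in $\Gamma_0(\R^n)$ epi-convergence is preserved by Fenchel conjugation. Applying (a) to $\{e_rf^\nu\}$ and $e_rf$ immediately converts ``$e_rf^\nu\overset{p}{\to}e_rf$'' into ``$e_rf^\nu\overset{e}{\to}e_rf$'' and at the same time delivers the ``moreover'' clause, so the task reduces to proving $f^\nu\overset{e}{\to}f\iff e_rf^\nu\overset{e}{\to}e_rf$.

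For that I would run the chain
\[
f^\nu\overset{e}{\to}f
\iff f^{\nu*}\overset{e}{\to}f^*
\iff f^{\nu*}+\tfrac1{2r}\|\cdot\|^2\overset{e}{\to}f^*+\tfrac1{2r}\|\cdot\|^2
\iff (e_rf^\nu)^*\overset{e}{\to}(e_rf)^*
\iff e_rf^\nu\overset{e}{\to}e_rf ,
\]
in which the first and last equivalences are (b), the second-to-last uses the conjugacy identity, and the middle equivalence is the elementary observation that adding a fixed real-valued continuous convex function to every term of an epi-convergent sequence preserves epi-convergence (check it against the sequential definition: both the recovery-sequence condition and the epi-$\liminf$ inequality pass through the continuous summand $\tfrac1{2r}\|\cdot\|^2$).

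I expect the main obstacle to be bookkeeping rather than ideas: one must keep properness and lower semicontinuity intact at every step so that Attouch--Wijsman and the infimal-convolution conjugacy formula genuinely apply. If instead a self-contained ``primal'' proof of the forward implication were wanted, the real difficulty would be a compactness argument: for fixed $x$, pick $y^\nu\in P_rf^\nu(x)$, show $\{y^\nu\}$ is bounded (using that $e_rf^\nu(x)$ stays bounded above when evaluated along a recovery sequence for some point of $\dom f$, while the quadratic term would push $f^\nu(y^\nu)\to-\infty$ along any unbounded subsequence, contradicting convexity together with the epi-$\liminf$ inequality), extract $y^{\nu_k}\to\bar y$, and conclude $\liminf_k e_rf^{\nu_k}(x)\ge f(\bar y)+\tfrac r2\|\bar y-x\|^2\ge e_rf(x)$; the matching bound $\limsup_\nu e_rf^\nu(x)\le e_rf(x)$ then follows from a recovery sequence at $P_rf(x)$. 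The conjugate route sidesteps that compactness step entirely.
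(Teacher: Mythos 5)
Your argument is correct, and it is worth noting that the paper itself offers no proof here: the statement is quoted as a Fact with a citation to \cite[Theorem 7.37]{rockwets}, so the natural comparison is with that textbook's proof. The textbook runs essentially the ``primal'' argument you relegate to your final paragraph: for fixed $x$ one shows the tilted functions $y\mapsto f^\nu(y)+\tfrac r2\|y-x\|^2$ epi-converge and are eventually level-bounded (equi-coercive, by the common affine minorant coming from convexity), so their infima converge, giving $e_rf^\nu(x)\to e_rf(x)$; the converse and the uniformity clause then come from the convexity and finiteness of the envelopes. Your main route --- conjugating to $(e_rf)^*=f^*+\tfrac1{2r}\|\cdot\|^2$, applying Wijsman's conjugacy theorem twice, absorbing the quadratic via the ``adding a fixed finite continuous function preserves epi-convergence'' lemma, and finally using the equivalence of pointwise, locally uniform and epi-convergence for finite convex functions --- is a genuinely different and clean reduction that also delivers the ``moreover'' clause in one stroke, and it sidesteps the compactness/equi-coercivity step entirely. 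All the ingredients you invoke are standard and correctly applied (exactness of the convolution is in fact not even needed for the conjugacy identity $(g\,\square\,h)^*=g^*+h^*$). The one caveat: in the Rockafellar--Wets development, Wijsman's theorem (their Theorem 11.34) is itself proved \emph{using} Theorem 7.37 together with the Moreau identity $e_\lambda f(x)+e_{1/\lambda}f^*(x/\lambda)=\tfrac1{2\lambda}\|x\|^2$, so if you present your argument as a proof of 7.37 you should cite an independent proof of the conjugacy duality (Wijsman's original, or Attouch's via graphical convergence of subdifferentials) to avoid circularity within that source. That is a matter of which results one takes as primitive, not a mathematical gap.
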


\begin{Fact}\emph{\cite[Lemma 2.2]{proxmap}}\label{affineshift}
Let $f:\R^n\rightarrow\overline{\R}$ be proper lsc, and $g(x)=f(x)-a^\top x$ for some $a\in\R^n.$ Then
$$e_rg(x)=e_rf\left(x+\frac{a}{r}\right)-a^\top x-\frac{1}{2r}a^\top a.$$
\end{Fact}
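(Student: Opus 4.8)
The plan is to prove the identity directly from Definition~\ref{df:Moreau} by completing the square in the infimization variable. Writing out $e_rg(x)=\inf_{y\in\R^n}\bigl\{f(y)-a^\top y+\tfrac{r}{2}\|y-x\|^2\bigr\}$, the only terms touching $y$ beyond $f(y)$ are the linear term $-a^\top y$ and the quadratic term $\tfrac r2\|y-x\|^2$. I would combine these two into a single shifted quadratic $\tfrac r2\|y-(x+\tfrac ar)\|^2$ plus a remainder that is independent of $y$, and then pull that remainder out of the infimum.

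Concretely, the steps are: (1) expand $\tfrac r2\|y-x\|^2=\tfrac r2\|y\|^2-r\langle x,y\rangle+\tfrac r2\|x\|^2$ and merge the parts linear in $y$ to obtain $-\langle rx+a,\,y\rangle$; (2) recognize $\tfrac r2\|y\|^2-\langle rx+a,\,y\rangle=\tfrac r2\bigl\|y-(x+\tfrac ar)\bigr\|^2-\tfrac r2\bigl\|x+\tfrac ar\bigr\|^2$; (3) collect the $y$-free remainder $\tfrac r2\|x\|^2-\tfrac r2\|x+\tfrac ar\|^2$ and simplify it, using $\|x+\tfrac ar\|^2=\|x\|^2+\tfrac2r a^\top x+\tfrac1{r^2}a^\top a$, to exactly $-a^\top x-\tfrac1{2r}a^\top a$; (4) since this remainder does not depend on $y$, factor it out of the infimum, leaving $\inf_{y}\bigl\{f(y)+\tfrac r2\|y-(x+\tfrac ar)\|^2\bigr\}=e_rf(x+\tfrac ar)$, which gives the claimed formula.

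There is no serious obstacle here; the one point deserving a sentence of care is that $f$ is extended-real-valued, so the infimum defining $e_rf$ may a priori be $-\infty$. This causes no problem: the quantity pulled out, $-a^\top x-\tfrac1{2r}a^\top a$, is a finite real number, so the arithmetic in $\overline{\R}$ is unambiguous and the identity $\inf_y\{h(y)+c\}=\bigl(\inf_y h(y)\bigr)+c$ holds verbatim. Properness and lower semicontinuity of $f$ are not used in the algebraic manipulation itself; they are merely the standing assumptions guaranteeing that $e_rg$ and $e_rf$ are the well-defined objects of Definition~\ref{df:Moreau}. An alternative route through Fenchel conjugates and the infimal-convolution description of $e_r$ is available, but the completion-of-square argument above is the most economical and requires none of the earlier facts in the excerpt.
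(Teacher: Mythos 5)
Your completion-of-square argument is correct: the remainder $\tfrac r2\|x\|^2-\tfrac r2\|x+\tfrac ar\|^2$ does simplify to $-a^\top x-\tfrac1{2r}a^\top a$, and pulling this $y$-free constant out of the infimum gives exactly the stated identity. The paper itself offers no proof, citing the result from an external reference, and your computation is precisely the standard argument behind that cited lemma, so there is nothing further to reconcile.
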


\begin{lem}\label{evenlem}
Let $f:\R^n\rightarrow\ORR$ be an even function. Then $e_rf$ is an even function.
\end{lem}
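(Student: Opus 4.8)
The plan is to show directly from the definition of the Moreau envelope that $e_rf(-x) = e_rf(x)$ for all $x \in \R^n$, using the fact that $f(-y) = f(y)$ for all $y$ and that the quadratic kernel $q_r(y) = \frac{r}{2}\|y-x\|^2$ transforms predictably under the substitution $y \mapsto -y$. First I would fix $x \in \R^n$ and write out $e_rf(-x) = \inf_{y \in \R^n}\{f(y) + \frac{r}{2}\|y-(-x)\|^2\} = \inf_{y \in \R^n}\{f(y) + \frac{r}{2}\|y+x\|^2\}$.

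Next I would perform the change of variable $y = -z$ in the infimum. Since the map $z \mapsto -z$ is a bijection of $\R^n$ onto itself, the infimum over $y \in \R^n$ equals the infimum over $z \in \R^n$ of the same expression with $y$ replaced by $-z$. This gives $e_rf(-x) = \inf_{z \in \R^n}\{f(-z) + \frac{r}{2}\|{-z}+x\|^2\}$. Then I would invoke evenness of $f$ to replace $f(-z)$ by $f(z)$, and use $\|{-z}+x\| = \|z-x\|$ (norm invariance under negation) to rewrite the kernel, obtaining $e_rf(-x) = \inf_{z \in \R^n}\{f(z) + \frac{r}{2}\|z-x\|^2\} = e_rf(x)$. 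Since $x$ was arbitrary, $e_rf$ is even.

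There is essentially no obstacle here: the argument is a one-line change of variables, and the only facts used are that negation is a bijection of $\R^n$, that the Euclidean norm satisfies $\|{-v}\| = \|v\|$, and that $f$ is even by hypothesis. One minor point worth a remark is that properness and lower semicontinuity of $f$ (required for $e_rf$ to be well-defined in the sense of Definition \ref{df:Moreau}) are inherited trivially, and the identity holds whether or not the infimum is attained, so no appeal to the proximal mapping is needed. If desired, one could also phrase the proof more abstractly: $e_rf = f \,\square\, q_r$ (infimal convolution), both $f$ and $q_r(\cdot) = \frac{r}{2}\|\cdot\|^2$ are even, and infimal convolution of even functions is even — but the direct computation is cleaner and self-contained.
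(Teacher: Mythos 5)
Your proposal is correct and follows essentially the same route as the paper's own proof: a direct change of variables $y\mapsto -z$ in the infimum, followed by evenness of $f$ and the invariance $\|-z+x\|=\|z-x\|$. No issues to flag.
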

\begin{proof} Let $f(-x)=f(x).$ Then
\begin{align*}
(e_rf)(-x)&=\inf\limits_{z\in\R^n}\left\{f(z)+\frac{r}{2}\|z-(-x)\|^2\right\}.
\end{align*}
Let $z=-y.$ Then
\begin{align*}
(e_rf)(-x)&=\inf\limits_{-y\in\R^n}\left\{f(-y)+\frac{r}{2}\|-y-(-x)\|^2\right\}\\
&=\inf\limits_{-y\in\R^n}\left\{f(y)+\frac{r}{2}\|y-x\|^2\right\}\\
&=\inf\limits_{y\in\R^n}\left\{f(y)+\frac{r}{2}\|y-x\|^2\right\}\\
&=(e_rf)(x).\qedhere
\end{align*}
\end{proof}

%%%%%
\section{Properties of the Moreau envelope of convex functions}\label{sec:props}
%%%%%

In this section, we present results on bounds and differentiability, and follow up with characterizations that involve strict convexity, strong convexity and Lipschitz continuity. These results are the setup for the two sections that follow, where we explore more specific families of functions.

%%%
\subsection{The set of convex Moreau envelopes}
%%%

We begin by providing several properties of Moreau envelopes of proper, lsc, convex functions. We show that the set of all such envelopes is closed and convex, and we give a bound for $f-e_rf$ when $f$ is Lipschitz continuous. The facts in this section are already known in the literature, but they are scattered among several articles and books, so it is convenient to have them all in one collection.
\begin{Fact}\emph{\cite[Theorem 3.18]{wangklee}} Let $f:\R^n\rightarrow\ORR$ be proper and lsc. Then $e_rf=f$ for some $r>0$ if and only if $f$ is a constant function.
\end{Fact}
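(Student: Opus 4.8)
The plan is to prove both implications directly from the definition of the Moreau envelope. The implication that a constant function is its own Moreau envelope is immediate: if $f\equiv c$, then for every $x$ the infimum defining $e_rf(x)$ is attained at $y=x$ and equals $c$, so $e_rf=f$ for every $r>0$.

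For the converse, suppose $e_rf=f$ for some fixed $r>0$. First I would check that $f$ is real-valued. Since $f$ is proper there is $y_0$ with $f(y_0)<\infty$, so $e_rf(x)\le f(y_0)+\frac r2\|y_0-x\|^2<\infty$ for every $x$, which forces $f(x)=e_rf(x)<\infty$; and $f>-\infty$ because $f$ is proper. Next, the identity $e_rf=f$ is equivalent to a pointwise inequality: taking $y=x$ in the infimum always gives $e_rf(x)\le f(x)$, so $e_rf=f$ holds precisely when $f(x)\le f(y)+\frac r2\|y-x\|^2$ for all $x,y\in\R^n$, i.e. $f(x)-f(y)\le\frac r2\|x-y\|^2$. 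Interchanging the roles of $x$ and $y$ then yields $|f(x)-f(y)|\le\frac r2\|x-y\|^2$ for all $x,y\in\R^n$.

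The final step is to deduce from this quadratic estimate that $f$ is constant. Fixing $x\neq y$ and $N\in\N$, I would split the segment $[x,y]$ into $N$ equal pieces with nodes $x_k=x+\frac kN(y-x)$, $k=0,\dots,N$, and combine the estimate on each piece with the triangle inequality to get $|f(x)-f(y)|\le\sum_{k=0}^{N-1}|f(x_k)-f(x_{k+1})|\le N\cdot\frac r2\cdot\frac{\|x-y\|^2}{N^2}=\frac{r\|x-y\|^2}{2N}$. Letting $N\to\infty$ gives $f(x)=f(y)$, and since $x,y$ were arbitrary, $f$ is constant. (Equivalently, the estimate shows directly that $f$ is differentiable everywhere with $\nabla f\equiv 0$, hence constant on the connected set $\R^n$.)

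There is no serious obstacle here; the only points requiring a little care are the bookkeeping that rules out the values $\pm\infty$ for $f$ under the hypothesis $e_rf=f$, and the observation that $e_rf=f$ is genuinely equivalent to the pointwise inequality above, so that no convexity of $f$ is invoked anywhere in the argument.
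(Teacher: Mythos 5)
Your argument is correct. Note that the paper itself gives no proof of this statement: it is quoted as a Fact from the cited reference, so there is no in-paper argument to compare against; your proposal stands as a self-contained justification. Both directions are handled properly: the constant case is immediate, and for the converse you correctly reduce $e_rf=f$ (after checking finiteness from properness) to the pointwise estimate $|f(x)-f(y)|\leq\frac{r}{2}\|x-y\|^2$, and the chopping-the-segment argument (or, equivalently, the observation that this bound forces $\nabla f\equiv 0$) is a standard and valid way to conclude that a function satisfying a quadratic, hence superlinear, modulus of continuity is constant on $\R^n$. A nice feature of your route is that it uses neither convexity nor Fenchel conjugacy, nor even lower semicontinuity beyond what is stated, so it proves the equivalence for any proper function with $e_rf=f$ for a single $r>0$; the only bookkeeping points, which you address, are ruling out the values $\pm\infty$ and noting that the infimum identity is genuinely equivalent to the two-point inequality.
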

\begin{Fact}\emph{\cite[Theorem 3.1]{planwang2016}}\label{prop:1}
The set $e_r(\Gamma_0(\R^n))$ is a convex set in $\Gamma_0(\R^n).$
\end{Fact}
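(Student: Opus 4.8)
The plan is to reduce the statement to a conjugacy characterization of Moreau envelopes and then verify convexity by an explicit computation. Recall from the introduction that $e_rf$ is the infimal convolution of $f$ with $q_r=\tfrac r2\|\cdot\|^2$; by the standard duality for infimal convolution this gives $(e_rf)^*=f^*+q_r^*=f^*+\tfrac1{2r}\|\cdot\|^2$, and since $f^*+\tfrac1{2r}\|\cdot\|^2\in\Gamma_0(\R^n)$ we may biconjugate to get $e_rf=\bigl(f^*+\tfrac1{2r}\|\cdot\|^2\bigr)^*$. Read backwards, this yields the characterization I would use: $g\in e_r(\Gamma_0(\R^n))$ if and only if $g\in\Gamma_0(\R^n)$ and $g^*-\tfrac1{2r}\|\cdot\|^2\in\Gamma_0(\R^n)$. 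Indeed, if the latter holds then $f:=\bigl(g^*-\tfrac1{2r}\|\cdot\|^2\bigr)^*\in\Gamma_0(\R^n)$ satisfies $(e_rf)^*=f^*+\tfrac1{2r}\|\cdot\|^2=g^*$, hence $e_rf=g$.

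Next I would take $g_0=e_rf_0$ and $g_1=e_rf_1$ with $f_0,f_1\in\Gamma_0(\R^n)$, fix $\lambda\in(0,1)$ (the cases $\lambda\in\{0,1\}$ being trivial), and set $\bar g:=\lambda g_0+(1-\lambda)g_1$. Since Moreau envelopes of functions in $\Gamma_0(\R^n)$ are real-valued (each such $f$ has an affine minorant and $q_r$ is coercive), $\bar g$ is a finite-valued convex function, so $\bar g\in\Gamma_0(\R^n)$, and by the characterization it remains only to prove $\bar g^*-\tfrac1{2r}\|\cdot\|^2\in\Gamma_0(\R^n)$. Properness and lower semicontinuity are immediate, because $\bar g^*$ is proper and lsc as a conjugate while $\tfrac1{2r}\|\cdot\|^2$ is finite and continuous. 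For convexity, I would expand $\bar g^*$ with the conjugate-of-a-sum formula for finite-valued convex functions, $\bar g^*(z)=\inf_{y\in\R^n}\bigl[\lambda g_0^*(y/\lambda)+(1-\lambda)g_1^*\bigl(\tfrac{z-y}{1-\lambda}\bigr)\bigr]$, substitute $g_i^*=f_i^*+\tfrac1{2r}\|\cdot\|^2$, and split off the quadratic terms via the identity $\tfrac1\lambda\|y\|^2+\tfrac1{1-\lambda}\|z-y\|^2=\tfrac1{\lambda(1-\lambda)}\|y-\lambda z\|^2+\|z\|^2$. This cancels exactly $\tfrac1{2r}\|z\|^2$ and leaves $\bar g^*(z)-\tfrac1{2r}\|z\|^2=\inf_{y\in\R^n}F(z,y)$, where $F(z,y)=\lambda f_0^*(y/\lambda)+(1-\lambda)f_1^*\bigl(\tfrac{z-y}{1-\lambda}\bigr)+\tfrac1{2r\lambda(1-\lambda)}\|y-\lambda z\|^2$ is jointly convex in $(z,y)$, being a sum of convex functions composed with linear maps. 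Since partial minimization of a jointly convex function is convex, $\bar g^*-\tfrac1{2r}\|\cdot\|^2$ is convex, hence in $\Gamma_0(\R^n)$, and the characterization gives $\bar g\in e_r(\Gamma_0(\R^n))$.

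The hard part is precisely the convexity of $\bar g^*-\tfrac1{2r}\|\cdot\|^2$: conjugation does not commute with convex combinations, and it is only after inserting the explicit form $g_i^*=f_i^*+\tfrac1{2r}\|\cdot\|^2$ and completing the square that the obstructing quadratic is annihilated, a cancellation special to the quadratic kernel $q_r$. As an alternative, if one grants the duality between $\sigma$-strong convexity of a function in $\Gamma_0(\R^n)$ and $\mathcal{C}^1$-smoothness of its conjugate with $\tfrac1\sigma$-Lipschitz gradient (developed later in this section), the same characterization identifies $e_r(\Gamma_0(\R^n))$ with the set of convex functions on $\R^n$ that are differentiable with $r$-Lipschitz gradient; convexity of that set is then immediate, since a convex combination of maps with $r$-Lipschitz gradients again has $r$-Lipschitz gradient.
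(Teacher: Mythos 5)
The paper imports this statement as a Fact from \cite[Theorem 3.1]{planwang2016} and supplies no proof of its own, so there is no internal argument to compare against; judged on its own terms, your proof is correct. The characterization you set up, namely $g\in e_r(\Gamma_0(\R^n))$ if and only if $g\in\Gamma_0(\R^n)$ and $g^*-\tfrac{1}{2r}\|\cdot\|^2\in\Gamma_0(\R^n)$, is precisely Theorem \ref{strongcon} later in this section (its proof does not use the present Fact, so there is no circularity, though you are quoting forward). The subsequent computation checks out: the exact conjugate-of-a-sum formula is legitimate because $g_0,g_1$ are finite-valued convex, the identity $\tfrac1\lambda\|y\|^2+\tfrac1{1-\lambda}\|z-y\|^2=\tfrac1{\lambda(1-\lambda)}\|y-\lambda z\|^2+\|z\|^2$ is verified by expansion, and partial minimization of the jointly convex $F$ gives convexity of $\bar g^*-\tfrac{1}{2r}\|\cdot\|^2$, with properness and lower semicontinuity handled as you say. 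Your closing alternative via Fact \ref{1lip} --- $e_r(\Gamma_0(\R^n))$ is exactly the set of convex functions with $r$-Lipschitz gradient, and that property passes trivially to convex combinations --- is the shorter and more standard route, and is presumably essentially the cited proof. What your main argument buys is self-containment: it needs only Fenchel conjugation and completion of the square, not the Baillon--Haddad-type equivalence underlying Fact \ref{1lip}; the price is the explicit cancellation of the obstructing quadratic, which, as you note, is special to the kernel $q_r$.
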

%\begin{proof} Let $f_1,f_2\in\Gamma_0(\R^n),$ $\lambda\in[0,1].$ Then $e_rf_1,e_rf_2\in e_r(\Gamma_0(\R^n)).$ We need to show that $\lambda e_rf_1+(1-\lambda)e_rf_2\in e_r(\Gamma_0(\R^n)).$ By \cite[Theorem 6.2]{proxbas} with $\mu=r$ and $n=2,$ we have that $\lambda e_rf_1+(1-\lambda)e_rf_2$ is the Moreau envelope of the proximal average function $PA_r(f,\lambda).$ By \cite[Corollary 5.2]{proxbas}, we have that $PA_r(f,\lambda)\in\Gamma_0(\R^n).$ Hence, $e_rPA_r(f,\lambda)\in e_r(\Gamma_0(\R^n)),$ and we conclude that $e_r(\Gamma_0(\R^n))$ is a convex set.
%\end{proof}
%
\begin{Fact}\emph{\cite[Theorem 3.2]{planwang2016}}
The set $e_r(\Gamma_0(\R^n))$ is closed under pointwise convergence.
\end{Fact}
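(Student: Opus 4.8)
The goal is to show that $e_r(\Gamma_0(\R^n))$ is closed under pointwise convergence: if $g^\nu = e_rf^\nu$ with $f^\nu\in\Gamma_0(\R^n)$ and $g^\nu\overset{p}\to g$, then $g\in e_r(\Gamma_0(\R^n))$, i.e. there is some $f\in\Gamma_0(\R^n)$ with $g = e_rf$.

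The plan is to exploit Fact~\ref{fact:epi}, which says that epi-convergence of the $f^\nu$ is equivalent to pointwise convergence of the $e_rf^\nu$. The subtlety is that a pointwise-convergent sequence of Moreau envelopes need not obviously come from an epi-convergent sequence of the originating functions, so I would first pass to the Fenchel-conjugate side where the envelope operation is transparent. Recall that $e_rf = (f^* + q_{1/r})^*$ where $q_{1/r} = \frac{1}{2r}\|\cdot\|^2$; equivalently $(e_rf)^* = f^* + \frac{1}{2r}\|\cdot\|^2$. Hence $g = e_rf^\nu$ being a candidate envelope is equivalent to $g^*$ being of the form $h + \frac{1}{2r}\|\cdot\|^2$ with $h = (f^\nu)^*\in\Gamma_0(\R^n)$. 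So the claim reduces to: the set $\{h + \frac{1}{2r}\|\cdot\|^2 : h\in\Gamma_0(\R^n)\}$ is closed under the dual of pointwise convergence — and since conjugation is a bijection on $\Gamma_0(\R^n)$ exchanging pointwise/epi-limits appropriately, closedness here is essentially automatic: if $g^\nu\overset{p}\to g$ then (on the conjugate side) the $(g^\nu)^* = (f^\nu)^* + \frac{1}{2r}\|\cdot\|^2$ epi-converge, their limit is $g^*$, and subtracting the fixed smooth strongly convex quadratic $\frac{1}{2r}\|\cdot\|^2$ from $g^*$ yields a limit function $h$ which is again in $\Gamma_0(\R^n)$; then $f := h^*\in\Gamma_0(\R^n)$ satisfies $e_rf = g$.

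Concretely the steps are: (1) invoke the representation $(e_rf)^* = f^* + q_{1/r}$ for $f\in\Gamma_0(\R^n)$, valid since the infimal convolution $f\,\square\,q_r$ is exact and the conjugate of an infimal convolution is the sum of conjugates; (2) given $g^\nu = e_rf^\nu \overset{p}\to g$, note $g\in\Gamma_0(\R^n)$ because it is a pointwise limit of finite convex functions that are bounded below (each $e_rf^\nu \geq \inf f^\nu$; care is needed if $\inf f^\nu \to -\infty$, but then $g^\nu$ would diverge, contradicting pointwise convergence to a finite $g$ — I should check this carefully), and because $e_rf$ is everywhere finite and convex, $g$ inherits finiteness and convexity, hence lies in $\Gamma_0(\R^n)$ and even is continuous; (3) by Fact~\ref{fact:epi} applied with the roles reversed, since $g^\nu\overset{p}\to g$ with all functions in $\Gamma_0(\R^n)$, we get that the $f^\nu$ epi-converge to some $f\in\Gamma_0(\R^n)$ — but wait, Fact~\ref{fact:epi} presupposes we already have a target $f$; so instead I would argue directly: set $h^\nu := (g^\nu)^* - q_{1/r} = (f^\nu)^*$, observe $g^\nu\overset{p}\to g \Rightarrow (g^\nu)^*\overset{e}\to g^*$ (pointwise convergence of finite convex functions implies epi-convergence of conjugates, by Wijsman/Attouch–Wets type results), then $h^\nu\overset{e}\to g^* - q_{1/r} =: h$ since subtracting a fixed continuous function is continuous w.r.t.\ epi-convergence, and $h\in\Gamma_0(\R^n)$ because $g^*\in\Gamma_0$ and $g^* - q_{1/r}$ is lsc, proper (it inherits properness since $g^*$ is proper and $q_{1/r}$ finite), and — crucially — convex, which holds precisely because $g^*$, being a conjugate of the everywhere-finite convex $g$, has $q_{1/r}$-monotone-enough curvature; actually this last point needs that $g = e_r(\text{something})$, so I would instead verify convexity of $g^* - q_{1/r}$ by noting $h^\nu = (f^\nu)^*$ are convex and $h^\nu\overset{e}\to h$, so $h$ is convex; (4) finally put $f := h^*\in\Gamma_0(\R^n)$; then $f^* = h$ (biconjugation), so $(e_rf)^* = f^* + q_{1/r} = h + q_{1/r} = g^*$, whence $e_rf = g$.

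The main obstacle is step (3): making rigorous the claim that pointwise convergence of the $g^\nu$ forces the conjugates $(g^\nu)^*$ to epi-converge to $g^*$, and controlling properness/lower-boundedness along the way (the functions $f^\nu$ could a priori have $\inf f^\nu\to -\infty$ or the conjugates could degenerate). This is where one must lean on a genuine convergence theorem — the Attouch–Wets / Wijsman theorem that on $\Gamma_0(\R^n)$ conjugation is a homeomorphism for epi-convergence, together with the fact that pointwise and epi-convergence coincide for convex functions that are locally uniformly bounded (which holds here since the $g^\nu$ are finite convex and pointwise convergent, hence locally uniformly bounded by a standard equi-continuity argument for convex functions). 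Once that machinery is in place, everything else is bookkeeping with the identity $(e_rf)^* = f^* + q_{1/r}$. I would also double check that this argument is essentially the one in \cite{planwang2016} Theorem~3.2 and cite it rather than reprove, but the conjugate-duality sketch above is the natural self-contained route.
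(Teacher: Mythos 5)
Note first that the paper offers no proof of this statement: it is imported as a Fact with the proof deferred to the cited reference, so your argument cannot be compared line-by-line with an in-paper proof and must be judged on its own. On those terms it is correct, and the route is a reasonable self-contained one. The essential chain works: each $g^\nu=e_rf^\nu$ is finite and convex, so a real-valued pointwise limit $g$ is finite convex, and pointwise convergence of finite convex functions gives uniform convergence on compact sets and hence $g^\nu\overset{e}\to g$; Wijsman's theorem (bicontinuity of conjugation for epi-convergence on $\Gamma_0(\R^n)$) gives $(g^\nu)^*\overset{e}\to g^*$; since $(g^\nu)^*=(f^\nu)^*+\frac{1}{2r}\|\cdot\|^2$ and subtracting a fixed finite continuous quadratic preserves epi-limits, $(f^\nu)^*\overset{e}\to h:=g^*-\frac{1}{2r}\|\cdot\|^2$, which is convex as an epi-limit of convex functions, lsc, and proper because $g^*$ is proper and nowhere $-\infty$; setting $f:=h^*\in\Gamma_0(\R^n)$ then gives $(e_rf)^*=h+\frac{1}{2r}\|\cdot\|^2=g^*$ and $e_rf=g$ by biconjugation. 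Your instinct to obtain convexity of $g^*-\frac{1}{2r}\|\cdot\|^2$ from the epi-limit of the convex $(f^\nu)^*$, rather than from some curvature property of $g^*$, is exactly the right fix for the wobble in your step (3). Two points to tighten: the statement is only meaningful when the pointwise limit is real-valued (e.g.\ $f^\nu\equiv-\nu$ gives $e_rf^\nu\to-\infty$), so this should be said explicitly rather than left as the parenthetical worry about $\inf f^\nu\to-\infty$; and the two convergence theorems you lean on (pointwise implies epi for finite convex functions; conjugation preserves epi-convergence on $\Gamma_0(\R^n)$) need precise citations, since they carry the whole proof. Finally, a noticeably shorter route is available from results already stated in this paper: by Fact~\ref{1lip}, a finite convex $g$ lies in $e_r(\Gamma_0(\R^n))$ exactly when $\nabla g$ is $r$-Lipschitz, which for convex $g$ is equivalent to convexity of $\frac{r}{2}\|\cdot\|^2-g$ (differentiability of $g$ then comes for free); convexity of $g$ and of $\frac{r}{2}\|\cdot\|^2-g$ are both manifestly preserved under pointwise limits, so closedness follows with no conjugation or epi-convergence machinery at all. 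That argument, or the strong-convexity characterization of Theorem~\ref{strongcon} applied to the conjugates, is likely closer in spirit to the proof in the cited reference.
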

\begin{prop}\label{thm:lipbound}
Let $f\in\Gamma_0(\R^n)$ be $L$-Lipschitz. Then for all $x\in\dom f$ and any $r>0,$$$0\leq f(x)-e_rf(x)\leq\frac{L^2}{2r}.$$
\end{prop}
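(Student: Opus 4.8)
The plan is to handle the two inequalities separately, with the left-hand one being immediate and the right-hand one reducing to a one-variable optimization.

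For the lower bound $0 \le f(x) - e_r f(x)$, I would simply invoke Fact \ref{fact:egoestoinff}(i), which gives $e_r f(x) \le f(x)$ for every $x \in \R^n$ whenever $f \in \Gamma_0(\R^n)$; rearranging yields the claim, and no Lipschitz hypothesis is needed here.

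For the upper bound, I would start from the definition of the Moreau envelope and rewrite
\[
f(x) - e_r f(x) = f(x) - \inf_{y\in\R^n}\left\{ f(y) + \frac{r}{2}\|y - x\|^2 \right\} = \sup_{y\in\R^n}\left\{ f(x) - f(y) - \frac{r}{2}\|y - x\|^2 \right\}.
\]
Applying the $L$-Lipschitz estimate $f(x) - f(y) \le L\|x - y\|$ (valid since $x, y$ range over $\dom f = \R^n$, as an $L$-Lipschitz function in $\Gamma_0(\R^n)$ is finite everywhere; if one only wants $x \in \dom f$ the same bound still holds) gives
\[
f(x) - e_r f(x) \le \sup_{y\in\R^n}\left\{ L\|x - y\| - \frac{r}{2}\|x - y\|^2 \right\} \le \sup_{t\ge 0}\left\{ Lt - \frac{r}{2}t^2 \right\},
\]
where I substitute $t = \|x - y\|$ and note every nonnegative $t$ is attained by some $y$.

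Finally I would maximize the scalar concave quadratic $t \mapsto Lt - \tfrac{r}{2}t^2$ over $t \ge 0$: its derivative vanishes at $t = L/r > 0$, producing the maximum value $\tfrac{L^2}{r} - \tfrac{r}{2}\cdot\tfrac{L^2}{r^2} = \tfrac{L^2}{2r}$. Combining the two bounds completes the proof. There is no real obstacle here; the only point requiring a word of care is the reduction from the supremum over $y \in \R^n$ to the supremum over $t \ge 0$, which is justified precisely because the integrand depends on $y$ only through $\|x - y\|$ and that quantity sweeps out all of $[0,\infty)$.
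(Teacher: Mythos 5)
Your proposal is correct and follows essentially the same route as the paper: the lower bound comes from the standard estimate $e_rf\leq f$, and the upper bound is obtained by applying the $L$-Lipschitz estimate and maximizing the concave quadratic $t\mapsto Lt-\tfrac{r}{2}t^2$, yielding $\tfrac{L^2}{2r}$. The only cosmetic difference is that the paper evaluates the envelope at the proximal point $P_rf(x)$ while you take a supremum over all $y$ (thereby avoiding any appeal to attainment), which changes nothing of substance.
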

\begin{proof}
We have $0\leq f(x)-e_rf(x)~\forall x\in\dom f$ by \cite[Theorem 1.25]{rockwets}. Then
\begin{align*}
f(x)-e_rf(x)&=f(x)-\left[f(P_rf(x))+\frac{r}{2}\|x-P_rf(x)\|^2\right]\\
&\leq L\|x-P_rf(x)\|-\frac{r}{2}\|x-P_rf(x)\|^2\\
&=Lt-\frac{r}{2}t^2,
\end{align*}where $t=\|x-P_rf(x)\|.$ This is a concave quadratic function whose maximizer is $L/r.$ Thus,
\begin{align*}
Lt-\frac{r}{2}t^2&\leq L\frac{L}{r}-\frac{r}{2}\left(\frac{L}{r}\right)^2\\
&=\frac{L^2}{2r}.\qedhere
\end{align*}
\end{proof}
\noindent The following example demonstrates that for an affine function, the bound in Proposition \ref{thm:lipbound} is tight.
\begin{ex}\label{ex:affinetight}
Let $f:\R^n\to\ORR,$ $f(x)=\langle a,x\rangle+b,$ $a\in\R^n,$ $b\in\R.$ Then $$f-e_rf=\frac{\|a\|^2}{2r}.$$
\end{ex}
\begin{proof}
We have
$$e_rf(x)=\inf\limits_{y\in\R^n}\left\{\langle a,y\rangle+b+\frac{r}{2}\|y-x\|^2\right\}=\inf\limits_{y\in\R^n}g(y).$$
Setting $g'(y)=0$ to find critical points yields $y=x-a/r.$
Substituting into $g(y),$ we have
$$e_rf(x)=\left\langle a,x-\frac{a}{r}\right\rangle+\frac{r}{2}\left(x-\frac{a}{r}-x\right)^2=\langle a,x\rangle+b-\frac{\|a\|^2}{2r}.$$
Thus,$$f-e_rf=\frac{\|a\|^2}{2r},$$where $\|a\|$ is the Lipschitz constant of the affine function $f.$
\end{proof}
\noindent The next theorem is a characterization of when a convex function and its Moreau envelope differ only by a constant: when the function is affine.
\begin{thm}
Let $f\in\Gamma_0(\R^n),$ $r>0.$ Then $f=e_rf+c$ for some $c\in\R$ if and only if $f$ is an affine function.
\end{thm}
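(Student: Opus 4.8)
The plan is to prove both implications. The ``if'' direction is essentially Example \ref{ex:affinetight}: if $f(x)=\langle a,x\rangle + b$ is affine, then that example computes $f-e_rf = \|a\|^2/(2r)$, a constant, so $f = e_rf + c$ with $c = \|a\|^2/(2r)$. This direction requires no new work.

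For the ``only if'' direction, suppose $f = e_rf + c$ for some $c\in\R$. The key observation is that $e_rf$ is always differentiable (indeed $\mathcal{C}^1$) for $f\in\Gamma_0(\R^n)$, with $\nabla e_rf(x) = r(x - P_rf(x))$; this is standard (e.g.\ \cite{rockwets}) and is used freely elsewhere in the paper. Hence $f = e_rf + c$ forces $f$ itself to be differentiable everywhere on $\R^n$, so $\dom f = \R^n$ and $f\in\mathcal{C}^1$. Now I would iterate the hypothesis: applying $e_r$ to both sides and using $e_r(g+c) = e_rg + c$ together with Fact \ref{fact:egoestoinff}(i), one gets $e_r(e_rf) = e_rf - c$, hence $e_r^{(k)} f = f - kc$ for every $k\in\N$ (here $e_r^{(k)}$ denotes the $k$-fold composition). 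But Fact \ref{fact:egoestoinff}(i) gives $\inf f \le e_r^{(k)}f(x) \le f(x)$ for all $x$; combined with $e_r^{(k)}f = f - kc$ this yields $\inf f \le f(x) - kc \le f(x)$ for all $k$, which forces $c = 0$ (if $c>0$ the left inequality fails for large $k$ when $x\in\dom f$; if $c<0$ the right inequality fails). Once $c=0$ we have $f = e_rf$, and by the cited result \cite[Theorem 3.18]{wangklee} (quoted just above as a Fact) this means $f$ is constant — in particular affine.

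Alternatively, and perhaps more cleanly, once we know $f\in\mathcal{C}^1$ and $c=0$ we can avoid invoking the constant-function Fact and instead argue directly: $f = e_rf$ means $f(x) = f(P_rf(x)) + \frac r2\|x - P_rf(x)\|^2 \le f(x)$ for all $x$, with equality, so by uniqueness of the prox point (strong convexity of the inner objective) we must have $P_rf(x) = x$ for all $x$, whence $\nabla f(x) = r(x - P_rf(x)) = 0$ everywhere and $f$ is constant. Either route finishes the proof.

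The main obstacle is the ``only if'' direction, and specifically showing $c = 0$: a priori a convex function could differ from its Moreau envelope by a positive constant, and ruling this out requires the iteration argument above (or an equivalent use of the sandwich $\inf f \le e_rf \le f$). The differentiability step is routine given the standard smoothing property of $e_r$, and the reduction to ``$f$ is constant'' is then immediate from the already-quoted Fact.
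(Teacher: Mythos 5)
Your ``$\Leftarrow$'' direction is exactly the paper's (it simply cites Example \ref{ex:affinetight}), but your ``$\Rightarrow$'' direction breaks down at the step ``which forces $c=0$.'' The sandwich $\inf f\leq e_r^{(k)}f(x)\leq f(x)$ combined with $e_r^{(k)}f=f-kc$ only produces a contradiction for $c>0$ if $\inf f>-\infty$, and nothing in the hypothesis rules out $\inf f=-\infty$. In fact the conclusion of the theorem itself supplies counterexamples to your claim: for $f(x)=\langle a,x\rangle$ with $a\neq0$ one has $f=e_rf+c$ with $c=\|a\|^2/(2r)>0$ (this is precisely Example \ref{ex:affinetight}), $\inf f=-\infty$, and $e_r^{(k)}f=f-kc$ holds for every $k$ with no contradiction whatsoever. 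So your argument, if valid, would prove that any $f$ satisfying $f=e_rf+c$ is constant, which contradicts the ``$\Leftarrow$'' direction you yourself accept; the ``alternative'' ending suffers the same defect, since it also presupposes $c=0$. (Your preliminary observation that $f$ must be finite-valued and $\mathcal{C}^1$ is correct, but it is not used in any essential way and does not salvage the argument.)

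The paper's route avoids this issue entirely: taking Fenchel conjugates of $f=e_rf+c$ gives $f^*+\frac{1}{2r}\|\cdot\|^2=f^*+c$ on $\dom f^*$, i.e.\ $\frac{1}{2r}\|x\|^2=c$ for every $x\in\dom f^*$; a short argument (differentiating along the segment joining two putative points of $\dom f^*$) shows that $\dom f^*$ must be a single point $x_0$, whence $f^*=\iota_{\{x_0\}}+k$ and $f(x)=\langle x,x_0\rangle-k$ is affine. To repair your proof you would need an argument of this conjugate-domain type, or some other mechanism that remains meaningful when $\inf f=-\infty$; the iteration identity $e_r^{(k)}f=f-kc$ by itself cannot distinguish nonconstant affine functions (which are allowed, with $c>0$) from anything forbidden.
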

\begin{proof}
$(\Leftarrow)$ This is the result of Example \ref{ex:affinetight}.

\noindent$(\Rightarrow)$ Suppose that $f=e_rf+c.$ Taking the Fenchel conjugate of both sides and rearranging, we have
\begin{equation}\label{eq:affineenvelope}
f^*+\frac{1}{2r}\|\cdot\|^2=f^*+c.
\end{equation}
Let $x_0$ be such that $f^*(x_0)<\infty.$ Then by \eqref{eq:affineenvelope} we have
\begin{align}
f^*(x_0)+\frac{1}{2r}\|x_0\|^2&=f^*(x_0)+c\nonumber\\
c&=\frac{1}{2r}\|x_0\|^2.\label{eq:cx0}
\end{align}
Now suppose there exists $x_1\neq x_0$ such that $f^*(x_1)<\infty.$ Then $\co\{x_0,x_1\}\subseteq\dom f^*,$ thus, $\frac{1}{2r}\|tx_0+(1-t)x_1\|^2=c$ for all $t\in[0,1].$ Substituting \eqref{eq:cx0}, we have
\begin{align}
t^2\|x_0\|^2+2t(1-t)\langle x_0,x_1\rangle+(1-t)^2\|x_1\|^2&=\|x_0\|^2\nonumber\\
(t^2-1)\|x_0\|^2-2t(t-1)\langle x_0,x_1\rangle+(t-1)^2\|x_1\|^2&=0\nonumber\\
(t+1)\|x_0\|^2-2t\langle x_0,x_1\rangle+(t-1)\|x_1\|^2&=0.\label{notequal0}
\end{align}
Note that if one of $x_0,x_1$ equals zero, then \eqref{notequal0} implies that the other one equals zero, a contradiction to $x_0\neq x_1.$ Hence, $x_0\neq0$ and $x_1\neq0.$ Since the left-hand side of \eqref{notequal0} is a smooth function of $t$ for $t\in(0,1),$ we take the derivative of \eqref{notequal0} with respect to $t$ and obtain
\begin{align*}
\|x_0\|^2-2\langle x_0,x_1\rangle+\|x_1\|^2&=0\\
\|x_0-x_1\|^2&=0\\
x_0&=x_1,
\end{align*}
a contradiction. Hence, $\dom f^*=\{x_0\}.$ Therefore, $f^*=\iota_{\{x_0\}}+k$ for some $k\in\R,$ and we have $f(x)=\langle x,x_0\rangle-k.$
\end{proof}

\subsection{Characterizations of the Moreau envelope}

Now we  show the ways in which $e_rf$ can be characterized in terms of $f$ when $f$ has a certain structure. We consider the properties of strict convexity, strong convexity, and Lipschitz continuity.
\begin{thm}\label{strictcon}
Let $f\in\Gamma_0(\R^n).$ Then $f$ is essentially strictly convex if and only if $e_rf$ is strictly convex.
\end{thm}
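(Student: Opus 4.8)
The plan is to exploit the well-known duality between the Moreau envelope and the Fenchel conjugate, namely that $(e_rf)^* = f^* + q_{1/r}$ where $q_{1/r} = \frac{1}{2r}\|\cdot\|^2$, together with the standard dictionary relating differentiability of a convex function to strict convexity of its conjugate, and essential smoothness to essential strict convexity. So I would first recall (citing Rockafellar--Wets or Bauschke--Combettes) that for $g \in \Gamma_0(\R^n)$, $g$ is essentially strictly convex if and only if $g^*$ is essentially smooth, and that $g$ is strictly convex (everywhere on its domain) iff $g^*$ is essentially smooth \emph{and} $\dom g^* = \R^n$ — actually the cleanest route is: $g$ strictly convex $\iff$ $g^*$ differentiable on $\intt(\dom g^*)$ with the extra domain condition, so I should be careful here and instead use the symmetric pair of equivalences at the level of subdifferentials.

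Concretely, the key steps would be: (1) Write $h := e_rf$. Since $f \in \Gamma_0(\R^n)$, $h \in \Gamma_0(\R^n)$ and $h^* = f^* + \frac{1}{2r}\|\cdot\|^2$, which is $\frac1r$-strongly convex; in particular $h^*$ is strictly convex on $\dom h^* = \dom f^*$. (2) Recall the equivalence: a function $g \in \Gamma_0(\R^n)$ is essentially strictly convex iff $g^*$ is essentially smooth, i.e.\ differentiable on $\intt(\dom g^*)$ with gradients blowing up at the boundary. (3) Show $e_rf$ strictly convex $\iff$ $(e_rf)^* = f^* + \frac{1}{2r}\|\cdot\|^2$ essentially smooth. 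Since adding the smooth, finite, everywhere-differentiable term $\frac{1}{2r}\|\cdot\|^2$ does not change where the sum is differentiable nor the boundary-blowup behavior (the quadratic has bounded gradient on bounded sets), $f^* + \frac{1}{2r}\|\cdot\|^2$ is essentially smooth $\iff$ $f^*$ is essentially smooth. (4) Apply the essential-strict-convexity/essential-smoothness duality once more: $f^*$ essentially smooth $\iff$ $f$ essentially strictly convex. Chaining these gives the result. I also need the reverse half of step (3): that $e_rf$ strictly convex is \emph{equivalent} to (not just implied by) its conjugate being essentially smooth — for this I would note that $e_rf$ is itself everywhere finite and differentiable (it is a Moreau envelope of a convex function, hence $\mathcal{C}^1$ with $\dom e_rf = \R^n$), so for $e_rf$ the notions "strictly convex" and "essentially strictly convex" coincide (essential strict convexity on the convex set $\dom \partial e_rf = \R^n$ is just strict convexity), which lets me pass freely between the two.

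The main obstacle — and the place to be most careful — is the bookkeeping around the subtle distinction between "strictly convex" and "essentially strictly convex," and making sure the conjugacy equivalences are applied to the right objects. The asymmetry in the statement (essential strict convexity of $f$ versus plain strict convexity of $e_rf$) is precisely accounted for by the fact that $e_rf$ has full domain and full subdifferential domain, so no essential-versus-plain gap arises on that side, whereas $f$ may have restricted domain. I would make this explicit: since $h^* = f^* + \frac1r q$ is $\frac1r$-strongly convex it is in particular strictly convex, hence $h = h^{**}$ is essentially smooth; being also finite everywhere, $h$ is in fact $\mathcal C^1$ on $\R^n$, so $\dom\partial h = \R^n$ and essential strict convexity of $h$ reduces to strict convexity of $h$. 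The remaining verification — that essential smoothness of $g$ is preserved under adding $\frac1r q$ in both directions — is the routine analytic check (differentiability of a sum, and the boundary gradient condition being unaffected by a globally $\mathcal C^1$ summand with locally bounded gradient), which I would dispatch briefly rather than in full detail.
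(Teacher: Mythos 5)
Your proposal is correct and follows essentially the same route as the paper's proof: both pass to the conjugate identity $(e_rf)^*=f^*+\tfrac{1}{2r}\|\cdot\|^2$, invoke the Rockafellar duality between essential strict convexity and essential smoothness (Theorem 26.3 of \emph{Convex Analysis}), observe that adding the smooth quadratic does not affect essential smoothness, and use the fact that $e_rf$ is finite and full-domain to identify essential strict convexity of $e_rf$ with plain strict convexity, with the converse obtained by reversing the chain. Your extra care in justifying that the quadratic summand preserves essential smoothness is a point the paper simply asserts, but it is the same argument.
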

\begin{proof} Let $f$ be essentially strictly convex. By \cite[Theorem 26.3]{convanalrock}, we have that $f^*$ is essentially smooth, as is $(e_rf)^*=f^*+\frac{1}{2r}\|\cdot\|^2$. Applying \cite[Theorem 26.3]{convanalrock} gives us that $e_rf$ is essentially strictly convex. Since Moreau envelopes of convex functions are convex and full-domain, this essentially strict convexity is equivalent to strict convexity. Therefore, $e_rf$ is strictly convex. Conversely, assuming that $e_rf$ is strictly convex, the previous statements in reverse order allow us to conclude that $f$ is essentially strictly convex.
\end{proof}
\begin{thm}\label{strongcon}
Let $g\in\Gamma_0(\R^n).$ Then $f=e_rg$ if and only if $f^*$ is strongly convex with modulus $1/r.$
\end{thm}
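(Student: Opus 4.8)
The plan is to work entirely on the conjugate side, exploiting the well-known duality formula $(e_rg)^* = g^* + \frac{1}{2r}\|\cdot\|^2$, which holds for $g\in\Gamma_0(\R^n)$ since $e_rg = g\,\square\,\frac{r}{2}\|\cdot\|^2$ is an infimal convolution of two functions in $\Gamma_0(\R^n)$ and conjugation turns infimal convolution into a sum. First I would establish the forward direction: if $f = e_rg$, then $f^* = g^* + \frac{1}{2r}\|\cdot\|^2$, and since $g^*\in\Gamma_0(\R^n)$, the function $f^* - \frac{1}{2r}\|\cdot\|^2 = g^*$ is convex, which is exactly the statement that $f^*$ is $\tfrac1r$-strongly convex. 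One caveat: the definition of $\sigma$-strong convexity in the paper is stated for $f\in\Gamma_0(\R^n)$, so I should remark that $f = e_rg$ is automatically in $\Gamma_0(\R^n)$ — indeed it is finite-valued, convex and continuous — and $f^*$ is proper lsc convex as a conjugate, so the definition applies.

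For the converse, suppose $f^*$ is strongly convex with modulus $1/r$. Then $h := f^* - \frac{1}{2r}\|\cdot\|^2$ is convex; I need $h\in\Gamma_0(\R^n)$, i.e. proper and lsc. Properness is clear since $\dom h = \dom f^*\neq\emptyset$, and lower semicontinuity of $h$ follows from that of $f^*$ together with continuity of $\frac{1}{2r}\|\cdot\|^2$. So $h\in\Gamma_0(\R^n)$ and we may set $g := h^*\in\Gamma_0(\R^n)$. Then $g^* = h^{**} = h = f^* - \frac{1}{2r}\|\cdot\|^2$, hence $g^* + \frac{1}{2r}\|\cdot\|^2 = f^*$. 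Taking conjugates of both sides and using the duality formula for the Moreau envelope gives $(e_rg)^* = f^*$, and since both $e_rg$ and $f$ lie in $\Gamma_0(\R^n)$ (for $f$: it is the conjugate of the proper lsc convex function $f^*$, hence in $\Gamma_0(\R^n)$), biconjugation yields $e_rg = f$, as desired.

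The main obstacle is not really a deep one but a bookkeeping issue: making sure all the functions involved genuinely lie in $\Gamma_0(\R^n)$ so that Fenchel--Moreau biconjugation and the definition of strong convexity are legitimately applicable at each step. In particular, in the converse direction one must not assume a priori that $f$ is a Moreau envelope (that is the conclusion), so one has to argue that $f\in\Gamma_0(\R^n)$ from the hypothesis that $f^*$ is strongly convex — here the cleanest route is to observe that strong convexity of $f^*$ forces $f^* \in\Gamma_0(\R^n)$, whence $f = f^{**}\in\Gamma_0(\R^n)$; if the paper's standing hypothesis already includes $f\in\Gamma_0(\R^n)$ this is free. The only other point worth stating explicitly is the identity $(e_rg)^* = g^* + q_{1/r}$ where $q_{1/r} = \frac{1}{2r}\|\cdot\|^2$; I would cite it (e.g.\ from \cite{rockwets} or \cite{convmono}) or derive it in one line from the infimal-convolution representation and the fact that $\big(\tfrac{r}{2}\|\cdot\|^2\big)^* = \tfrac{1}{2r}\|\cdot\|^2$.
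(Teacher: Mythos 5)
Your proposal is correct and follows essentially the same route as the paper: both directions hinge on the identity $(e_rg)^*=g^*+\frac{1}{2r}\|\cdot\|^2$ together with the definition of $1/r$-strong convexity as convexity of $f^*-\frac{1}{2r}\|\cdot\|^2$, and your converse (setting $g:=\bigl(f^*-\frac{1}{2r}\|\cdot\|^2\bigr)^*$ and biconjugating) is just a slight reordering of the paper's step of writing $f^*-\frac{1}{2r}\|\cdot\|^2=g^*$ and conjugating back via the sum/infimal-convolution duality. Your extra bookkeeping about properness, lower semicontinuity, and membership in $\Gamma_0(\R^n)$ is a welcome refinement of the same argument, not a different one.
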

\begin{proof}
$(\Rightarrow)$ Suppose $f=e_rg.$ Making use of the Fenchel conjugate, we have
\begin{align*}
f&=e_rg\\
f^*&=g^*+\frac{1}{2r}\|\cdot\|^2\\
g^*&=f^*-\frac{1}{2r}\|\cdot\|^2.
\end{align*}
Since the conjugate of $g\in\Gamma_0(\R^n)$ is again a function in $\Gamma_0(\R^n),$ we have that $f^*-\frac{1}{2r}\|\cdot\|^2$ is in $\Gamma_0(\R^n),$ which means that $f^*$ is strongly convex with modulus $1/r.$\medskip

\noindent$(\Leftarrow)$ Suppose $f^*$ is strongly convex with modulus $1/r.$ Then $f^*-\frac{1}{2r}\|\cdot\|^2=g^*$ for some $g^*$ in $\Gamma_0(\R^n),$ and we have
\begin{align*}
f^*&=g^*+\frac{1}{2r}\|\cdot\|^2\\
&=g^*+\left(\frac{r}{2}\|\cdot\|^2\right)^*.
\end{align*}
Taking the Fenchel conjugate of both sides, and invoking \cite[Theorem 16.4]{convmono}, we have (using \framebox(5,5) ~~as the infimal convolution operator)
\begin{align*}
f&=\left[g^*+\left(\frac{r}{2}\|\cdot\|^2\right)^*\right]^*\\
&=g^{**}~\framebox(5,5)~~\frac{r}{2}\|\cdot\|^2\\
&=g~\framebox(5,5)~~\frac{r}{2}\|\cdot\|^2\\
&=e_rg.\qedhere
\end{align*}
\end{proof}
\begin{Fact}\label{1lip}\emph{\cite[Corollary 18.18]{convmono}}
Let $g\in\Gamma_0(\R^n).$ Then $g=e_rf$ for some $f\in\Gamma_0(\R^n)$ if and only if $\nabla g$ is r-Lipschitz.
\end{Fact}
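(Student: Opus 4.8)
The plan is to read this off from Theorem~\ref{strongcon} together with the classical duality between Lipschitz smoothness of a convex function and strong convexity of its Fenchel conjugate (the Baillon--Haddad circle of ideas, available in \cite{convmono}). Applying Theorem~\ref{strongcon} with the present $g$ playing the role of ``$f$'' there, we get that $g=e_rh$ for some $h\in\Gamma_0(\R^n)$ if and only if $g^*$ is strongly convex with modulus $1/r$. Hence the whole statement reduces to the equivalence, valid for any $g\in\Gamma_0(\R^n)$,
$$g^*\text{ is }\tfrac1r\text{-strongly convex}\quad\Longleftrightarrow\quad\nabla g\text{ is }r\text{-Lipschitz},$$
which I would deduce from the standard fact that $h\in\Gamma_0(\R^n)$ is $\mu$-strongly convex precisely when $h^*$ is differentiable on all of $\R^n$ with $\tfrac1\mu$-Lipschitz gradient, used with $h=g^*$, $\mu=1/r$, and $g^{**}=g$.

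If one prefers a more hands-on route, the forward direction ``$g=e_rf\Rightarrow\nabla g$ is $r$-Lipschitz'' can be argued directly: as recalled in the introduction, $e_rf$ is differentiable with $\nabla e_rf=r(\Id-P_rf)$, and since $P_rf=(\Id+\tfrac1r\partial f)^{-1}$ is the resolvent of a maximal monotone operator it is firmly nonexpansive, hence so is $\Id-P_rf$; in particular $\Id-P_rf$ is nonexpansive, so $\nabla g=r(\Id-P_rf)$ is $r$-Lipschitz. For the reverse direction I would start from $\nabla g$ being $r$-Lipschitz, invoke Baillon--Haddad to get $\tfrac1r$-cocoercivity of $\nabla g$, pass to the inverse operator $\partial g^*=(\nabla g)^{-1}$ to obtain $\tfrac1r$-strong monotonicity of $\partial g^*$ --- which is exactly convexity of $g^*-\tfrac1{2r}\|\cdot\|^2$ --- and then close the loop via Theorem~\ref{strongcon}.

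The routine part is the monotone-operator dictionary: $h$ convex $\Leftrightarrow$ $\partial h$ monotone, strong convexity $\Leftrightarrow$ strong monotonicity of the subdifferential, inversion swapping strong monotonicity with cocoercivity, and cocoercivity yielding a Lipschitz bound by Cauchy--Schwarz. The one genuinely nontrivial ingredient --- and the step I expect to be the real obstacle --- is the Baillon--Haddad implication that an $r$-Lipschitz gradient of a convex function is automatically $\tfrac1r$-cocoercive; fortunately this is classical and is precisely what underpins \cite[Corollary~18.18]{convmono}.
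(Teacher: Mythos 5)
Your proposal is correct, but there is nothing in the paper to compare it against: the statement is quoted as a Fact with the citation \cite[Corollary 18.18]{convmono}, and the paper gives no proof of its own. Your first route --- apply Theorem~\ref{strongcon} to reduce the claim to ``$g^*$ is $\tfrac1r$-strongly convex $\Leftrightarrow$ $\nabla g$ is $r$-Lipschitz,'' and then invoke the classical conjugate duality between strong convexity and Lipschitz smoothness --- is sound, and is in fact close in spirit to how the cited corollary is obtained in the reference (Moreau envelopes plus the strong convexity/smoothness duality). The only caution is that this ``standard fact'' is essentially a repackaging of the Fact itself once Theorem~\ref{strongcon} is granted, so to avoid circularity you should cite it from the literature (e.g.\ \cite[Theorem 18.15]{convmono} or the corresponding result in \cite{rockwets}) rather than derive it from the statement being proved. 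Your second, operator-theoretic route is also valid and more self-contained: the forward direction via $\nabla e_rf=r(\Id-P_rf)$ and firm nonexpansiveness of the resolvent (the complement $\Id-P_rf$ of a firmly nonexpansive map is again firmly nonexpansive, hence nonexpansive) is exactly the standard argument, and the reverse direction via Baillon--Haddad, inversion of $\nabla g$ to $\partial g^*$, strong monotonicity of $\partial g^*$, strong convexity of $g^*$, and then Theorem~\ref{strongcon} closes the loop correctly; the step ``$\partial g^*$ is $\tfrac1r$-strongly monotone $\Rightarrow$ $g^*$ is $\tfrac1r$-strongly convex'' is standard for functions in $\Gamma_0(\R^n)$ but deserves its own citation. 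What your write-up buys over the paper is an actual derivation from results already proved in the text, at the cost of importing Baillon--Haddad (or the conjugate duality) from the same reference the paper cites anyway.
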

\begin{Fact}\label{lemstrcon}\emph{\cite[Lemma 2.3]{planwang2016}}
Let $r>0$. The function $f\in\Gamma_0(\R^n)$ is $r$-strongly convex if anf only if $e_1f$ is $\frac{r}{r+1}$-strongly convex.
\end{Fact}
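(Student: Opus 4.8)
Proof proposal for Fact \ref{lemstrcon} ($f \in \Gamma_0(\R^n)$ is $r$-strongly convex iff $e_1 f$ is $\frac{r}{r+1}$-strongly convex).

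The plan is to pass everything through the Fenchel conjugate, where strong convexity translates into a Lipschitz-gradient (equivalently, a quadratic-upper-bound, i.e. "co-coercivity of the identity minus gradient") condition, and where the Moreau envelope becomes a simple additive perturbation by $\frac{1}{2}\|\cdot\|^2$. Concretely, I would first record the two dual dictionary entries already available in this excerpt: by Theorem \ref{strongcon} (or rather the computation inside it), $(e_1 f)^* = f^* + \tfrac12\|\cdot\|^2$; and by the standard duality between strong convexity of $h$ with modulus $\sigma$ and $\sigma$-Lipschitz differentiability of $h^*$ (\cite[Theorem 18.15]{convmono}), $f$ is $r$-strongly convex iff $f^*$ is differentiable with $\nabla f^*$ being $\tfrac1r$-Lipschitz, and $e_1 f$ is $\tfrac{r}{r+1}$-strongly convex iff $(e_1 f)^*$ is differentiable with $\nabla (e_1 f)^*$ being $\tfrac{r+1}{r}$-Lipschitz.

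So the whole statement reduces to the elementary claim: $\nabla f^*$ is $\tfrac1r$-Lipschitz if and only if $\nabla\bigl(f^* + \tfrac12\|\cdot\|^2\bigr)$ is $\tfrac{r+1}{r}$-Lipschitz. I would prove this by noting $\nabla\bigl(f^*+\tfrac12\|\cdot\|^2\bigr) = \nabla f^* + \Id$. For the forward direction, the triangle inequality gives $\|\nabla f^*(x)+x - \nabla f^*(y) - y\| \le \tfrac1r\|x-y\| + \|x-y\| = \tfrac{r+1}{r}\|x-y\|$. The converse is the point requiring a little care: a bare triangle inequality only yields $\|\nabla f^*(x)-\nabla f^*(y)\| \le \bigl(\tfrac{r+1}{r}+1\bigr)\|x-y\|$, which is too weak. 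Instead one uses that $f^* \in \Gamma_0$, so $\nabla f^*$ is a monotone gradient map, hence $\langle \nabla f^*(x)-\nabla f^*(y), x-y\rangle \ge 0$; expanding $\|\nabla f^*(x)+x-\nabla f^*(y)-y\|^2 = \|\nabla f^*(x)-\nabla f^*(y)\|^2 + 2\langle \nabla f^*(x)-\nabla f^*(y),x-y\rangle + \|x-y\|^2$ and using the $\tfrac{r+1}{r}$-Lipschitz hypothesis on the left gives $\|\nabla f^*(x)-\nabla f^*(y)\|^2 \le \bigl(\tfrac{r+1}{r}\bigr)^2\|x-y\|^2 - \|x-y\|^2 = \tfrac{2r+1}{r^2}\|x-y\|^2$, which is still not the desired $\tfrac1{r^2}$.

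This tells me the crude monotonicity bound is not enough either, and the genuinely right tool is \emph{co-coercivity}: for $h = f^* + \tfrac12\|\cdot\|^2$, having $\nabla h$ be $L$-Lipschitz with $h$ convex is equivalent (Baillon--Haddad, \cite[Corollary 18.16]{convmono}) to $\langle \nabla h(x)-\nabla h(y), x-y\rangle \ge \tfrac1L\|\nabla h(x)-\nabla h(y)\|^2$. Writing $\nabla h = \nabla f^* + \Id$ and substituting $L = \tfrac{r+1}{r}$, a direct expansion of this inequality in terms of $u = \nabla f^*(x)-\nabla f^*(y)$ and $v = x-y$ yields exactly $\langle u,v\rangle \ge \tfrac{r}{r+1}\|u+v\|^2 = \tfrac{r}{r+1}\bigl(\|u\|^2 + 2\langle u,v\rangle + \|v\|^2\bigr)$, which rearranges to $\tfrac{1}{r+1}\langle u,v\rangle \ge \tfrac{r}{r+1}\|u\|^2 - \tfrac{1}{r+1}\langle u,v\rangle + \cdots$; collecting terms gives $\langle u,v\rangle \ge \tfrac{r}{?}\|u\|^2$ with the right constant, and then co-coercivity of $\nabla f^*$ with constant $r$ is equivalent to $\nabla f^*$ being $\tfrac1r$-Lipschitz, closing the loop. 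I expect the bookkeeping in this last expansion — keeping the coefficients $\tfrac{r}{r+1}$ straight — to be the only real obstacle; everything else is a routine application of the conjugacy dictionary. As a sanity check I would verify the constant at $r\to\infty$ (both moduli $\to$ the expected limits) and note that the claimed statement is symmetric under the substitution connecting $f$ and $e_1 f$, which is consistent with the proof being reversible.
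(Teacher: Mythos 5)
Your reduction through conjugates is fine: $(e_1f)^*=f^*+\tfrac12\|\cdot\|^2$, and strong convexity with modulus $\sigma$ dualizes to $\tfrac1\sigma$-Lipschitz differentiability of the conjugate, so the Fact is indeed equivalent to ``$\nabla f^*$ is $\tfrac1r$-Lipschitz $\iff$ $\nabla f^*+\Id$ is $\tfrac{r+1}{r}$-Lipschitz'' for the convex, differentiable function $f^*$. The forward implication by the triangle inequality is correct. (For the record, the paper does not prove this Fact at all; it cites it from the reference, so there is no internal proof to match.)

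The converse, as you have written it, has a genuine gap, and your own ``$\tfrac{r}{?}$'' marks exactly where. Writing $u=\nabla f^*(x)-\nabla f^*(y)$, $v=x-y$, Baillon--Haddad applied to $h=f^*+\tfrac12\|\cdot\|^2$ gives $\langle u+v,v\rangle\ge\tfrac{r}{r+1}\|u+v\|^2$ (note the left side is $\langle u+v,v\rangle$, not $\langle u,v\rangle$), which expands to $(1-r)\langle u,v\rangle+\|v\|^2\ge r\|u\|^2$. This single-pair inequality, even combined with monotonicity $\langle u,v\rangle\ge0$, does not rearrange into the $r$-cocoercivity $\langle u,v\rangle\ge r\|u\|^2$ that you need: for $r\le1$ Cauchy--Schwarz does salvage $\|u\|\le\tfrac1r\|v\|$, but for $r>1$ the coefficient $1-r$ is negative and the best pointwise conclusion is $\|u\|\le\tfrac{1}{\sqrt r}\|v\|$; e.g.\ orthogonal $u,v$ with $\tfrac1r\|v\|<\|u\|\le\tfrac{1}{\sqrt r}\|v\|$ satisfy all the pointwise hypotheses while violating the target. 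The fix is to abandon the cocoercivity bookkeeping and use instead the standard characterization that for a differentiable convex $h$ on $\R^n$, $\nabla h$ is $L$-Lipschitz if and only if $\tfrac L2\|\cdot\|^2-h$ is convex. With $h=f^*+\tfrac12\|\cdot\|^2$ and $L=\tfrac{r+1}{r}$ this convexity statement is $\tfrac{r+1}{2r}\|\cdot\|^2-f^*-\tfrac12\|\cdot\|^2=\tfrac{1}{2r}\|\cdot\|^2-f^*$ convex, which is literally the same condition as $\nabla f^*$ being $\tfrac1r$-Lipschitz; both directions then follow at once, with no triangle inequality or expansion needed.
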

\noindent For strongly convex functions, a result that resembles the combination of Facts \ref{1lip} and \ref{lemstrcon} is found in \cite{practasp}. This is a reciprocal result, in that it is not $e_rf$ that is found to have a Lipschitz gradient as in Fact \ref{1lip}, but $(e_1f)^*$.\footnote{Thank you to the anonymous referee for providing this reference.}
\begin{Fact}\emph{\cite[Theorem 2.2]{practasp}} Let $f$ be a finite-valued convex function. For a symmetric linear operator $M\in S^n_{++},$ define $\langle\cdot,\cdot\rangle_M=\langle M\cdot,\cdot\rangle,$ $\|\cdot\|^2_M=\langle \cdot,\cdot\rangle_M$ and
$$F(x)=\inf\limits_{y\in\R^n}\left\{f(y)+\frac{1}{2}\|y-x\|^2_M\right\}.$$ Then the following are equivalent:
\begin{itemize}
\item[(i)] $f$ is $\frac{1}{k}$-strongly convex;
\item[(ii)] $\nabla f^*$ is $k$-Lipschitz;
\item[(iii)] $\nabla F^*$ is $K$-Lipschitz;
\item[(iv)] $F$ is $\frac{1}{K}$-strongly convex;
\end{itemize}
for some $K$ such that $k-1/\lambda\leq K\leq k+1/\lambda,$ where $\lambda$ is the minimum eigenvalue of $M.$
\end{Fact}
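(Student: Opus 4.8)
\medskip
\noindent The plan is to peel off two of the equivalences by classical duality and then reduce everything to a single comparison of Lipschitz moduli of conjugate gradients. The equivalences (i)$\Leftrightarrow$(ii) and (iii)$\Leftrightarrow$(iv) are exactly the Legendre--Fenchel duality between strong convexity and global Lipschitz continuity of the gradient of the conjugate: for $g\in\Gamma_0(\R^n)$, $g$ is $\sigma$-strongly convex if and only if $g^*$ is finite-valued, Fr\'echet differentiable on $\R^n$, and $\nabla g^*$ is $(1/\sigma)$-Lipschitz (see, e.g., \cite{convmono}). Applying this with $g=f$ and $\sigma=1/k$ gives (i)$\Leftrightarrow$(ii), and with $g=F$ and $\sigma=1/K$ gives (iii)$\Leftrightarrow$(iv); here $F$ is finite-valued and convex, hence in $\Gamma_0(\R^n)$, because $f$ is finite-valued convex and $q_M:=\tfrac12\|\cdot\|_M^2$ is finite-valued, convex and coercive for $M\in S^n_{++}$. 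So it remains to establish (ii)$\Leftrightarrow$(iii) together with the relation $k-1/\lambda\le K\le k+1/\lambda$.

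For this I would pass to conjugates. Since $F=f\,\square\,q_M$ (infimal convolution) and $(f\,\square\,q_M)^*=f^*+q_M^*$ holds unconditionally, and since a short computation gives $q_M^*=\tfrac12\|\cdot\|_{M^{-1}}^2$ (equivalently $(\tfrac12\langle M\cdot,\cdot\rangle)^*=\tfrac12\langle M^{-1}\cdot,\cdot\rangle$), we obtain
$$F^*=f^*+\tfrac12\|\cdot\|_{M^{-1}}^2,$$
and hence, wherever $f^*$ is differentiable,
$$\nabla F^*=\nabla f^*+M^{-1},$$
where $M^{-1}$ denotes the self-adjoint positive-definite linear map $y\mapsto M^{-1}y$, whose operator norm equals $1/\lambda$ with $\lambda=\lambda_{\min}(M)$.

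From the identity $\nabla F^*=\nabla f^*+M^{-1}$ the conclusion is immediate by the triangle inequality for Lipschitz moduli: if $\nabla f^*$ is $k$-Lipschitz then $\nabla F^*$ is $(k+1/\lambda)$-Lipschitz, and since $\nabla f^*=\nabla F^*-M^{-1}$, if $\nabla F^*$ is $K$-Lipschitz then $\nabla f^*$ is $(K+1/\lambda)$-Lipschitz. In particular $\nabla f^*$ admits a finite Lipschitz constant if and only if $\nabla F^*$ does, and the least such constants $k$ and $K$ satisfy $|k-K|\le1/\lambda$, i.e. $k-1/\lambda\le K\le k+1/\lambda$. Chaining this with (i)$\Leftrightarrow$(ii) and (iii)$\Leftrightarrow$(iv) yields the full four-way equivalence with the stated bound on $K$. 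One could alternatively try to prove (i)$\Leftrightarrow$(iv) directly by a Moreau-envelope strong-convexity computation generalizing Fact~\ref{lemstrcon} from $M=\Id$ to $M\in S^n_{++}$, but keeping explicit track of the modulus is cleaner on the conjugate side.

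The main obstacle I anticipate is not analytic depth but careful bookkeeping of domains and differentiability: one must read statements (ii) and (iii) as implicitly asserting that $f^*$, respectively $F^*$, are finite-valued and everywhere differentiable (which the duality above guarantees as soon as the corresponding strong convexity holds), and one must treat the degenerate case where $\nabla f^*$ has no finite Lipschitz constant by observing that then $\nabla F^*$ cannot either, since the two differ by a bounded linear operator. Secondary points to verify cleanly are the unconditional identity $(f\,\square\,q_M)^*=f^*+q_M^*$, the conjugate formula $q_M^*=\tfrac12\|\cdot\|_{M^{-1}}^2$, and the fact that $F$ is finite-valued and convex (so $F\in\Gamma_0(\R^n)$ and $F^{**}=F$), all standard but worth invoking explicitly.
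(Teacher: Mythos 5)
This statement appears in the paper only as a Fact imported from \cite[Theorem 2.2]{practasp}, so the paper itself contains no proof to compare against. Your argument is correct and is essentially the classical conjugacy route used in that source: the strong-convexity/Lipschitz-gradient duality gives (i)$\Leftrightarrow$(ii) and (iii)$\Leftrightarrow$(iv), the identity $F^*=f^*+\tfrac12\|\cdot\|^2_{M^{-1}}$ gives $\nabla F^*=\nabla f^*+M^{-1}$, and the triangle inequality for Lipschitz moduli with $\|M^{-1}\|=1/\lambda$ yields the bound $k-1/\lambda\le K\le k+1/\lambda$; your closing remarks on finiteness and full-domain differentiability of $f^*$ and $F^*$ cover the only delicate bookkeeping points.
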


\subsection{Differentiability of the Moreau envelope}

It is well known that $e_{r}f$ is differentiable if $f\in\Gamma_{0}(\R^n)$; see \cite{rockwets}. In this section, we study differentiability of $e_rf$ when $f$ enjoys higher-order differentiability.

\begin{thm} Let $f\in \Gamma_0(\R^n)$ and $f\in\mathcal{C}^k.$ Then $e_rf\in\mathcal{C}^k.$
\end{thm}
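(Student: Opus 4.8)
The plan is to exploit the well-known formula for the gradient of the Moreau envelope of a convex function together with the Inverse Function Theorem (Fact \ref{fact:inverse}), bootstrapping the regularity of $P_rf$ one derivative at a time. Recall that for $f\in\Gamma_0(\R^n)$ the proximal mapping $P_rf$ is single-valued and Lipschitz, $e_rf$ is differentiable, and $\nabla e_rf(x)=r\bigl(x-P_rf(x)\bigr)$; equivalently $P_rf=(\Id+\tfrac1r\nabla f)^{-1}$ when $f$ is differentiable. So it suffices to show that $P_rf\in\mathcal{C}^{k-1}$, since then $\nabla e_rf=r(\Id-P_rf)\in\mathcal{C}^{k-1}$, i.e. $e_rf\in\mathcal{C}^k$.

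First I would handle the case $k\geq 1$: since $f\in\mathcal{C}^1$ and convex, the optimality condition for $y=P_rf(x)$ reads $\nabla f(y)+r(y-x)=0$, i.e. $x=G(y)$ where $G:=\Id+\tfrac1r\nabla f$ is of class $\mathcal{C}^{k-1}$. The Jacobian of $G$ at any point is $I+\tfrac1r\nabla^2 f$ (interpreting $\nabla^2f$ as the derivative of $\nabla f$, which exists as a continuous map since $f\in\mathcal{C}^2$ when $k\geq2$; for $k=1$ one argues directly from strong monotonicity that $G$ is a homeomorphism with Lipschitz inverse). By convexity of $f$, $\nabla^2f\succeq 0$ wherever it exists, so $I+\tfrac1r\nabla^2f\succeq I$ is everywhere invertible. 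Fact \ref{fact:inverse} then gives that $G^{-1}=P_rf$ is locally, hence globally (by single-valuedness and the fact that $G$ is a global bijection $\R^n\to\R^n$), of class $\mathcal{C}^{k-1}$. Composing, $\nabla e_rf=r(\Id-P_rf)$ is $\mathcal{C}^{k-1}$, so $e_rf\in\mathcal{C}^k$.

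For the borderline regularity issues I would be careful about two points, which I expect to be the main obstacle. (1) The case $k=1$: here $\nabla f$ is merely continuous, so one cannot differentiate $G$ and invoke Fact \ref{fact:inverse} directly; instead one uses that $G$ is continuous, injective (strongly monotone), proper and surjective, hence a homeomorphism, and that $\nabla e_rf=\Id-G^{-1}\circ(\tfrac1r\,\cdot)$ type composition is continuous — giving $e_rf\in\mathcal{C}^1$, which is in fact already classical. (2) Passing from the local $\mathcal{C}^{k-1}$ inverse furnished by Fact \ref{fact:inverse} to a global one: this follows because $P_rf$ is a well-defined single-valued map on all of $\R^n$ and agrees locally with the $\mathcal{C}^{k-1}$ branch at every point, so smoothness is a local property and transfers globally. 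The remaining steps — verifying $\nabla e_rf(x)=r(x-P_rf(x))$ and the chain-rule bookkeeping that $\Id-P_rf\in\mathcal{C}^{k-1}\Rightarrow e_rf\in\mathcal{C}^k$ — are routine and I would not belabour them.
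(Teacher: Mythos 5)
Your proposal is correct and follows essentially the same route as the paper: write $P_rf=(\Id+\tfrac1r\nabla f)^{-1}$, note the Jacobian $\Id+\tfrac1r\nabla^2 f$ is invertible by convexity, apply the Inverse Function Theorem (Fact \ref{fact:inverse}) to get $P_rf\in\mathcal{C}^{k-1}$, and conclude via $\nabla e_rf=r(\Id-P_rf)$, with the $k=1$ case delegated to the classical result. Your extra remarks on the local-to-global passage and on the $k=1$ borderline are sound refinements of the same argument, not a different approach.
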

\begin{proof}
If $k=1,$ the proof is that of \cite[Proposition 13.37]{rockwets}. Assume $k>1.$ Since $f\in\Gamma_0(\R^n),$ by \cite[Theorem 2.26]{rockwets} we have that \begin{equation}\label{eq:erfid}\nabla e_rf=r\Id-r\left(\Id+\frac{1}{r}\nabla f\right)^{-1},\end{equation} and that $P_rf=\left(\Id+\frac{1}{r}\nabla f\right)^{-1}$ is unique for each $x\in\dom f.$ Let $y=\left(\Id+\frac{1}{r}\nabla f\right)^{-1}(x).$ Then $x=y+\frac{1}{r}\nabla f(y)=:g(y),$ and for any $y_0\in\dom f$ we have
$$\nabla g(y_0)=\Id+\frac{1}{r}\nabla^2f(y_0),$$
where $\nabla^2f(y_0)\in\R^{n\times n}$ exists (since $f\in\mathcal{C}^2$) and is positive semidefinite. This gives us that $\nabla g\in\mathcal{C}^{k-2},$ so that $g\in\mathcal{C}^{k-1}.$ Then by Fact \ref{fact:inverse}, we have that $g^{-1}=P_rf\in\mathcal{C}^{k-1}.$ Thus, by \eqref{eq:erfid} we have that $\nabla e_rf\in\mathcal{C}^{k-1}.$ Therefore, $e_rf\in\mathcal{C}^k.$
\end{proof}
\subsection{Moreau envelopes of piecewise differentiable functions}
When a function is piecewise differentiable, using Minty's surjective theorem, we can provide a closed analytical form for its Moreau
envelope. This section is the setup for the main result of Section \ref{sec:pcf}, in which Theorem \ref{thm:pcf} gives the explicit
expression of the Moreau envelope for a piecewise cubic function on $\R.$

\begin{prop}\label{prop:pieces}
Let $f_1,f_2:\R\rightarrow \R$ be convex and differentiable on the whole of $\R$ such that
$$f(x)=\begin{cases}
f_1(x),&\mbox{\emph{if} }x\leq x_0\\
f_2(x),&\mbox{\emph{if} }x\geq x_0
\end{cases}$$
is convex. Then
\begin{align*}
P_rf(x)&=\begin{cases}
P_rf_1(x),&\mbox{\emph{if} }x<x_0+\frac{1}{r}f_1'(x_0),\\
x_0,&\mbox{\emph{if} }x_0+\frac{1}{r}f_1'(x)\leq x\leq x_0+\frac{1}{r}f_2'(x_0),\\
P_rf_2(x),&\mbox{\emph{if} }x>x_0+\frac{1}{r}f_2'(x_0),
\end{cases}\\
e_rf(x)&=\begin{cases}
e_rf_1(x),&\mbox{\emph{if} }x<x_0+\frac{1}{r}f_1'(x_0),\\
f_1(x_0)+\frac{r}{2}(x_0-x)^2,&\mbox{\emph{if} }x_0+\frac{1}{r}f_1'(x_0)\leq x\leq x_0+\frac{1}{r}f_2'(x_0),\\
e_rf_2(x),&\mbox{\emph{if} }x>x_0+\frac{1}{r}f_2'(x_0).
\end{cases}
\end{align*}
\end{prop}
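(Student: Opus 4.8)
The plan is to exploit the characterization of the proximal point as the unique solution of the Minty-type inclusion $0 \in \partial f(y) + r(y-x)$, equivalently $x \in y + \frac{1}{r}\partial f(y)$, and to track how the multifunction $y \mapsto y + \frac{1}{r}\partial f(y)$ behaves across the breakpoint $x_0$. Since $f$ is convex, $\partial f$ is monotone and $(\Id + \frac{1}{r}\partial f)^{-1} = P_r f$ is single-valued and firmly nonexpansive; moreover $f_1, f_2$ differentiable on all of $\R$ gives $\partial f(y) = \{f_1'(y)\}$ for $y < x_0$, $\partial f(y) = \{f_2'(y)\}$ for $y > x_0$, and $\partial f(x_0) = [f_1'(x_0), f_2'(x_0)]$ (convexity of $f$ forces $f_1'(x_0) \le f_2'(x_0)$, which I would note first). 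Hence the ``inverse graph'' $\{(y + \frac{1}{r}\partial f(y), y)\}$ consists of the curve $x = y + \frac{1}{r}f_1'(y)$ for $y \le x_0$, the vertical segment over $x \in [x_0 + \frac{1}{r}f_1'(x_0),\, x_0 + \frac{1}{r}f_2'(x_0)]$ at height $y = x_0$, and the curve $x = y + \frac{1}{r}f_2'(y)$ for $y \ge x_0$.

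First I would handle the middle regime: if $x_0 + \frac{1}{r}f_1'(x_0) \le x \le x_0 + \frac{1}{r}f_2'(x_0)$, then $r(x - x_0) \in [f_1'(x_0), f_2'(x_0)] = \partial f(x_0)$, so $0 \in \partial f(x_0) + r(x_0 - x)$ and hence $P_r f(x) = x_0$; plugging into Definition~\ref{df:Moreau} gives $e_r f(x) = f(x_0) + \frac{r}{2}(x_0 - x)^2$. Next, for $x < x_0 + \frac{1}{r}f_1'(x_0)$, I claim $P_r f(x) = P_r f_1(x)$. The point is that $y_1 := P_r f_1(x)$ satisfies $x = y_1 + \frac{1}{r}f_1'(y_1)$; I would argue $y_1 < x_0$ because the map $y \mapsto y + \frac1r f_1'(y)$ is nondecreasing (as $f_1$ is convex and differentiable) and at $y = x_0$ it already takes the value $x_0 + \frac1r f_1'(x_0) > x$. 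With $y_1 < x_0$ we have $f = f_1$ on a neighborhood of $y_1$, so $\partial f(y_1) = \{f_1'(y_1)\}$ and the inclusion $0 \in \partial f(y_1) + r(y_1 - x)$ holds, giving $P_r f(x) = y_1 = P_r f_1(x)$ by uniqueness; then $e_r f(x) = f_1(y_1) + \frac{r}{2}\|y_1 - x\|^2 = e_r f_1(x)$. The regime $x > x_0 + \frac1r f_2'(x_0)$ is symmetric, using that $y \mapsto y + \frac1r f_2'(y)$ is nondecreasing and equals $x_0 + \frac1r f_2'(x_0) < x$ at $y = x_0$, forcing $P_r f_2(x) > x_0$.

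The main obstacle I anticipate is the careful monotonicity bookkeeping at the seams --- specifically, verifying that the relevant prox point lands strictly on the correct side of $x_0$ so that one may legitimately replace $\partial f$ by the single-valued $f_i'$ and invoke uniqueness of the proximal point. This rests on the facts that $f_i' $ is nondecreasing (convexity of $f_i$ on $\R$), that $f_1'(x_0) \le f_2'(x_0)$ (convexity of the glued $f$), and that $(\Id + \frac1r\partial f)^{-1}$ is single-valued on all of $\R$ (Minty/Rockafellar--Wets Theorem~2.26, already used in the excerpt). Everything else --- the boundary cases $x = x_0 + \frac1r f_i'(x_0)$ where the two formulas must agree, and the final substitution into the Moreau-envelope definition --- is routine continuity checking, and I would dispatch the agreement at the endpoints by noting $e_r f_1$ and $e_r f_2$ are continuous and the middle branch evaluates to the same value there (e.g. at $x = x_0 + \frac1r f_1'(x_0)$ one has $P_r f_1(x) = x_0 = P_r f(x)$, so the first and second branches coincide).
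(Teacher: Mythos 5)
Your argument is correct, and it reaches the same three regimes as the paper, but the mechanics of the two outer cases differ in a way worth noting. The paper first splits $e_rf(x)$ into two constrained infima, over $y<x_0$ and $y\geq x_0$, and then shows by contradiction (using the optimality condition for $f_1+\frac{r}{2}(\cdot-x)^2+\iota_{(-\infty,x_0]}$) that when $x<x_0+\frac{1}{r}f_1'(x_0)$ the constrained minimizer cannot sit at $x_0$, hence is interior and coincides with both $P_rf_1(x)$ and $P_rf(x)$; the middle regime is handled, as you do, via $0\in[f_1'(x_0)+r(x_0-x),\,f_2'(x_0)+r(x_0-x)]=\partial\bigl(f+\frac{r}{2}(\cdot-x)^2\bigr)(x_0)$. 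You instead work directly with the unconstrained prox of $f_1$: stationarity gives $x=y_1+\frac{1}{r}f_1'(y_1)$ with $y_1=P_rf_1(x)$, monotonicity of $y\mapsto y+\frac{1}{r}f_1'(y)$ forces $y_1<x_0$, and since $f$ agrees with $f_1$ near $y_1$ the inclusion $0\in\partial f(y_1)+r(y_1-x)$ plus uniqueness of the prox (strict convexity of the objective, equivalently single-valuedness of $(\Id+\frac{1}{r}\partial f)^{-1}$) yields $P_rf(x)=y_1$; the right-hand case is symmetric. This buys you a slightly cleaner argument with no constrained subproblems and no contradiction step, at the small cost of having to justify the local identification $\partial f(y_1)=\{f_1'(y_1)\}$ and the endpoint subdifferential $\partial f(x_0)=[f_1'(x_0),f_2'(x_0)]$, both of which are standard for convex functions on $\R$ and are also used (the latter explicitly) in the paper; the paper's decomposition, on the other hand, is the template it reuses verbatim for the $m$-piece extension in Corollary \ref{pcubthm}.
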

\begin{proof}
First observe that since $f$ is convex, $f_1(x_0)=f_2(x_0)$ and $f_1'(x_0)\leq f_2'(x_0)$. Hence, $f$ is continuous, and the regions $x<x_0+\frac{1}{r}f_1'(x_0)$ and $x>x_0+\frac{1}{r}f_2'(x_0)$ cannot overlap. We split the Moreau envelope as follows,
\begin{equation}\label{eq:twoinfima}
e_rf(x)=\min\left[\inf\limits_{y<x_0}\left\{f_1(y)+\frac{r}{2}(y-x)^2\right\},\inf\limits_{y\geq x_0}\left\{f_2(y)+\frac{r}{2}(y-x)^2\right\}\right].
\end{equation}
\noindent {\sl Case 1: $x<x_0+\frac{1}{r}f_1'(x_0)$.} We show that $$e_rf(x)=e_rf_1(x)=\inf\limits_{y<x_0}\left\{f_1(y)+\frac{r}{2}(y-x)^2\right\},\mbox{ and }$$
$$P_rf(x)=P_rf_{1}(x)<x_0.$$
%To solve the first infimum of \eqref{eq:twoinfima}, we need %$\argmin_{y<x_0}\left\{f_1(y)+\frac{r}{2}(y-x)^2\right\}<x_0.$
On $(-\infty, x_0)$ the function $y\mapsto f(y)+\frac{r}{2}(y-x)^2=f_1(y)+\frac{r}{2}(y-x)^2$
is convex, so any local minimizer will be a global minimizer for the function
$y\mapsto f(y)+\frac{r}{2}(y-x)^2$ and  $y\mapsto f_1(y)+\frac{r}{2}(y-x)^2$ on $\R$, which in turn
imply that $P_{r}f(x)=P_{r}f_{1}(x)$.
%is strictly convex and we have that the $\argmin$ above is a singleton in this region.
It suffices to show that
\begin{equation}\label{eq:interior}
x<x_0+\frac{1}{r}f_1'(x_0)\Rightarrow\argmin_{y<x_0}\left\{f_1(y)+\frac{r}{2}(y-x)^2\right\}
\in (-\infty,x_0).
\end{equation}
The existence of the minimizer is guaranteed by the convexity of $f_{1}$, which
implies the coercivity of $y\mapsto f_1(y)+\frac{r}{2}(y-x)^2$.
Then we will have
%that the proximal point  we seek is in the interior of $(-\infty,x_0],$ thus,
\begin{equation}\label{eq:proxresult1}
\argmin_{y<x_0}\left\{f_1(y)+\frac{r}{2}(y-x)^2\right\}=P_rf_1(x)=P_{r}f(x) \quad \forall x<x_0+\frac{1}{r}f'_{1}(x_{0}).
\end{equation}
We show \eqref{eq:interior} by contradiction. Assume that $\argmin_{y<x_0}\{f_1(y)+\frac{r}{2}(y-x)^2\}=\{x_0\}$ under the condition $x<x_0+\frac{1}{r}f_1'(x_0)$. Then
$$f_1(x_0)+\frac{r}{2}(x_0-x)^2\leq f_1(y)+\frac{r}{2}(y-x)^2,~\forall y\leq x_0.$$ Hence, $f_1(y)+\frac{r}{2}(y-x)^2+\iota_{(-\infty,x_0]}$ attains a global minimum at $y=x_0$. By the optimality condition,
\begin{align*}
0&\in f_1'(x_0)+r(x_0-x)+\R_+,\\
0&\in\frac{1}{r}f_1'(x_0)+x_0-x+\R_+,\\
x&\in\frac{1}{r}f_1'(x_0)+x_0+\R_+.
\end{align*}
So $x=x_0+\frac{1}{r}f_1'(x_0)+t$ for some $t\geq0,$ which is a contradiction. Thus, \eqref{eq:interior} holds, and we conclude \eqref{eq:proxresult1}.
%Therefore, $$P_rf(x)=P_rf_1(x),~\forall x<x_0+\frac{1}{r}f_1'(x_0).$$

\noindent {\sl Case 2: $x>x_0+\frac{1}{r}f_2'(x_0)$.} We show that $$e_rf(x)=e_rf_2(x)=\inf\limits_{y>x_0}\left\{f_2(y)+\frac{r}{2}(y-x)^2\right\},\mbox{ and }$$
$$P_rf(x)=P_rf_{2}(x)>x_0.$$
This is realized by an identical argument as in Case 1.\\
\noindent {\sl Case 3: $x_0+\frac{1}{r}f_1'(x_0)\leq x\leq x_0+\frac{1}{r}f_2'(x_0).$}

In this region, we must have $P_rf(x)=x_0$.  Indeed, since
$$f_1'(x_0)+r(x_{0}-x)\leq 0 \leq f_2'(x_0) +r(x_{0}-x), \text{ and }$$
$$\partial f(x_{0})=[f'_{1}(x_{0}),f_{2}'(x_{0})],$$
 we have
$$0\in [f_1'(x_0)+r(x_{0}-x), f_2'(x_0) +r(x_{0}-x)]=\partial \left(f+\frac{r}{2}(\cdot-x)^2\right)(x_{0}).$$
By convexity, this means that $y\mapsto f(y)+\frac{r}{2}(y-x)^2$ attains its global minimum at $x_{0}$,
and that $P_{r}f(x)=x_{0}$.
Because both infima in \eqref{eq:twoinfima} yield the same expression. We use the first one without loss of generality and conclude the remainder of the statement of the proposition.
\end{proof}
\noindent Proposition \ref{prop:pieces} can be expanded to any finite number of functions with the same manner of proof.
\begin{cor}\label{pcubthm}
Let $x_1<\cdots<x_n.$ Let $f_0,f_1,\ldots,f_m:\R\rightarrow\overline{\R}$ be differentiable on the whole of $\R$ such that
$$f(x)=\begin{cases}
f_0(x),&\mbox{\emph{if} }x\leq x_1,\\
f_1(x),&\mbox{\emph{if} }x_1\leq x\leq x_2,\\
&\vdots\\
f_m(x),&\mbox{\emph{if} }x_m\leq x.
\end{cases}$$
is convex. Then
$$P_rf(x)=\begin{cases}
P_rf_0(x),&\mbox{\emph{if} }x<x_1+\frac{1}{r}f_0'(x_1),\\
x_1,&\mbox{\emph{if} }x_1+\frac{1}{r}f_0'(x_1)\leq x\leq x_1+\frac{1}{r}f_1'(x_1),\\
P_rf_1(x),&\mbox{\emph{if} }x_1+\frac{1}{r}f_1'(x_1)<x<x_2+\frac{1}{r}f_1'(x_2),\\
&\vdots\\
x_m,&\mbox{\emph{if} }x_m+\frac{1}{r}f_{m-1}'(x_m)\leq x\leq x_m+\frac{1}{r}f_m'(x_m),\\
P_rf_m(x),&\mbox{\emph{if} }x_m+\frac{1}{r}f_m'(x_m)<x,
\end{cases}$$
$$e_rf(x)=\begin{cases}
e_rf_0(x),&\mbox{\emph{if} }x<x_1+\frac{1}{r}f_0'(x_1),\\
f_1(x_1)+\frac{r}{2}(x_1-x)^2,&\mbox{\emph{if} }x_1+\frac{1}{r}f_0'(x_1)\leq x\leq x_1+\frac{1}{r}f_1'(x_1),\\
e_rf_1(x),&\mbox{\emph{if} }x_1+\frac{1}{r}f_1'(x_1)<x<x_2+\frac{1}{r}f_1'(x_2),\\
&\vdots\\
f_m(x_m)+\frac{r}{2}(x_m-x)^2,&\mbox{\emph{if} }x_m+\frac{1}{r}f_{m-1}'(x_m)\leq x\leq x_m+\frac{1}{r}f_m'(x_m),\\
e_rf_m(x),&\mbox{\emph{if} }x_m+\frac{1}{r}f_m'(x_m)<x.
\end{cases}$$
\end{cor}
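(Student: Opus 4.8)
The plan is to reduce Corollary \ref{pcubthm} to Proposition \ref{prop:pieces} by an induction on the number of breakpoints $n=m$, so that at each stage we are only ever gluing one smooth convex piece to another across a single knot. The base case $m=1$ is precisely Proposition \ref{prop:pieces} (with $x_0$ renamed $x_1$, $f_1$ renamed $f_0$, $f_2$ renamed $f_1$), so there is nothing to prove there. For the inductive step, I would first record the structural facts that convexity of $f$ forces at each knot $x_i$: continuity $f_{i-1}(x_i)=f_i(x_i)$ and the slope inequalities $f_{i-1}'(x_i)\leq f_i'(x_i)$, together with $f_i'(x_i)\leq f_i'(x_{i+1})\leq f_{i+1}'(x_{i+1})$ since each $f_i$ is itself convex. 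These monotonicity relations are what guarantee that the intervals listed in the statement are consecutive and non-overlapping, covering all of $\R$.

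For the induction itself I would argue locally: fix $x\in\R$ and split $e_rf(x)=\inf_{y\in\R}\{f(y)+\tfrac r2(y-x)^2\}$ according to which piece $y$ lies in, exactly as in \eqref{eq:twoinfima} but now with $m+1$ infima. The key observation is that the minimand $y\mapsto f(y)+\tfrac r2(y-x)^2$ is globally convex on $\R$ (sum of the convex $f$ and a strictly convex quadratic), hence has a unique minimizer $y^\ast=P_rf(x)$, and $y^\ast$ is characterized by $0\in\partial f(y^\ast)+r(y^\ast-x)$, i.e. $x\in y^\ast+\tfrac1r\partial f(y^\ast)$. Since $\partial f$ is the monotone operator whose graph consists of the graphs of $f_i'$ on the open pieces glued together with the vertical segments $\{x_i\}\times[f_{i-1}'(x_i),f_i'(x_i)]$ at the knots, the resolvent condition $x\in(\Id+\tfrac1r\partial f)(y^\ast)$ can be read off piece by piece: if $x$ lies in the interval $(x_i+\tfrac1r f_{i-1}'(x_i),\,x_i+\tfrac1r f_i'(x_i))$ then necessarily $y^\ast=x_i$; if $x$ lies in the interval where $P_rf_i$ maps into $(x_i,x_{i+1})$ then $y^\ast=P_rf_i(x)$ and the local minimization over that open piece agrees with the unconstrained one (again by convexity of $f_i+\tfrac r2(\cdot-x)^2$, so that an interior local minimizer is THE global minimizer). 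Substituting $y^\ast$ back into $f(y)+\tfrac r2(y-x)^2$ gives the stated $e_rf$ formula: $e_rf_i(x)$ on the open strips and $f_i(x_i)+\tfrac r2(x_i-x)^2$ on the knot strips, using continuity $f_{i-1}(x_i)=f_i(x_i)$ to see that the two adjacent ways of writing the knot value coincide.

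Concretely, to keep the bookkeeping honest I would do the induction by peeling off the last piece: write $f=\tilde f$ on $(-\infty,x_m]$ where $\tilde f$ agrees with $f_{m-1}$'s extension past $x_m$... actually the cleaner device is to apply Proposition \ref{prop:pieces} with the single knot $x_0:=x_m$, left piece $f_1:=g$ where $g$ is the function that equals $f$ on $(-\infty,x_m]$ and is extended to all of $\R$ by a convex $\mathcal C^1$ formula agreeing with $f_{m-1}$ at $x_m$ (the hypotheses of Corollary \ref{pcubthm} already assume each $f_i$ is defined and differentiable on all of $\R$, so $g$ can be taken to have exactly the piecewise structure of $f$ on $(-\infty,x_m]$ with its natural extensions), and right piece $f_2:=f_m$. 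Proposition \ref{prop:pieces} then produces the three outer branches of the formulas ($P_rf_m$ and $e_rf_m$ for $x$ large; $x_m$ and $f_m(x_m)+\tfrac r2(x_m-x)^2$ on the knot strip at $x_m$; and $P_rg$, $e_rg$ below), and the inductive hypothesis applied to $g$ — which has $m-1$ knots — fills in all the remaining branches. The one point requiring a sentence of care is that $P_rf(x)=P_rg(x)$ whenever $x<x_m+\tfrac1r f_{m-1}'(x_m)$: this holds because in that range the global minimizer of $f+\tfrac r2(\cdot-x)^2$ is already $\leq x_m$, and on $(-\infty,x_m]$ the functions $f$ and $g$ coincide, so the two minimization problems have the same solution.

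The main obstacle, as in Proposition \ref{prop:pieces}, is not any computation but the \emph{gluing/consistency} argument: one must verify that the intervals in the case split genuinely partition $\R$ with no gaps and no overlaps, which rests entirely on the monotonicity chain $f_{i-1}'(x_i)\leq f_i'(x_i)\leq f_i'(x_{i+1})\leq f_{i+1}'(x_{i+1})$ coming from convexity of $f$ and of each $f_i$, and that the boundary values of the piecewise $e_rf$ formula match up across every interval endpoint (continuity of $e_rf$, which is automatic since $f\in\Gamma_0(\R)$, but should be seen to be consistent with the explicit formula via $f_{i-1}(x_i)=f_i(x_i)$ and $P_rf_i(x_i+\tfrac1r f_i'(x_i))=x_i$). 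Everything else is a routine repetition of the three-case analysis of Proposition \ref{prop:pieces}, which is why "the same manner of proof" is an accurate description and a full reproduction of the details can reasonably be omitted.
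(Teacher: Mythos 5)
Your argument is essentially correct, but it is packaged differently from the paper. The paper's own proof is direct: it splits $e_rf(x)$ into the $m+1$ constrained infima (the multi-knot analogue of \eqref{eq:twoinfima}) and repeats, at each knot, the three-case analysis of Proposition \ref{prop:pieces}, using continuity ($f_{i-1}(x_i)=f_i(x_i)$) and monotonicity of $P_rf$ to see that the strips tile $\R$ consistently. Your primary device is instead the resolvent characterization $P_rf=\left(\Id+\frac{1}{r}\partial f\right)^{-1}$, reading the condition $x\in y^*+\frac{1}{r}\partial f(y^*)$ off the graph of $\partial f$ (smooth arcs glued to the vertical segments $\{x_i\}\times[f_{i-1}'(x_i),f_i'(x_i)]$); this is a complete and arguably cleaner argument, and it is in fact the method the paper uses for Theorem \ref{thm:inf} rather than for this corollary, so it buys a uniform treatment of knot strips and open strips without case-by-case contradiction arguments. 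Your secondary device, the induction that peels off the last piece and applies Proposition \ref{prop:pieces} to the pair $(g,f_m)$, has a technical wrinkle you should acknowledge: the peeled-off left function $g$ retains the kinks at $x_1,\dots,x_{m-1}$, so it is not differentiable on all of $\R$ and Proposition \ref{prop:pieces} as stated does not literally apply to it; one must either prove a one-sided-derivative variant of the proposition (its proof goes through verbatim using the left derivative $f_{m-1}'(x_m)$ of $g$ at the knot and $\partial g(x_m)$ in place of $g'(x_m)$) or simply drop the induction and rely on your resolvent argument, which already covers all knots at once. With that repair, your proof is sound and matches the stated formulas, including the boundary consistency you correctly trace back to $f_{i-1}(x_i)=f_i(x_i)$ and the slope chain $f_{i-1}'(x_i)\leq f_i'(x_i)\leq f_i'(x_{i+1})\leq f_{i+1}'(x_{i+1})$.
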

\begin{proof}
In the definition of $f,$ we have that $f_{i-1}(x_i)=f_i(x_i),$ so that $f$ is continuous. Since $f$ is convex, $P_rf(x)$ is monotone. We split the Moreau envelope as follows,
\begin{align*}
e_rf(x)=\min\bigg[&\inf\limits_{y\leq x_1}\left\{f_0(y)+\frac{r}{2}(y-x)^2\right\},\\
&\inf\limits_{x_1\leq y\leq x_2}\left\{f_1(y)+\frac{r}{2}(y-x)^2\right\},\\
&\vdots\\
&\inf\limits_{x_m\leq y}\left\{f_m(y)+\frac{r}{2}(y-x)^2\right\}\bigg],
\end{align*}
The rest of the proof is identical in method to that of Proposition \ref{prop:pieces}.\end{proof}
\noindent The following is an example of Corollary \ref{pcubthm}, with a three-piece function.
\begin{ex}
Define $f:\R\to\R,$$$f(x)=\begin{cases}
-5x-2,&\mbox{if }x<-1,\\
(x-1)^2-1,&\mbox{if }-1\leq x\leq0,\\
x^3,&\mbox{if }x>0.
\end{cases}$$Then
$$P_rf(x)=\begin{cases}
x+\frac{5}{r},&\mbox{if }x<-1-\frac{5}{r},\\
-1,&\mbox{if }-1-\frac{5}{r}\leq x\leq-1-\frac{4}{r},\\
\frac{rx+2}{r+2},&\mbox{if }-1-\frac{4}{r}<x<-\frac{2}{r},\\
0,&\mbox{if }-\frac{2}{r}\leq x\leq0,\\
\frac{-r+\sqrt{r^2+12rx}}{6},&\mbox{if }x>0
\end{cases}$$and$$e_rf(x)=\begin{cases}
-5x-\frac{25}{2r}-2,&\mbox{if }x<-1-\frac{5}{r},\\
\frac{r}{2}(x+1)^2+3,&\mbox{if }-1-\frac{5}{r}\leq x\leq-1-\frac{4}{r},\\
\frac{r}{r+2}(x-1)^2-1,&\mbox{if }-1-\frac{4}{r}<x<-\frac{2}{r},\\
\frac{r}{2}x^2,&\mbox{if }-\frac{2}{r}\leq x\leq0,\\
\frac{r^3-r(r+12x)\sqrt{r^2+12rx}+18r^2x+54rx^2}{108},&\mbox{if }x>0.\end{cases}$$
\end{ex}
\begin{proof}
The proof is a matter of applying Corollary \ref{pcubthm} with $x_1=-1,$ $x_2=0,$ $f_0(x)=-5x-2,$ $f_1(x)=(x-1)^2-1$ and $f_2(x)=x^3.$ The algebra and calculus are elementary and are left to the reader as an exercise.
\end{proof}
\noindent Figure \ref{fig:threepieceex} presents $f$ and $e_rf$ for several values of $r.$
\begin{figure}[H]\begin{center}
\includegraphics[scale=0.3]{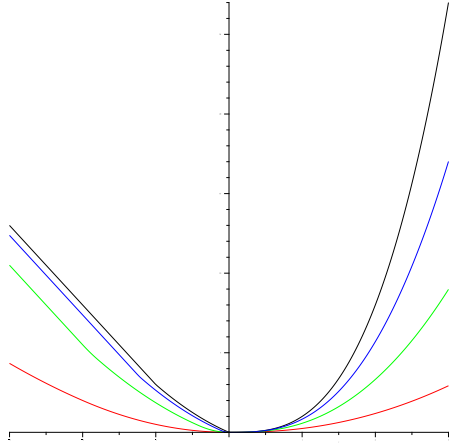}\end{center}
\caption{The functions $f$ (black) and $e_rf$ for $r=1$ (red), $5$ (green) and $20$ (blue).}\label{fig:threepieceex}
\end{figure}
\begin{thm}\label{thm:inf}
Let $f\in\Gamma_0(\R)$ be differentiable on $[a,b].$ Define $g\in\Gamma_0(\R)$ by
$$g(x)=\begin{cases}
f(x),&\mbox{\emph{if} }a\leq x\leq b,\\\infty,&\mbox{\emph{otherwise.}}
\end{cases}$$
Then
$$P_rg(x)=\begin{cases}
a,&\mbox{\emph{if} }x\leq a+\frac{1}{r}f'(a),\\
P_rf(x),&\mbox{\emph{if} }a+\frac{1}{r}f'(a)<x<b+\frac{1}{r}f'(b),\\
b,&\mbox{\emph{if} }b+\frac{1}{r}f'(b)\leq x,
\end{cases}$$ and
$$e_rg(x)=\begin{cases}
f(a)+\frac{r}{2}(a-x)^2,&\mbox{\emph{if} }x\leq a+\frac{1}{r}f'(a),\\
e_rf(x),&\mbox{\emph{if} }a+\frac{1}{r}f'(a)<x<b+\frac{1}{r}f'(b),\\
f(b)+\frac{r}{2}(b-x)^2,&\mbox{\emph{if} }b+\frac{1}{r}f'(b)\leq x.
\end{cases}$$
\end{thm}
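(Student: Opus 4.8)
The plan is to express $e_rg$ as a constrained one-dimensional minimization over $[a,b]$ and exploit monotonicity of the relevant derivative, exactly in the spirit of the proof of Proposition~\ref{prop:pieces}. Since $f$ is finite on $[a,b]$, we have $g=f+\iota_{[a,b]}\in\Gamma_0(\R)$ and
$$e_rg(x)=\inf_{y\in[a,b]}\phi_x(y),\qquad\phi_x(y):=f(y)+\frac{r}{2}(y-x)^2.$$
As $\frac{r}{2}(\cdot-x)^2$ is strictly convex and $\phi_x$ is continuous on the compact set $[a,b]$ (a convex function that is differentiable on an interval is $\mathcal{C}^1$ there), $\phi_x|_{[a,b]}$ has a unique minimizer $y_x=:P_rg(x)$, and $\phi_x'(y)=f'(y)+r(y-x)$ is strictly increasing on $[a,b]$. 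Convexity of $f$ gives $f'(a)\le f'(b)$, so $a+\frac{1}{r}f'(a)<b+\frac{1}{r}f'(b)$ and the three ranges of $x$ in the statement partition $\R$.

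I would then dispatch the two boundary cases. If $x\le a+\frac{1}{r}f'(a)$, then $\phi_x'(a)=f'(a)+r(a-x)\ge0$, hence $\phi_x'>0$ on $(a,b]$, so $\phi_x$ is strictly increasing on $[a,b]$; thus $y_x=a$ and $e_rg(x)=\phi_x(a)=f(a)+\frac{r}{2}(a-x)^2$. The case $x\ge b+\frac{1}{r}f'(b)$ is symmetric: there $\phi_x'(b)\le0$, so $\phi_x$ is strictly decreasing on $[a,b]$, $y_x=b$, and $e_rg(x)=f(b)+\frac{r}{2}(b-x)^2$.

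For the middle range $a+\frac{1}{r}f'(a)<x<b+\frac{1}{r}f'(b)$ we have $\phi_x'(a)<0<\phi_x'(b)$, so by continuity and strict monotonicity of $\phi_x'$ there is a unique $y_x\in(a,b)$ with $\phi_x'(y_x)=0$, and this minimizes $\phi_x$ over $[a,b]$. To replace ``$\argmin$ over $[a,b]$'' by $P_rf$ and $e_rf$, I would show $y_x$ is the unconstrained minimizer on $\R$ of $h:=f+\frac{r}{2}(\cdot-x)^2$: since $f$ is differentiable at $a$, so is $h$, with $h'(a)=\phi_x'(a)<0$, and the gradient inequality for the convex function $h$ gives, for $y<a$, $h(y)\ge h(a)+h'(a)(y-a)>h(a)\ge h(y_x)$; symmetrically $h(y)>h(y_x)$ for $y>b$; and $h=\phi_x\ge\phi_x(y_x)=h(y_x)$ on $[a,b]$. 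Hence $P_rf(x)=y_x=P_rg(x)$ and $e_rf(x)=h(y_x)=e_rg(x)$.

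The only delicate point is this last identification in the middle case: it is precisely the genuine differentiability of $f$ at the endpoints $a$ and $b$ (not mere subdifferentiability) that forces the unconstrained prox point of $f$ to stay inside $[a,b]$ throughout the open range $a+\frac{1}{r}f'(a)<x<b+\frac{1}{r}f'(b)$, which is what lets the boundary thresholds be stated in terms of $f'(a)$ and $f'(b)$ alone. An equivalent but more mechanical alternative avoids the gradient inequality: write the optimality condition $0\in f'(y_x)+r(y_x-x)+N_{[a,b]}(y_x)$ (the subdifferential sum rule applies since $[a,b]\subseteq\intt\dom f$) and read off the three cases from $N_{[a,b]}(a)=(-\infty,0]$, $N_{[a,b]}(y)=\{0\}$ for $y\in(a,b)$, and $N_{[a,b]}(b)=[0,\infty)$.
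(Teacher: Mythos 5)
Your argument is correct, but it takes a different route from the paper. The paper proves Theorem~\ref{thm:inf} by the resolvent identity $P_rg=\bigl(\Id+\tfrac{1}{r}\partial g\bigr)^{-1}$: it computes $\partial g$ explicitly (with the normal-cone terms $f'(a)+\R_-$ at $a$ and $f'(b)+\R_+$ at $b$), forms $x+\tfrac{1}{r}\partial g(x)$, and reads the three cases off the inverted graph; the envelope formula is then left implicit, obtained by substituting $P_rg(x)$ into $g(\cdot)+\tfrac{r}{2}(\cdot-x)^2$. That is essentially the ``mechanical alternative'' you sketch in your last paragraph via $0\in f'(y)+r(y-x)+N_{[a,b]}(y)$. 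Your main argument instead works directly with the constrained minimization $\inf_{y\in[a,b]}\phi_x(y)$ and the strict monotonicity of $\phi_x'$, which is closer in spirit to the proof of Proposition~\ref{prop:pieces}. What your route buys is that it is self-contained (no resolvent calculus needed) and that it makes explicit two points the paper glosses over: why in the middle range the constrained minimizer is also the unconstrained prox point of $f$, so that $P_rg(x)=P_rf(x)$ and $e_rg(x)=e_rf(x)$ (your gradient-inequality step, which indeed uses genuine differentiability of $f$ at $a$ and $b$), and where the stated envelope values in the boundary ranges come from. What the paper's route buys is brevity: once $\partial g$ is written down, the piecewise formula for $P_rg$ is immediate from inverting a monotone graph, with no case-by-case sign analysis.
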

\begin{proof} We use the fact that $P_rg=\left(\Id+\frac{1}{r}\partial g\right)^{-1}$ \cite[Example 23.3]{convmono}. Find the subdifferential of $g:$
$$\partial g(x)=\begin{cases}
f'(x),&\mbox{if }a<x<b,\\
f'(a)+\R_-,&\mbox{if }x=a,\\
f'(b)+\R_+,&\mbox{if }x=b,\\
\varnothing,&\mbox{otherwise.}
\end{cases}$$
Multiplying by $\frac{1}{r}$ and adding the identity function, we obtain
$$x+\frac{1}{r}\partial g(x)=\begin{cases}
x+\frac{1}{r}f'(x),&\mbox{if }a<x<b,\\
a+\frac{1}{r}f'(a)+\R_-,&\mbox{if }x=a,\\
b+\frac{1}{r}f'(b)+\R_+,&\mbox{if }x=b,\\
\varnothing,&\mbox{otherwise.}
\end{cases}$$
Now applying the identity $P_rg(x)=\left(\Id+\frac{1}{r}\partial g\right)^{-1}(x),$ we find
\begin{equation*}
P_rg(x)=\begin{cases}
P_rf(x),&\mbox{if }a+\frac{1}{r}f'(a)<x<b+\frac{1}{r}f'(b),\\
a,&\mbox{if }a+\frac{1}{r}f'(a)\geq x,\\
b,&\mbox{if }b+\frac{1}{r}f'(b)\leq x.
\end{cases}\end{equation*}
\end{proof}
\begin{ex}
Define
$$g(x)=\begin{cases}x,&\mbox{if }-1\leq x\leq2,\\\infty,&\mbox{otherwise.}\end{cases}$$Then by Theorem \ref{thm:inf},
$$e_rg(x)=\begin{cases}-1+\frac{r}{2}(-1-x)^2,&\mbox{if }x\leq-1+\frac{1}{r},\\
x-\frac{1}{2r},&\mbox{if }-1+\frac{1}{2}<x\leq2+\frac{1}{r},\\
2+\frac{r}{2}(2-x)^2,&\mbox{if }x>2+\frac{1}{r}.\end{cases}$$
\end{ex}
\begin{figure}[H]\begin{center}
\includegraphics[scale=0.2]{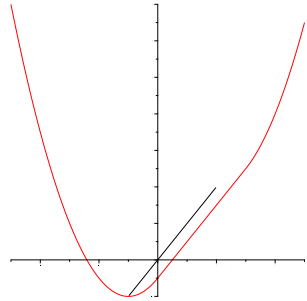}\end{center}
\caption{The functions $g$ (black) and $e_1g$ (red).}
\end{figure}

%%%%%
\section{The Moreau envelope of piecewise cubic functions}\label{sec:pcf}
%%%%%

In this section, we concentrate our efforts on the class of univariate, piecewise cubic functions.

%%%
\subsection{Motivation}
%%%

Piecewise polynomial functions are of great interest in current research because they are commonly used in mathematical modelling, and thus in many optimization algorithms that require a relatively simple approximation function. Convex piecewise functions in general, and their Moreau envelopes, are explored in \cite{meng2001piecewise,mifflin1999properties} and similar works. Properties of piecewise linear-quadratic (PLQ) functions in particular, and their Moreau envelopes, are developed in \cite{bajaj2017visualization,convhullalg,plqmodel} and others. The new theory of piecewise cubic functions found in this section will enable the expansion of such works to polynomials of one degree higher, and any result developed here reverts to the piecewise linear-quadratic case by setting the cubic coefficients to zero. Matters such as interpolation for discrete transforms, closedness under Moreau envelope, and efficiency of Moreau envelope algorithms that are analyzed in \cite{plqmodel} for PLQ functions can now be extended to the piecewise cubic case, as can the PLQ Toolbox software found in \cite[\S7]{plqmodel}. Indeed, it is our intention that many applications and algorithms that currently use PLQ functions as their basis will become applicable to a broader range of useful situations due to expansion to the piecewise-cubic setting.

%%%
\subsection{Convexity}
%%%

We begin with the definition and a lemma that characterizes when a piecewise cubic function is convex.
\begin{defn}\label{pcub}
A function $f:\R\rightarrow\R$ is called \emph{piecewise cubic} if $\dom f$ can be represented as the union of finitely many closed intervals, relative to each of which $f(x)$ is given by an expression of the form $ax^3+bx^2+cx+d$ with $a,b,c,d\in\R.$
\end{defn}
\begin{prop}\label{propcont}
If a function $f:\R\rightarrow\R$ is piecewise cubic, then $\dom f$ is closed and $f$ is continuous relative to $\dom f,$ hence lsc on $\R.$
\end{prop}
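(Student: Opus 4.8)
The plan is to unpack Definition \ref{pcub} directly and argue about each of the finitely many intervals. Write $\dom f = \bigcup_{i=1}^N I_i$, where each $I_i$ is a closed interval and $f$ agrees on $I_i$ with a cubic polynomial $p_i(x) = a_i x^3 + b_i x^2 + c_i x + d_i$. A finite union of closed sets is closed, so $\dom f$ is closed immediately. That disposes of the first assertion.

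For continuity relative to $\dom f$, fix a point $x_0 \in \dom f$ and a sequence $x_k \to x_0$ with $x_k \in \dom f$. Each $x_k$ lies in at least one of the $I_i$; since there are only finitely many intervals, by passing to a subsequence I may assume all $x_k$ lie in a single interval $I_j$. Because $I_j$ is closed and contains $x_0$ in its closure (it contains the limit of a sequence from $I_j$), we get $x_0 \in I_j$, and then $f(x_k) = p_j(x_k) \to p_j(x_0) = f(x_0)$ by continuity of the polynomial $p_j$. Since every subsequence of $\{f(x_k)\}$ has a further subsequence converging to $f(x_0)$, the whole sequence converges to $f(x_0)$. Hence $f$ is continuous relative to $\dom f$. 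One should note here that where two intervals overlap the defining cubics must agree (otherwise $f$ would not be well-defined as a function), so there is no ambiguity in the values $p_j(x_0)$.

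Finally, lower semicontinuity on all of $\R$: a function that is continuous relative to a closed set $D$ and equals $+\infty$ off $D$ is lsc on $\R$. Concretely, for $x_0 \in \dom f$ the relative continuity gives $\liminf_{x \to x_0} f(x) \ge f(x_0)$ (the liminf being over $x \in \R$; points outside $\dom f$ contribute $+\infty$ and do not lower it, while points inside $\dom f$ near $x_0$ give values near $f(x_0)$). For $x_0 \notin \dom f$, since $\dom f$ is closed there is a neighbourhood of $x_0$ on which $f \equiv +\infty$, so $\liminf_{x\to x_0} f(x) = +\infty = f(x_0)$ trivially. Thus $f$ is lsc at every point of $\R$.

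I do not anticipate a genuine obstacle; the only subtlety worth spelling out is the finiteness-of-intervals pigeonhole step that lets one reduce a convergent sequence to one staying inside a single piece, together with the observation that closedness of that piece forces the limit point to lie in it. Everything else is the standard fact that a proper function which is $+\infty$ outside a closed set and continuous relative to that set is lsc.
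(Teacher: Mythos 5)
Your proof is correct. The paper does not spell out an argument at all -- it simply says the proof is the same as that of \cite[Proposition 10.21]{rockwets} for piecewise linear-quadratic functions -- and your argument is exactly that standard argument specialized to one dimension: a finite union of closed intervals is closed, polynomial continuity on each piece combined with the pigeonhole/subsequence step gives continuity relative to $\dom f$ (with the well-definedness remark correctly covering overlapping pieces), and continuity relative to a closed domain with the value $+\infty$ outside yields lower semicontinuity on all of $\R$.
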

\begin{proof}
The proof is the same as that of \cite[Proposition 10.21]{rockwets}.
\end{proof}
\begin{lem}\label{lem:piecewisecubic}
For $i=1,2,\ldots,m,$ let $f_i$ be a cubic, full-domain function on $\R,$
$$f_i(x)=a_ix^3+b_ix^2+c_ix+d_i.$$
For $i=1,2,\ldots,m-1,$ let $\{x_i\}$ be in increasing order, $x_1<x_2<\cdots<x_{m-1},$ such that
$$f_i(x_i)=f_{i+1}(x_i).$$
Define the subdomains
$$D_1=(-\infty,x_1], D_2=[x_1,x_2], \ldots, D_{m-1}=[x_{m-2},x_{m-1}], D_m=[x_{m-1},\infty).$$
Then the function $f$ defined by
$$f(x)=\begin{cases}
f_1(x),&\mbox{\emph{if} }x\in D_1,\\
f_2(x),&\mbox{\emph{if} }x\in D_2,\\
&\vdots\\
f_{m-1}(x),&\mbox{\emph{if} }x\in D_{m-1},\\
f_m(x),&\mbox{\emph{if} }x\in D_m
\end{cases}$$
is a continuous, piecewise cubic function. Moreover, $f$ is convex if and only if
\begin{itemize}
\item[(i)] $f_i$ is convex on $D_i$ for each $i,$ and
\item[(ii)] $f_i'(x_i)\leq f_{i+1}'(x_i)$ for each $i<m.$
\end{itemize}
\end{lem}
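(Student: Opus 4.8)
The plan is to verify the ``continuous, piecewise cubic'' claim first (it is essentially immediate), then establish the equivalence by a standard characterization of convexity of a function built from convex pieces that agree at the breakpoints. Continuity follows from the hypothesis $f_i(x_i)=f_{i+1}(x_i)$, so the two formulas glue at each $x_i$; that $f$ is piecewise cubic is immediate from Definition \ref{pcub} once we note $\dom f = \R$ is the union of the closed intervals $D_1,\dots,D_m$. So the content is the ``$f$ convex $\iff$ (i) and (ii)'' equivalence.

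For the forward direction, suppose $f$ is convex. Then the restriction of $f$ to each $D_i$ is convex, which is exactly (i). For (ii), I would use the fact that a convex function on $\R$ has a nondecreasing (and well-defined, possibly set-valued only at countably many points) derivative; here each $f_i$ is smooth on a neighbourhood of $x_i$, so $f$ has one-sided derivatives at $x_i$ given by $f_i'(x_i)$ from the left and $f_{i+1}'(x_i)$ from the right. Convexity of $f$ forces the left derivative to be $\le$ the right derivative at every point, giving $f_i'(x_i)\le f_{i+1}'(x_i)$, i.e.\ (ii). Equivalently one can phrase this via $\partial f(x_i)=[f_i'(x_i), f_{i+1}'(x_i)]$ being nonempty.

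For the converse, assume (i) and (ii). The cleanest route is to show the subdifferential inequality / monotonicity of the derivative globally. On the interior of each $D_i$, $f' = f_i'$ is nondecreasing by (i). At each breakpoint $x_i$, condition (ii) says $f'$ does not jump downward: $\lim_{x\uparrow x_i} f'(x) = f_i'(x_i) \le f_{i+1}'(x_i) = \lim_{x\downarrow x_i} f'(x)$. Hence $f'$ is nondecreasing on all of $\R$ (it is continuous on each open piece, nondecreasing there, and has only upward jumps at the finitely many breakpoints), and a continuous function on $\R$ with nondecreasing derivative on $\R\setminus\{x_1,\dots,x_{m-1}\}$ is convex. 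Alternatively, one can run an induction on $m$ using Proposition \ref{prop:pieces}'s opening observation (a two-piece glue of convex differentiable pieces with matching value and ordered slopes is convex): splitting $f$ at $x_{m-1}$ into its restriction to $(-\infty, x_{m-1}]$ (convex by the inductive hypothesis applied to $f_1,\dots,f_{m-1}$) and $f_m$ on $[x_{m-1},\infty)$, with $f'_{m-1}(x_{m-1}) \le f_m'(x_{m-1})$ from (ii), yields convexity of $f$.

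The only mildly delicate point is the converse, specifically justifying that ``convex on each closed piece plus ordered one-sided slopes at the breakpoints'' upgrades to global convexity; this is where one must be careful that convexity on the closed intervals $D_i$ (not just their interiors) together with the matching-value hypothesis is what makes the gluing work, and that the finitely many breakpoints cause no trouble. Neither direction presents a real obstacle — the argument is a routine reduction to monotonicity of the derivative — so the ``hard part'' is merely writing the breakpoint bookkeeping cleanly, and I would lean on Proposition \ref{prop:pieces} (or its two-piece observation) plus induction to keep that bookkeeping short.
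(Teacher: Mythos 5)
Your argument is correct and follows essentially the same route as the paper: both directions reduce to monotonicity of the derivative/subdifferential of $f$, with convexity of each piece giving monotonicity on the interiors and the ordered one-sided slopes $f_i'(x_i)\leq f_{i+1}'(x_i)$ handling the breakpoints (the paper packages this as monotonicity of $\partial f$ with $\partial f(x_i)=[f_i'(x_i),f_{i+1}'(x_i)]$, and proves the forward direction by contradiction, but the content is the same). One minor caution: your parenthetical alternative attributes to Proposition \ref{prop:pieces} a converse two-piece gluing fact that the proposition does not state (it assumes the glued function is convex), so that route would require proving the two-piece claim separately -- your primary monotone-derivative argument does not need it.
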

\begin{proof} By Proposition \ref{propcont}, $f$ is a continuous, piecewise cubic function.\medskip\\$(\Leftarrow)$  Suppose that each $f_i$ is convex on $D_i$ and that $f_i'(x_i)\leq f_{i+1}'(x_i)$ for each $i<m.$ Since $f_i$ is convex and smooth on $\intt D_i$ for each $i$, we have that for each $i:$
\begin{itemize}
\item[(a)] $f_i'$ is monotone on $\intt D_i,$
\item[(b)] $f_i'(x_i)=\sup\limits_{x\in\intt D_i}f_i'(x)$ (by point (a), and because $f_i$ is polynomial $f_i'$ is continuous, $f'_i$ is an increasing function), and
\item[(c)] $f_{i+1}'(x_i)=\inf\limits_{x\in\intt D_{i+1}}f'_{i+1}(x)$ (by point (a) and continuity of $f_i'$).
\end{itemize}
Then at each $x_i,$ the subdifferential of $f$ is the convex hull of $f_i'(x_i)$ and $f_{i+1}'(x_i):$
\begin{itemize}
\item[(d)] $\partial f(x_i)=[f_i'(x_i),f_{i+1}'(x_i)].$
\end{itemize}
Points (a), (b), (c), and (d) above give us that $\partial f$ is monotone over its domain. Therefore, $f$ is convex.\medskip\\
$(\Rightarrow)$ Suppose that $f$ is convex. It is clear that if $f_i$ is not convex on $D_i$ for some $i,$ then $f$ is not convex and we have a contradiction. Hence, $f_i$ is convex on $D_i$ for each $i,$ and point (i) is true. Suppose for eventual contradiction that $f'_{i+1}(x_i)<f'_i(x_i)$ for some $i<m.$ Since point (i) is true, point (a) and hence point (b) are also true. Thus, since $f'_i$ is a continuous function on $D_i,$ there exists $x\in\intt D_i$ such that $f_i'(x)>f_{i+1}'(x_i).$ Since $x<x_i,$ we have that $\partial f$ is not monotone. Hence, $f$ is not convex, a contradiction. Therefore, $f_i'(x_i)\leq f_{i+1}'(x_i)$ for all $i<m.$
\end{proof}

%%%
\subsection{Examples}
%%%

It will be helpful to see how the Moreau envelopes of certain piecewise cubic functions behave graphically. Visualizing a few simple functions and their Moreau envelopes points the way to the main results in the next section.
\begin{ex}
Let $x_1=-1,$ $x_2=1.$ Define
$$f_0(x)=-2x^3+2x^2+2x+3,~f_1(x)=x^3+3x^2-x+2,~f_2(x)=3x^3+2x^2+2x-2,$$
$$f(x)=\begin{cases}
f_0(x),&\mbox{if }x<x_1\\
f_1(x),&\mbox{if }x_1\leq x<x_2,\\
f_2(x),&\mbox{if }x_2\leq x.
\end{cases}$$
\end{ex}
\noindent It is left to the reader to verify that $f$ is convex. Notice that $x_1$ and $x_2$ are points of nondifferentiability. We find that
\begin{align*}
x_1+\frac{1}{r}f_0'(x_1)&=-1-\frac{8}{r},&x_1+\frac{1}{r}f_1'(x_1)&=-1-\frac{4}{r},\\
x_2+\frac{1}{r}f_1'(x_2)&=1+\frac{8}{r},&x_2+\frac{1}{r}f_2'(x_2)&=1+\frac{15}{r}.\\
\end{align*}
Then according to Corollary \ref{pcubthm}, we have
$$P_rf(x)=\begin{cases}
p_1,&\mbox{if }x<-1-\frac{8}{r},\\
x_1,&\mbox{if }-1-\frac{8}{r}\leq x\leq-1-\frac{4}{r},\\
p_2,&\mbox{if }-1-\frac{4}{r}<x<1+\frac{8}{r},\\
x_2,&\mbox{if }1+\frac{8}{r}\leq x\leq1+\frac{15}{r},\\
p_3,&\mbox{if }1+\frac{15}{r}<x,
\end{cases}$$
where
\begin{align*}
p_1&=\frac{(4+r)-\sqrt{(4+r)^2+24(2-rx)}}{12},
p_2=\frac{-(6+r)+\sqrt{(6+r)^2+12(1+rx)}}{6},\\
p_3&=\frac{-(4+r)+\sqrt{(4+r)^2-36(2-rx)}}{18}.
\end{align*}
\begin{rem}\label{rem:choice}Note that in finding the proximal points of convex cubic functions, setting the derivative of the infimand of the Moreau envelope expression equal to zero and solving yields two points (positive and negative square root). However, the proximal mapping is strictly monotone and only one of the two points will be in the appropriate domain. The method for choosing the correct proximal point is laid out in Proposition \ref{prop:twopoints}.\end{rem}
\noindent As an illustration of Remark \ref{rem:choice}, consider our choice of $p_2$ above. The counterpart of $p_2$ has a negative square root, but $p_2$ is correct as given. It is easy to see that $p_2\in[x_1,x_2],$ by noting that $p_2(x)$ is an increasing function of $x$ and observing that $p_2(-1-4/r)=x_1$ and $p_2(1+8/r)=x_2.$ The proper choices of $p_1$ and $p_3$ are made in a similar manner. This method is presented in general form in Proposition \ref{prop:twopoints}. Then we have
$$e_rf(x)=\begin{cases}
-2p_1^3+2p_1^2+2p_1+3+\frac{r}{2}(p_1-x)^2,&\mbox{if }x<-1-\frac{8}{r},\\
5+\frac{r}{2}(-1-x)^2,&\mbox{if }-1-\frac{8}{r}\leq x\leq-1-\frac{4}{r},\\
p_2^3+3p_2^2-p_2+2+\frac{r}{2}(p_2-x)^2,&\mbox{if }-1-\frac{4}{r}<x<1+\frac{8}{r},\\
5+\frac{r}{2}(1-x)^2,&\mbox{if }1+\frac{8}{r}\leq x\leq1+\frac{15}{r},\\
3p_3^3+2p_3^2+2p_3-2+\frac{r}{2}(p_3-x)^2,&\mbox{if }1+\frac{15}{r}<x.
\end{cases}$$
\begin{figure}[H]\begin{center}
\includegraphics[scale=0.22]{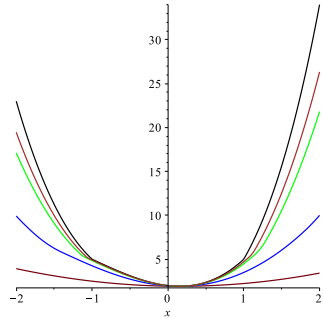}\end{center}
\caption{The functions $f(x)$ (black) and $e_rf(x)$ for $r=1,10,50,100.$}
\end{figure}
\begin{ex}\label{examplecubicquadratic}
Let $f:\R\rightarrow\R,$ $f(x)=\begin{cases}x^2,&\mbox{if }x\leq0,\\x^3,&\mbox{if }x>0.\end{cases}$\\
Then
$$P_rf(x)=\begin{cases}
\frac{rx}{r+2},&\mbox{if }x\leq0,\\\frac{-r+\sqrt{r^2+12rx}}{6},&\mbox{if }x>0,
\end{cases}$$ and
$$e_rf(x)=\begin{cases}
\frac{rx^2}{r+2},&\mbox{if }x\leq0,\\\left(\frac{-r+\sqrt{r^2+12rx}}{6}\right)^3+\frac{r}{2}\left(\frac{-r+\sqrt{r^2+12rx}}{6}-x\right)^2,&\mbox{if }x>0.
\end{cases}$$
\end{ex}
\begin{proof} The Moreau envelope is
\begin{align*}
e_rf(x)&=\inf\limits_{y\in\R}\left\{f(y)+\frac{r}{2}(y-x)^2\right\}\\
&=\min\left[\inf\limits_{y\leq0}\left\{y^2+\frac{r}{2}(y-x)^2\right\},\inf\limits_{y>0}\left\{y^3+\frac{r}{2}(y-x)^2\right\}\right].
\end{align*}
\noindent(i) Let $x\leq0.$ Then, with the restriction $y>0,$ $y^3+\frac{r}{2}(y-x)^2$ is minimized at $y=0,$ so that
$$\inf\limits_{y>0}\left\{y^3+\frac{r}{2}(y-x)^2\right\}=\frac{r}{2}x^2.$$ For the other infimum, setting the derivative of its argument equal to zero yields a minimizer of $y=\frac{rx}{r+2},$ so that
$$\inf\limits_{y\leq0}\left\{y^2+\frac{r}{2}(y-x)^2\right\}=\frac{rx^2}{r+2}=e_rf(x).$$
\noindent(ii) Let $x>0.$ Then, with the restriction $y\leq0,$ $y^2+\frac{r}{2}(y-x)^2$ is minimized at $y=0,$ so that
$$\inf\limits_{y\leq0}\left\{y^2+\frac{r}{2}(y-x)^2\right\}=\frac{r}{2}x^2.$$ For the other infimum, setting the derivative of its argument equal to zero yields a minimizer of $y=\frac{-r+\sqrt{r^2+12rx}}{6}$ (see Remark \ref{rem:choice}), so that
\begin{equation}\label{cubelessthan}
\inf\limits_{y>0}\left\{y^3+\frac{r}{2}(y-x)^2\right\}=\left(\frac{-r+\sqrt{r^2+12rx}}{6}\right)^3+\frac{r}{2}\left(\frac{-r+\sqrt{r^2+12rx}}{6}-x\right)^2,
\end{equation}
which is less than $\frac{r}{2}x^2$ for all $x>0.$ This can be seen by subtracting $\frac{r}{2}x^2$ from the right-hand side of \ref{cubelessthan}, and using calculus to show that the maximum of the resulting function is zero. The statement of the example follows.
\end{proof}
\noindent Figure \ref{cubesquare} illustrates the result for $r=1.$
\begin{figure}
\begin{center}\includegraphics[scale=0.22]{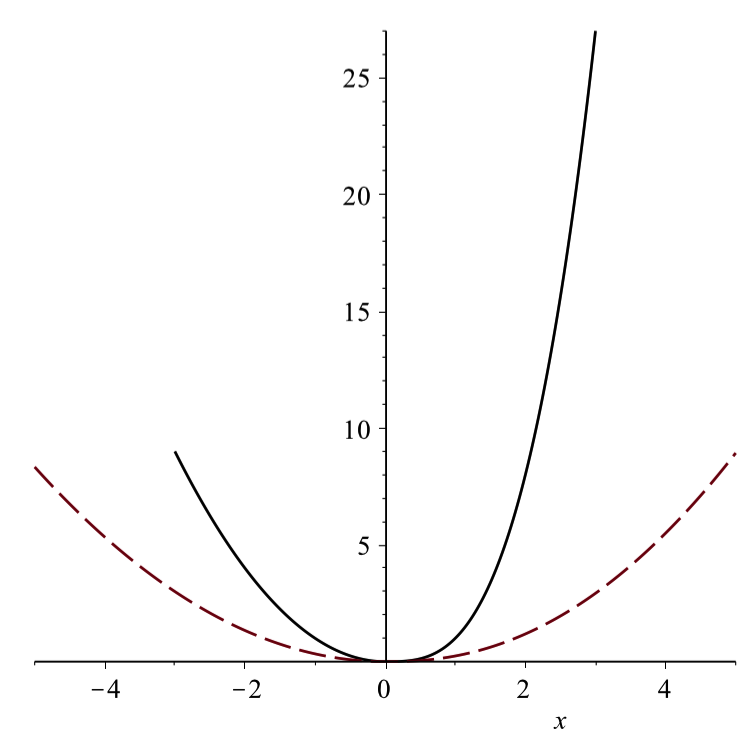}\end{center}
\caption{$f(x)$ (black), $e_1f(x)$ (red)}
\label{cubesquare}
\end{figure}
\noindent This result is perhaps surprising at first glance, since we know that as $r\nearrow\infty$ we must have $e_rf\nearrow f.$ This leads us to suspect that the Moreau envelope on the cubic portion of the function will be a cubic function, but the highest power of $x$ in the Moreau envelope is $2.$ The following proves that this envelope does indeed converge to $x^3.$ We have
\footnotesize\begin{align*}
&\lim\limits_{r\nearrow\infty}\left[\left(\frac{-r+\sqrt{r^2+12rx}}{6}\right)^3+\frac{1}{2}\left(\frac{-r+\sqrt{r^2+12rx}}{6}-x\right)^2\right]\\
=&\lim\limits_{r\nearrow\infty}\frac{-4r^3-36r^2x+6r^2+72rx+108x^2+(4r^2+12rx-6r-36x)\sqrt{r^2+12rx}}{216}\\
=&\lim\limits_{r\nearrow\infty}\frac{2x^3(4r^3-6r^2-27x)}{4r^3+36r^2x-6r^2-72rx-108x^2+(4r^2+12rx-6r-36x)\sqrt{r^2+12rx}}\\
=&\lim\limits_{r\nearrow\infty}\frac{2x^3\left(4-\frac{6}{r}-\frac{27x}{r^3}\right)}{4+\frac{36x}{r}-\frac{6}{r}-\frac{72x}{r^2}-\frac{108x^2}{r^3}+\left(4+\frac{12x}{r}-\frac{6}{r}-\frac{36x}{r^2}\sqrt{1+\frac{12x}{r}}\right)}\\
=&\frac{2x^3\cdot4}{4+4\sqrt{1}}=x^3.
\end{align*}\normalsize
\begin{ex}\label{cubeex}
Let $f:\R\rightarrow\R,$ $f(x)=|x|^3.$ Then
$$P_rf(x)=\begin{cases}
\frac{r-\sqrt{r^2-12rx}}{6},&\mbox{if }x<0,\\\frac{-r+\sqrt{r^2+12rx}}{6},&\mbox{if }x\geq0,
\end{cases}$$ and
$$e_rf(x)=\begin{cases}
\left(\frac{-r+\sqrt{r^2-12rx}}{6}\right)^3+\frac{r}{2}\left(\frac{r-\sqrt{r^2-12rx}}{6}-x\right)^2,&\mbox{if }x<0,\\
\left(\frac{-r+\sqrt{r^2+12rx}}{6}\right)^3+\frac{r}{2}\left(\frac{-r+\sqrt{r^2+12rx}}{6}-x\right)^2,&\mbox{if }x\geq0.
\end{cases}$$
\end{ex}
\begin{proof}The Moreau envelope is
\begin{align*}
e_rf(x)&=\inf\limits_{y\in\R}\left\{f(y)+\frac{r}{2}(y-x)^2\right\}\\
&=\min\left[\inf\limits_{y\leq0}\left\{-y^3+\frac{r}{2}(y-x)^2\right\},\inf\limits_{y>0}\left\{y^3+\frac{r}{2}(y-x)^2\right\}\right].
\end{align*}
By an argument identical to that of the previous example, we find that for $x\geq0,$
$$e_rf(x)=\left(\frac{-r+\sqrt{r^2+12rx}}{6}\right)^3+\frac{r}{2}\left(\frac{-r+\sqrt{r^2+12rx}}{6}-x\right)^2.$$
Then by Lemma \ref{evenlem}, we conclude the statement of the example.
\end{proof}
\begin{ex}
Let $f:\R\rightarrow\R,$ $f(x)=|x|^3+ax.$ Then
$$P_rf(x)=\begin{cases}
\frac{r-\sqrt{r^2-12(rx-a)}}{6},&\mbox{if }x<\frac{a}{r},\\\frac{-r+\sqrt{r^2+12(rx-a)}}{6},&\mbox{if }x\geq\frac{a}{r},
\end{cases}$$ and \footnotesize
$$e_rf(x)=\begin{cases}\left(\frac{-r+\sqrt{r^2-12(rx-a)}}{6}\right)^3+\frac{r}{2}\left(\frac{-r+\sqrt{r^2-12(rx-a)}}{6}+x-\frac{a}{r}\right)^2+ax-\frac{a^2}{2r},&\mbox{if }x<\frac{a}{r}\\
\left(\frac{-r+\sqrt{r^2+12(xr-a)}}{6}\right)^3+\frac{r}{2}\left(\frac{-r+\sqrt{r^2+12(xr-a)}}{6}-x+\frac{a}{r}\right)^2+ax-\frac{a^2}{2r},&\mbox{if }x\geq\frac{a}{r}.
\end{cases}$$\normalsize
\end{ex}
\begin{proof}
The proof is found by applying Fact \ref{affineshift} to Example \ref{cubeex}.\end{proof}

%%%
\subsection{Main result}
%%%

The examples of the previous section suggest a theorem for the case of a general convex cubic function on $\R$. The theorem is the following.
\begin{thm}\label{thm:pcf} Let $f:\R\rightarrow\R,$ $f(x)=a|x|^3+bx^2+cx+d,$ with $a,b\geq0.$ Define
\begin{align*}
p_1&=\frac{r+2b-\sqrt{(r+2b)^2-12a(rx-c)}}{6a},\\
p_2&=\frac{-r-2b+\sqrt{(r+2b)^2+12a(rx-c)}}{6a}.\end{align*}
Then the proximal mapping and Moreau envelope of $f$ are
\begin{align*}
P_rf(x)&=\begin{cases}
p_1,&\mbox{\emph{if} }x<\frac{c}{r},\\p_2,&\mbox{\emph{if} }x\geq\frac{c}{r},
\end{cases}\\
e_rf(x)&=\begin{cases}
-ap_1^3+bp_1^2+d-p_1(rx-c)+\frac{r}{2}(p_1^2+x^2),&\mbox{\emph{if} }x<\frac{c}{r},\\
ap_2^3+bp_2^2+d-p_2(rx-c)+\frac{r}{2}(p_2^2+x^2),&\mbox{\emph{if} }x\geq\frac{c}{r}.
\end{cases}\end{align*}
\end{thm}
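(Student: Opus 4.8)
The plan is to work through the resolvent identity $P_rf=\bigl(\Id+\tfrac1r\partial f\bigr)^{-1}$, which is valid for $f\in\Gamma_0(\R)$ and is exactly what drives Theorem \ref{thm:inf}. First I would record the structure of $f$: since $\tfrac{d^2}{dx^2}\bigl(a|x|^3\bigr)=6a|x|$, we get $f''(x)=6a|x|+2b\ge0$, so $f$ is convex and finite-valued, hence $f\in\Gamma_0(\R)$; moreover $f\in\mathcal{C}^2$ (only $\mathcal{C}^2$, because of the corner in the third derivative at $0$), so $f$ is differentiable everywhere, $\partial f=\{f'\}$, and $f'(x)=3ax|x|+2bx+c$ with $f'(0)=c$. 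In particular $f+\tfrac r2(\cdot-x)^2$ is strongly convex, so its unique minimizer $P_rf(x)$ is the unique $y$ solving $x=y+\tfrac1r f'(y)$. It is worth noting that Proposition \ref{prop:pieces} does not apply verbatim here, since the would-be pieces $\mp ay^3+by^2+cy+d$ are not convex on all of $\R$; but because $f$ is differentiable at the breakpoint $0$, the ``flat'' middle region of that proposition degenerates to the single point $x=c/r$, which is why the statement takes a clean two-branch form.

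Next I would solve $x=y+\tfrac1r f'(y)$ on each half-line. Writing $\phi(y):=f'(y)+ry$, note $\phi$ is continuous and strictly increasing (as $f''+r>0$) with $\phi(0)=c$, so the solution satisfies $y\ge0$ precisely when $rx=\phi(y)\ge c$, i.e.\ $x\ge c/r$, and $y\le0$ precisely when $x\le c/r$. On $\{y\ge0\}$ the equation reads $rx=ry+3ay^2+2by+c$, i.e.\ $3ay^2+(r+2b)y+(c-rx)=0$; its discriminant $(r+2b)^2+12a(rx-c)$ is $\ge(r+2b)^2>0$ when $x\ge c/r$ (here $a\ge0$, $b\ge0$, $r>0$ are used), and the nonnegative root is the larger one, which is exactly $p_2$. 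On $\{y\le0\}$ the equation reads $rx=-3ay^2+(r+2b)y+c$, i.e.\ $3ay^2-(r+2b)y+(rx-c)=0$, with discriminant $(r+2b)^2-12a(rx-c)\ge(r+2b)^2>0$ when $x\le c/r$, and the nonpositive root is the smaller one, which is exactly $p_1$. Since $p_1=p_2=0$ at $x=c/r$, the two branches agree at the junction, and uniqueness of the minimizer yields $P_rf(x)=p_1$ for $x<c/r$ and $P_rf(x)=p_2$ for $x\ge c/r$.

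The Moreau envelope then follows by substitution, since $e_rf(x)=f\bigl(P_rf(x)\bigr)+\tfrac r2\bigl(P_rf(x)-x\bigr)^2$. For $x<c/r$ we have $p_1\le0$, so $f(p_1)=-ap_1^3+bp_1^2+cp_1+d$; expanding $\tfrac r2(p_1-x)^2=\tfrac r2p_1^2-rp_1x+\tfrac r2x^2$ and collecting $cp_1-rp_1x=-p_1(rx-c)$ gives $e_rf(x)=-ap_1^3+bp_1^2+d-p_1(rx-c)+\tfrac r2(p_1^2+x^2)$, and the case $x\ge c/r$ is identical with $p_2\ge0$ and the sign of the cubic term flipped. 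I expect the only genuine obstacle to be the bookkeeping in the second step: pairing each quadratic with the correct half-line, selecting the sign of the square root that keeps the root in that half-line, and checking that the resulting case boundary is exactly $x=c/r$ — this is the one place where $a\ge0$, $b\ge0$ are actually used. I would also remark that the displayed formulae tacitly assume $a>0$ (they divide by $6a$); when $a=0$ the function is piecewise linear-quadratic and $p_2$ collapses, in the limit $a\to0$, to $(rx-c)/(r+2b)$, recovering the known PLQ result.
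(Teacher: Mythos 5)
Your proposal is correct, but it takes a different route from the paper. The paper reduces to the linear-term-free case: it first treats $g(x)=a|x|^3+bx^2+d$, computing $P_rg$ and $e_rg$ on $x<0$ by the split-infimum method of Example \ref{examplecubicquadratic}, extends to $x\geq0$ by the evenness result (Lemma \ref{evenlem}), and then reinstates the $cx$ term via the affine-shift identity of Fact \ref{affineshift}, $e_rf(x)=e_rg\left(x-\frac{c}{r}\right)+cx-\frac{c^2}{2r}$, which is also what produces the case boundary $x=c/r$. You instead invert the resolvent $P_rf=\left(\Id+\frac{1}{r}f'\right)^{-1}$ directly: the monotonicity of $\phi(y)=f'(y)+ry$ with $\phi(0)=c$ cleanly locates the prox point on the correct half-line, and the sign analysis of the two quadratic roots (the root-selection issue the paper relegates to Remark \ref{rem:choice} and Proposition \ref{prop:twopoints}) is handled explicitly; substitution into $f(P_rf(x))+\frac{r}{2}(P_rf(x)-x)^2$ then reproduces the stated formulas, and your algebra checks out, including agreement of the two branches at $x=c/r$. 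The paper's approach buys economy by recycling earlier machinery (evenness, affine shift, the worked example), while yours is self-contained, makes transparent why the middle ``constant prox'' region of Proposition \ref{prop:pieces} degenerates to the single point $c/r$ (differentiability of $f$ at $0$), and your closing observation that the displayed formulas tacitly require $a>0$, with the PLQ formula $(rx-c)/(r+2b)$ recovered as $a\to0$, is a worthwhile remark that the paper does not make.
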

\begin{proof}
We first consider $g(x)=a|x|^3+bx^2+d,$ and we use Lemma \ref{affineshift} to account for the $cx$ term later. By the same method as in Example \ref{examplecubicquadratic}, for $x<0$ we find that $$q_1=P_rg(x)=\frac{r+2b-\sqrt{(r+2b)^2-12arx}}{6a}$$ and
$$e_rg(x)=-aq_1^3+bq_1^2+d+\frac{r}{2}(q_1-x)^2.$$ Then by Lemma \ref{evenlem}, for $x\geq0$ we have that $$q_2=P_rg(x)=\frac{-r-2b+\sqrt{(r+2b)^2+12arx}}{6a}$$ and
$$e_rg(x)=aq_2^3+bq_2^2+d+\frac{r}{2}(q_2-x)^2.$$
Finally, Lemma \ref{affineshift} gives us that $e_rf(x)=e_rg\left(x-\frac{c}{r}\right)+cx-\frac{c^2}{2r},$ which yields the proximal mapping and Moreau envelope that we seek.
\end{proof}
\noindent Now we present the application of Corollary \ref{pcubthm} to convex piecewise cubic functions. First, we deal with the issue mentioned in Remark \ref{rem:choice}: making the proper choice of proximal point for a cubic piece.
\begin{prop}\label{prop:twopoints}
Let $f:\R\to\R$ be a convex piecewise cubic function (see Lemma \ref{lem:piecewisecubic}), with each piece $f_i$ defined by
$$f_i(x)=a_ix^3+b_ix^2+c_ix+d_i,\forall x\in\R.$$ Then on each subdomain $S_i=\left[x_i+\frac{1}{r}f_i'(x_i),x_{i+1}+\frac{1}{r}f_i'(x_{i+1})\right]$ (and setting $x_0=-\infty$ and $x_{m+1}=\infty$), the proximal point of $f_i$ is
$$ p_i=\frac{-(2b_i+r)+\sqrt{(2b_i+r)^2-12a_i(c_i-rx)}}{6a_i}.$$
\end{prop}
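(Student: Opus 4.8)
The plan is to reduce the statement to a first-order optimality condition and then identify the correct root by a second-derivative (convexity) test. Fix an index $i$ and a prox-centre $x\in S_i$. By Corollary \ref{pcubthm} (proved by the same argument as Proposition \ref{prop:pieces}), for such $x$ we have $P_rf(x)=P_rf_i(x)$, which for a genuine cubic piece is to be read as $\argmin_{y\in D_i}\{f_i(y)+\frac r2(y-x)^2\}$, where $D_i$ is the $i$-th subdomain; moreover this minimizer is a critical point of $\phi_i(y):=f_i(y)+\frac r2(y-x)^2$ lying in $D_i$. At the two endpoints of $S_i$ this is immediate, since $x=x_i+\frac1r f_i'(x_i)$ forces $\phi_i'(x_i)=f_i'(x_i)+r(x_i-x)=0$ and likewise $\phi_i'(x_{i+1})=0$ at the other endpoint, while for $x$ strictly inside $S_i$ it is exactly the interior-minimizer fact established in the proof of Proposition \ref{prop:pieces}. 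Since each cubic piece has $a_i\neq0$, the stationarity equation $\phi_i'(y)=3a_iy^2+(2b_i+r)y+(c_i-rx)=0$ is a genuine quadratic, so $P_rf(x)$ must equal one of
$$p_i^{\pm}=\frac{-(2b_i+r)\pm\sqrt{\Delta}}{6a_i},\qquad \Delta:=(2b_i+r)^2-12a_i(c_i-rx);$$
in particular $\Delta\geq0$, because a real minimizer exists, so no separate discriminant analysis is needed.

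To choose the right root I would invoke Lemma \ref{lem:piecewisecubic}(i): $f_i$ is convex on $D_i$, i.e. $f_i''(y)=6a_iy+2b_i\geq0$ for all $y\in D_i$, and since $r>0$ this yields $\phi_i''(y)=6a_iy+2b_i+r\geq0$ throughout $D_i$. Hence the minimizer has nonnegative second derivative. A direct substitution gives
$$\phi_i''(p_i^{+})=6a_i\,p_i^{+}+2b_i+r=\sqrt{\Delta}\geq0,\qquad \phi_i''(p_i^{-})=-\sqrt{\Delta}\leq0,$$
so when $\Delta>0$ the root $p_i^{-}$ cannot lie in $D_i$, forcing $P_rf(x)=p_i^{+}$, while when $\Delta=0$ the two roots coincide; either way $P_rf(x)=p_i^{+}=p_i$, as claimed. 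Notice that this test is blind to the sign of $a_i$: if $a_i<0$ then $p_i^{+}$ is in fact the smaller of the two roots, but it is still the one singled out by the condition $\phi_i''\geq0$, so the single formula for $p_i$ is correct for convex cubic pieces with a leading coefficient of either sign.

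The only real difficulty I anticipate is organizational: matching the indexing of the pieces $f_i$ and breakpoints $x_i$ used here to that of Corollary \ref{pcubthm}, and quoting from its proof that on $S_i$ the relevant minimizer is attained inside $D_i$. The analytic core — the identity $\phi_i''(p_i^{\pm})=\pm\sqrt{\Delta}$ together with $\phi_i''\geq0$ on $D_i$ — is very short. It is also worth remarking in passing that the excluded degenerate case $a_i=0$ (a piecewise linear--quadratic piece) is consistent with the formula: rationalizing the numerator gives $p_i^{+}=\dfrac{-2(c_i-rx)}{(2b_i+r)+\sqrt{\Delta}}$, which tends to $\dfrac{rx-c_i}{2b_i+r}$ as $a_i\to0$ — exactly the unique solution of the then-linear stationarity equation $\phi_i'(y)=0$.
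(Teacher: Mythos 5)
Your proof is correct, but it identifies the root by a different mechanism than the paper. The paper parametrizes the prox-centre: every $x\in S_i$ is written as $x=\tilde{x}+\tfrac{1}{r}f_i'(\tilde{x})$ with $\tilde{x}\in[x_i,x_{i+1}]$, and substituting this into the quadratic formula collapses the discriminant to the perfect square $(2b_i+r+6a_i\tilde{x})^2$; convexity of $f_i$ on the piece resolves the absolute value, and the plus root is seen to be exactly $\tilde{x}\in[x_i,x_{i+1}]$, hence the proximal point. Your route skips this substitution entirely and instead discriminates between the two stationary points by the sign of $\phi_i''$: the identity $\phi_i''(p_i^{\pm})=\pm\sqrt{\Delta}$ together with $\phi_i''\geq r>0$ on $D_i$ excludes $p_i^{-}$ from $D_i$ (or makes the roots coincide when $\Delta=0$). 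Both arguments lean on Proposition \ref{prop:pieces}/Corollary \ref{pcubthm} for the fact that on $S_i$ the proximal point of $f$ is a critical point of $\phi_i$ lying in $D_i$ — you make this dependence, including the endpoint cases of $S_i$, more explicit than the paper does. What the paper's substitution buys is extra structural information for free: it exhibits $P_rf(x)=\tilde{x}=(\Id+\tfrac{1}{r}f_i')^{-1}(x)$ in closed form and shows the discriminant is automatically a perfect square, whereas your convexity test is lighter on algebra, gets $\Delta\geq0$ without computation, handles the sign of $a_i$ transparently, and arguably closes more cleanly the small gap of ruling out the possibility that the other root also lies in $[x_i,x_{i+1}]$. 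Your closing remark on the limiting case $a_i\to0$ is a pleasant consistency check but is outside the hypotheses (the proposition's formula presumes $a_i\neq0$), so it is rightly an aside rather than part of the proof.
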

\begin{proof}
Recall from Lemma \ref{lem:piecewisecubic} that $\dom f_i=\R$ for each $i$. For $f_i,$ the proximal mapping is
$$P_rf_i(x)=\argmin\limits_{y\in\R}\left\{a_iy^3+b_iy^2+c_iy+d_i+\frac{r}{2}(y-x)^2\right\}.$$
Setting the derivative of the infimand equal to zero yields the potential proximal points:
$$3a_iy^2+2b_iy+c_i+ry-rx=0=3a_iy^2+(2b_i+r)y+(c_i-rx),$$
\begin{equation}\label{eq1}y=\frac{-(2b_i+r)\pm\sqrt{(2b_i+r)^2-12a_i(c_i-rx)}}{6a_i}.\end{equation} Notice that any $x\in S_i$ can be written as
$$x=\tilde{x}+\frac{1}{r}(3a_i\tilde{x}^2+2b_i\tilde{x}+c_i)\mbox{ for some }\tilde{x}\in[x_i,x_{i+1}].$$Substituting into \eqref{eq1} yields
\begin{equation}\label{plusminus}y=\frac{-(2b_i+r)\pm|2b_i+r+6a_i\tilde{x}|}{6a_i}\end{equation}
Since $f_i$ is convex on $[x_i,x_{i+1}],$ the second derivative is nonnegative: $6a_i\tilde{x}+2b_i\geq0$ for all $\tilde{x}\in[x_i,x_{i+1}].$ Thus, $|2b_i+r+6a_i\tilde{x}|=2b_i+r+6a_i\tilde{x}$ and the two points of \eqref{plusminus} are
\begin{align*}
p_i&=\frac{-(2b_i+r)+2b_i+r+6a_i\tilde{x}}{6a_i}=\tilde{x},\\
p_j&=\frac{-(2b_i+r)-2b_i-r-6a_i\tilde{x}}{6a_i}=-\tilde{x}-\frac{2b_i+r}{3a_i}.\end{align*}
Therefore, $p_i$ is the proximal point, since it lies in $[x_i,x_{i+1}].$ This corresponds to the positive square root of \eqref{eq1}, which gives us the statement of the proposition.
\end{proof}
\begin{cor}\label{cor:pc2}
Let $f:\R\to\R$ be a convex piecewise cubic function:
$$f(x)=\begin{cases}
f_0(x),&\mbox{\emph{if} }x\leq x_1,\\
f_1(x),&\mbox{\emph{if} }x_1\leq x\leq x_2,\\
&\vdots\\
f_m(x),&\mbox{\emph{if} }x_m\leq x,
\end{cases}$$
where
$$f_i(x)=a_ix^3+b_ix^2+c_ix+d_i,\qquad a_i,b_i,c_i,d_i\in\R.$$For each $i\in\{0,1,\ldots,m\},$ define
$$p_i=\frac{-(2b_i+r)+\sqrt{(2b_i+r)^2-12a_i(c_i-rx)}}{6a_i}.$$ Partition $\dom f$ as follows:
\begin{align*}
S_0&=\left(-\infty,x_1+\frac{1}{r}(3a_0x_1^2+2b_0x_1+c_0)\right),\\
S_1&=\left[x_1+\frac{1}{r}(3a_0x_1^2+2b_0x_1+c_0),x_1+\frac{1}{r}(3a_1x_1^2+2b_1x_1+c_1)\right],\\
S_2&=\left(x_1+\frac{1}{r}(3a_1x_1^2+2b_1x_1+c_1),x_2+\frac{1}{r}(3a_1x_2^2+2b_1x_2+c_1)\right),\\
&\vdots\\
S_{2m}&=\left(x_m+\frac{1}{r}(3a_mx_m^2+2b_mx_m+c_m),\infty\right).
\end{align*}
Then the proximal mapping and Moreau envelope of $f$ are
$$P_rf(x)=\begin{cases}
p_0,&\mbox{\emph{if} }x\in S_0,\\
x_1,&\mbox{\emph{if} }x\in S_1,\\
p_1,&\mbox{\emph{if} }x\in S_2,\\
&\vdots\\
p_m,&\mbox{\emph{if} }x\in S_{2m},
\end{cases} e_rf(x)=\begin{cases}
f_0(p_0)+\frac{r}{2}(p_0-x)^2,&\mbox{\emph{if} }x\in S_0,\\
f_1(x_1)+\frac{r}{2}(x_1-x)^2,&\mbox{\emph{if} }x\in S_1,\\
f_1(p_1)+\frac{r}{2}(p_1-x)^2,&\mbox{\emph{if} }x\in S_2,\\
&\vdots\\
f_m(p_m)+\frac{r}{2}(p_m-x)^2,&\mbox{\emph{if} }x\in S_{2m}.
\end{cases}$$
\end{cor}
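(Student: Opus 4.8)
The plan is to combine Corollary \ref{pcubthm} with Proposition \ref{prop:twopoints}. Since each piece $f_i(x)=a_ix^3+b_ix^2+c_ix+d_i$ is a polynomial, it is differentiable on all of $\R$ with $f_i'(x)=3a_ix^2+2b_ix+c_i$, so the hypotheses of Corollary \ref{pcubthm} are satisfied as soon as $f$ is convex (which is assumed). Applying Corollary \ref{pcubthm} verbatim then yields a piecewise description: on each ``transition'' interval $\left[x_i+\frac{1}{r}f_{i-1}'(x_i),\,x_i+\frac{1}{r}f_i'(x_i)\right]$ we have $P_rf(x)=x_i$ and $e_rf(x)=f_i(x_i)+\frac{r}{2}(x_i-x)^2$ (using $f_{i-1}(x_i)=f_i(x_i)$ from continuity, cf.\ Lemma \ref{lem:piecewisecubic}), while on each intervening interval $P_rf(x)=P_rf_i(x)$ and $e_rf(x)=e_rf_i(x)$.

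First I would rewrite the endpoints $x_i+\frac{1}{r}f_{i-1}'(x_i)$ and $x_i+\frac{1}{r}f_i'(x_i)$ explicitly as $x_i+\frac{1}{r}(3a_{i-1}x_i^2+2b_{i-1}x_i+c_{i-1})$ and $x_i+\frac{1}{r}(3a_ix_i^2+2b_ix_i+c_i)$; these are precisely the endpoints defining the sets $S_0,S_1,\dots,S_{2m}$ in the statement, so the partition matches the case structure of Corollary \ref{pcubthm} exactly. Next I would compute $P_rf_i$ on each ``cubic'' block $S_{2i}$: setting the derivative of $y\mapsto f_i(y)+\frac{r}{2}(y-x)^2$ equal to zero gives the quadratic $3a_iy^2+(2b_i+r)y+(c_i-rx)=0$, whose two roots differ by the sign of the radical. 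Proposition \ref{prop:twopoints} identifies the root lying in $[x_i,x_{i+1}]$ as the one with the positive square root, namely $p_i=\frac{-(2b_i+r)+\sqrt{(2b_i+r)^2-12a_i(c_i-rx)}}{6a_i}$. Substituting $P_rf_i(x)=p_i$ into $e_rf_i(x)=f_i(P_rf_i(x))+\frac{r}{2}(P_rf_i(x)-x)^2$ gives $e_rf(x)=f_i(p_i)+\frac{r}{2}(p_i-x)^2$ on $S_{2i}$, and reassembling the pieces produces the stated formulas for $P_rf$ and $e_rf$.

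I expect the main obstacle to be bookkeeping rather than any new idea: one must check that $S_{2i}$ is exactly the image of $[x_i,x_{i+1}]$ under the monotone map $y\mapsto y+\frac{1}{r}f_i'(y)$, so that the parametrization $x=\tilde x+\frac{1}{r}(3a_i\tilde x^2+2b_i\tilde x+c_i)$ used in Proposition \ref{prop:twopoints} is legitimate on that block, and that convexity of $f$ passes to convexity of each $f_i$ on $D_i$ (Lemma \ref{lem:piecewisecubic}), which is what forces $6a_i\tilde x+2b_i\geq0$ on $[x_i,x_{i+1}]$ and thereby fixes the correct sign of the radical. A minor caveat worth flagging is the degenerate case $a_i=0$, in which the quadratic-formula expression for $p_i$ is to be read in the limiting piecewise-linear-quadratic sense; the argument via Corollary \ref{pcubthm} still applies without change there.
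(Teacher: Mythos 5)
Your proposal is correct and matches the paper's intent exactly: the paper gives no separate proof for this corollary, presenting it as the direct combination of Corollary \ref{pcubthm} (piecewise structure of $P_rf$ and $e_rf$) with Proposition \ref{prop:twopoints} (choice of the positive-root proximal point), which is precisely your argument. Your added remarks on the monotone parametrization of $S_{2i}$ and the degenerate case $a_i=0$ are sensible bookkeeping points that the paper leaves implicit.
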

\noindent Algorithm \ref{alg:piecewisecubic} below is a block of pseudocode that accepts as input a set of $m$ cubic functions $\{f_1,\ldots,f_m\}$ and $m-1$ intersection points $\{x_1,\ldots,x_{m-1}\}$ that form the convex piecewise cubic function $f,$ calculates $e_rf$ and plots $f$ and $e_rf$ together.
\begin{algorithm}[H]
\caption{: A routine for graphing the Moreau envelope of a convex piecewise cubic function.}
\label{alg:piecewisecubic}
\begin{algorithmic}
\STATE \textbf{Step 0.} Input coefficients of $f_i,$ intersection points, prox-parameter $r,$ lower and upper bounds for the graph.
\STATE \textbf{Step 1.} Find $f_i'$ for each $i.$
\STATE \textbf{Step 2.} Use $f_i'$ and $x_i$ to define the subdomains $S_i$ of $e_rf$ as found in Corollary \ref{cor:pc2}.
\STATE \textbf{Step 3.} On each $S_i,$ use Proposition \ref{prop:twopoints} to find the proximal point $p_i.$
\STATE \textbf{Step 4.} Find $e_rf(x)=f_i(p)+\frac{r}{2}(p-x)^2$ for $x\in S_i,$ where $p$ is $p_i$ for $i$ even and $x_i$ for $i$ odd.
\STATE \textbf{Step 5.} Plot $f$ and $e_rf$ on the same axes.
\end{algorithmic}
\end{algorithm}

%%%%%
\section{Smoothing a gauge function via the Moreau envelope}\label{sec:anf}
%%%%%

In this section, we focus on the idea of smoothing a gauge function. Gauge functions are proper, lsc and convex, but many gauge functions have ridges of nondifferentiability that can be regularized by way of the Moreau envelope. The main result of this section is a method of smoothing a gauge function that yields another gauge function that is differentiable everywhere except on the kernel, as we shall see in Theorem \ref{normthm}. A special case of a gauge function is a norm function; Corollary \ref{normcor} applies Theorem \ref{normthm} to an arbitrary norm function, resulting in another norm function that is smooth everywhere except at the origin.\par To our knowledge, this smoothing of gauge functions and norm functions is a new development in Convex Optimization. It is our hope that this new theory will be of interest and of some practical use to the readers of this paper.

%%%
\subsection{Definitions}
%%%

We begin with some definitions that are used only in this section.
\begin{defn}
For $x\in\R,$ the \emph{sign function} $\sgn(x)$ is defined
$$\sgn(x)=\begin{cases}
1,&\mbox{if }x>0,\\0,&\mbox{if }x=0,\\-1,&\mbox{if }x<0.
\end{cases}$$
\end{defn}
\begin{defn}
A function $k$ on $\R^n$ is a \emph{gauge} if $k$ is a nonnegative, positively homogeneous, convex function such that $k(0)=0.$
\end{defn}
\begin{defn}
A function $f$ on $\R^n$ is \emph{gauge-like} if $f(0)=\inf f$ and the lower level sets
$$\{x:f(x)\leq\alpha\},~f(0)<\alpha<\infty$$
are all proportional, i.e. they can all be expressed as positive scalar multiples of a single set.
\end{defn}
\noindent Note that any norm function is a closed gauge. Theorem \ref{thm:gauge} below gives us a way to construct gauge-like functions that are not necessarily gauges.
\begin{thm}\emph{\cite[Theorem 15.3]{convanalrock}}\label{thm:gauge}
A function $f$ is a gauge-like closed proper convex function if and only if it can be expressed in the form$$f(x)=g(k(x)),$$where $k$ is a closed gauge and $g$ is a nonconstant nondecreasing lsc convex function on $[0,\infty]$ such that $g(y)$ is finite for some $y>0$ and $g(\infty)=\infty.$ If $f$ is gauge-like, then $f^*$ is gauge-like as well.
\end{thm}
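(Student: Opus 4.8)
The plan is to prove both implications of Theorem~\ref{thm:gauge} by passing back and forth between $f$ and the family of its sublevel sets $C_\alpha=\{x\in\R^n:f(x)\le\alpha\}$, exploiting that a closed proper convex function is completely determined by its closed convex sublevel sets, and that $f$ being \emph{gauge-like} is precisely the statement that these sets (for $\alpha>f(0)$) are all positive multiples of a single convex set $C$.

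For the ``if'' direction, assume $f=g\circ k$ with $k$ a closed gauge and $g$ a nonconstant nondecreasing lsc convex function on $[0,\infty]$ that is finite at some $y>0$ and satisfies $g(\infty)=\infty$. Convexity of $f$ is immediate since $g$ is nondecreasing convex and $k$ is convex; lower semicontinuity follows since $g$ is lsc and nondecreasing and $k$ is lsc; properness follows since $g(y)<\infty$ for some $y>0$ while positive homogeneity of $k$ with $k(0)=0$ forces $[0,\infty)\subseteq\ran k$ along any ray on which $k$ is positive and finite, so $f$ is finite somewhere and bounded below by $g(0)$. As $0\in\ran k$ and $g$ is nondecreasing, $f(0)=g(0)=\inf_{t\ge0}g(t)=\inf f$. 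Finally, for $\alpha>f(0)$ one verifies $C_\alpha=\{x:k(x)\le\beta(\alpha)\}$ with $\beta(\alpha)=\sup\{t\ge0:g(t)\le\alpha\}$, and $\{x:k(x)\le\beta\}=\beta\{x:k(x)\le1\}$ for $\beta>0$ (and equals the kernel $\{x:k(x)=0\}$ for $\beta=0$); hence all $C_\alpha$ are proportional and $f$ is gauge-like.

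For the ``only if'' direction, let $f$ be gauge-like, closed, proper and convex, and pick a reference level $\alpha_0>f(0)$; put $C:=C_{\alpha_0}$, which is closed, convex and contains $0$ (because $f(0)=\inf f$). By hypothesis there is a nondecreasing $\lambda:(f(0),\infty)\to(0,\infty)$ with $\lambda(\alpha_0)=1$ and $C_\alpha=\lambda(\alpha)C$ for all $\alpha>f(0)$. The crucial step is that $\lambda$ is \emph{concave}: convexity of $f$ gives $\theta C_{\alpha_1}+(1-\theta)C_{\alpha_2}\subseteq C_{\theta\alpha_1+(1-\theta)\alpha_2}$ for $\theta\in[0,1]$, and since $C$ is convex the identity $aC+bC=(a+b)C$ $(a,b\ge0)$ turns the left side into $\big(\theta\lambda(\alpha_1)+(1-\theta)\lambda(\alpha_2)\big)C$, whence $\theta\lambda(\alpha_1)+(1-\theta)\lambda(\alpha_2)\le\lambda(\theta\alpha_1+(1-\theta)\alpha_2)$. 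Now let $k$ be the Minkowski gauge of $C$, $k(x)=\inf\{\mu>0:x\in\mu C\}$, which is a closed gauge with $\{x:k(x)\le t\}=tC$ for $t>0$, and let $g$ on $[0,\infty]$ be the generalized inverse of $\lambda$ chosen so that $g(s)\le\alpha\iff s\le\lambda(\alpha)$, e.g. $g(s)=\inf\{\alpha:\lambda(\alpha)\ge s\}$ with the obvious values at $0$ and $\infty$. Monotonicity and concavity of $\lambda$ make $g$ nondecreasing and convex, it is lsc and nonconstant by construction (excluding the trivial constant-$f$ case), $g$ is finite at $1>0$ since $g(1)\le\alpha_0<\infty$, and $g(\infty)=\infty$ because a nonconstant proper convex $f$ is unbounded above. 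By the defining property of $g$ one has $\{x:g(k(x))\le\alpha\}=\{x:k(x)\le\lambda(\alpha)\}=\lambda(\alpha)C=C_\alpha=\{x:f(x)\le\alpha\}$ for every $\alpha$, and since both $f$ and $g\circ k$ are lsc with identical sublevel sets they coincide.

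For the last assertion, write $f=g\circ k$ as above and invoke the conjugacy calculus for compositions with a gauge (same reference): $f^{*}=g^{+}\circ k^{\circ}$, where $k^{\circ}$ is the polar gauge of $k$ (again a closed gauge) and $g^{+}(s)=\sup_{t\ge0}\{st-g(t)\}$ is the monotone conjugate of $g$, which is again nondecreasing, convex, lsc, nonconstant, finite somewhere on $(0,\infty)$, and satisfies $g^{+}(\infty)=\infty$ (since $g^{+}(s)\ge sy-g(y)\to\infty$ for a fixed $y>0$ with $g(y)<\infty$). Applying the already-proved ``if'' direction to $f^{*}=g^{+}\circ k^{\circ}$ shows $f^{*}$ is gauge-like. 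I expect the main obstacle to be exactly the concavity of $\lambda$: this is the one place where the convexity of $f$ is used in an essential way (beyond mere convexity of its sublevel sets), and it rests on the set identity $aC+bC=(a+b)C$ for convex $C$, after which everything else is bookkeeping about generalized inverses and the duality of gauges.
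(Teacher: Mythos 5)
The paper offers no proof of this statement -- it is quoted verbatim as \cite[Theorem 15.3]{convanalrock} -- so your argument can only be measured against the classical one, which it essentially reconstructs: sufficiency by identifying the sublevel sets of $g\circ k$ as positive dilates of $\{x:k(x)\le1\}$, necessity by taking $k$ to be the Minkowski gauge of a reference sublevel set $C=C_{\alpha_0}$ and $g$ a generalized inverse of the scaling function $\lambda$, and the conjugacy assertion via the composition formula $(g\circ k)^*=g^{+}\circ k^{\circ}$, which is exactly how the cited source handles that part (its Theorem 15.1). So the route is the right one; the overall structure is sound.

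One step does need repair. From $\big(\theta\lambda(\alpha_1)+(1-\theta)\lambda(\alpha_2)\big)C\subseteq\lambda(\theta\alpha_1+(1-\theta)\alpha_2)C$ you infer $\theta\lambda(\alpha_1)+(1-\theta)\lambda(\alpha_2)\le\lambda(\theta\alpha_1+(1-\theta)\alpha_2)$, and earlier you take for granted that $\lambda$ is well defined and nondecreasing; both rest on the cancellation rule $\mu C\subseteq\nu C\Rightarrow\mu\le\nu$, which is false in general (for $C$ a closed convex cone, $\mu C=C$ for every $\mu>0$). The fix is a dichotomy. If $\mu C\subseteq\nu C$ for some $\mu>\nu>0$, then $C\subseteq tC$ with $t=\nu/\mu<1$; iterating, every $x\in C$ satisfies $sx\in C$ for arbitrarily large $s$, and since $0\in C$ and $C$ is convex and closed, the ray $\R_+x$ lies in $C$, so $C$ is a cone. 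In that degenerate case all sublevel sets $C_\alpha$ coincide with $C$, $f=f(0)+\iota_C$, and the representation holds directly with $k$ the gauge of $C$ and $g$ equal to $f(0)$ on $[0,1]$, suitably extended so as to be nonconstant with $g(\infty)=\infty$. Otherwise cancellation is valid and your monotonicity and concavity arguments go through. Two smaller points: the identity $\{x:g(k(x))\le\alpha\}=C_\alpha$ should be claimed only for $\alpha>f(0)$ (your definition of $g$ does not immediately control the levels $\alpha\le f(0)$), which suffices because both $f$ and $g\circ k$ are bounded below by $f(0)$ and each equals $\inf\{\alpha>f(0):x\in\{\,\cdot\le\alpha\}\}$; and the properties you assert for the monotone conjugate $g^{+}$ (nonconstant, finite at some $s>0$) do hold but merit the one-line checks, e.g.\ finiteness of $g^{+}$ on a nontrivial interval follows because nonconstancy and lower semicontinuity of $g$ together with $g(\infty)=\infty$ force its recession slope to be strictly positive.
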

\begin{ex}
Let $k:\R\to\R,$ $k(x)=|x|$ and $g:[0,\infty]\to\R,$ $g(x)=x+1.$ Then by Theorem \ref{thm:gauge}, we have that$$f(x)=g(k(x))=|x|+1$$is gauge-like, and so is $$f^*(y)=\begin{cases}-1,&\mbox{if }-1\leq y\leq1,\\\infty,&\mbox{otherwise.}\end{cases}$$
\end{ex}

%%%
\subsection{Main result and illustrations}
%%%

The following theorem and corollary are the main results of this section. Then some typical norm functions on $\R^2$ are showcased: the $\infty$-norm and the $\ell^1$-norm. Finally, by way of counterexample we demonstrate that the Moreau envelope is ideal for the smoothing effect of Theorem \ref{normthm} and other regularizations may not be; the Pasch-Hausdorff envelope is shown not to have the desired effect.
\begin{thm}\label{normthm}
Let $f:\R^n\rightarrow\overline{\R}$ be a gauge function. Define $g_r(x)=[e_r(f^2)](x)$ and $h_r=\sqrt{g_r}.$ Then $h_r$ is a gauge function, differentiable except on $\{x: f(x)=0\},$ and $\lim\limits_{r\nearrow\infty}h_r=f.$
\end{thm}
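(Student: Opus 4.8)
The plan is to establish the three claimed properties of $h_r=\sqrt{e_r(f^2)}$ in turn: (1) $h_r$ is a gauge; (2) $h_r$ is differentiable off the kernel $\{x:f(x)=0\}$; (3) $h_r\to f$ pointwise as $r\nearrow\infty$. For (1), the key observation is that $f^2$ is a nonnegative convex function that is positively homogeneous of degree $2$ (since $f$ is a gauge, hence positively homogeneous of degree $1$). I would show directly from the definition that $e_r(f^2)$ is positively homogeneous of degree $2$: substituting $y=\lambda z$ in the infimum defining $e_r(f^2)(\lambda x)$ and using $f^2(\lambda z)=\lambda^2 f^2(z)$ pulls out a factor $\lambda^2$. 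Nonnegativity of $g_r=e_r(f^2)$ is immediate since $f^2\ge 0$, and $g_r(0)=0$ because the infimum defining $e_r(f^2)(0)$ is attained at $y=0$ (as $f^2(0)=0$ is the minimum of $f^2$). Convexity of $g_r$ is standard (Moreau envelopes of convex functions are convex), so $g_r$ is itself a gauge-like object of degree $2$; taking the square root then restores degree-$1$ positive homogeneity, nonnegativity, and vanishing at $0$. Convexity of $h_r=\sqrt{g_r}$ is the one nontrivial point here: I would argue it either via the sublevel sets of $h_r$ (which coincide, up to the squaring of the level, with the convex sublevel sets of $g_r$), or by invoking Theorem~\ref{thm:gauge} with $k=h_r$ shown to be a gauge by an independent route — more cleanly, I would note that a nonnegative positively homogeneous function whose sublevel sets are convex is convex, and $\{h_r\le\alpha\}=\{g_r\le\alpha^2\}$ is convex.

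For (2), differentiability of $g_r=e_r(f^2)$ on all of $\R^n$ is automatic from \cite{rockwets} since $f^2\in\Gamma_0(\R^n)$. Then $h_r=\sqrt{g_r}$ is differentiable wherever $g_r>0$ by the chain rule, since $t\mapsto\sqrt t$ is smooth on $(0,\infty)$. So the remaining task is to identify precisely where $g_r$ vanishes. I would show $g_r(x)=0 \iff f(x)=0$: if $f(x)=0$ then taking $y=x$ gives $e_r(f^2)(x)\le f^2(x)=0$; conversely if $g_r(x)=0$, then since both $f^2(y)\ge0$ and $\frac r2\|y-x\|^2\ge0$, the infimum being $0$ forces (by a standard lsc/coercivity argument, or by noting the infimum is attained at $P_r(f^2)(x)=:\bar y$) that $f(\bar y)=0$ and $\bar y=x$, hence $f(x)=0$. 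Thus the non-differentiability set of $h_r$ is contained in $\{x:f(x)=0\}$, as claimed.

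For (3), I would invoke Fact~\ref{fact:egoestoinff}(ii): $e_r(f^2)(x)\to f^2(x)$ as $r\nearrow\infty$ for every $x$, since $f^2\in\Gamma_0(\R^n)$. Taking square roots and using continuity of $\sqrt{\cdot}$ gives $h_r(x)=\sqrt{e_r(f^2)(x)}\to\sqrt{f^2(x)}=f(x)$ pointwise.

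I expect the main obstacle to be the convexity of $h_r=\sqrt{g_r}$, i.e. verifying that taking the square root of the degree-$2$ gauge-like function $g_r$ yields a genuinely convex function rather than merely a nonnegative positively homogeneous one. The sublevel-set characterization $\{h_r\le\alpha\}=\{g_r\le\alpha^2\}$ handles this, but one must be careful to check that a nonnegative, positively homogeneous function with all sublevel sets convex is indeed convex — this uses positive homogeneity of degree $1$ crucially (it is false for degree $2$), so the argument must be arranged so that the square root is taken before appealing to convexity of sublevel sets. The other points are short consequences of Fact~\ref{fact:egoestoinff}, the differentiability of Moreau envelopes, and direct manipulation of the infimum defining $e_r(f^2)$.
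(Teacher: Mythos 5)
Your proposal is correct, and the differentiability and limit parts coincide with the paper's argument: the paper likewise observes that $f^2\in\Gamma_0(\R^n)$, so $g_r=e_r(f^2)$ is convex and $\mathcal{C}^1$ with $\nabla g_r(x)=r[x-P_r f^2(x)]$, applies the chain rule to $\sqrt{g_r}$ off the zero set, identifies $\{g_r=0\}=\{f=0\}$ via $\inf g_r=\inf f^2$ and $\argmin g_r=\argmin f^2$ (your attainment argument at $\bar y=P_r(f^2)(x)$ is the same point), and gets $h_r\to f$ from $e_r(f^2)\to f^2$ and continuity of the square root.

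Where you genuinely diverge is the gauge property. The paper, after proving (by the same substitution you propose) that $g_r$ is positively homogeneous of degree $2$, simply invokes \cite[Corollary 15.3.1]{convanalrock}: a nonnegative convex function that is positively homogeneous of degree $2$ is of the form $\tfrac12 k^2$ for a closed gauge $k$, whence $h_r=\sqrt{g_r}=\tfrac{1}{\sqrt2}k$ is a gauge. You instead derive convexity of $h_r$ from first principles via its sublevel sets: $\{h_r\le\alpha\}=\{g_r\le\alpha^2\}$ is convex, $h_r$ is nonnegative and positively homogeneous of degree $1$, and a nonnegative degree-$1$ positively homogeneous function with convex sublevel sets is convex. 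That auxiliary claim is true and you are right to flag it as the crux (and right that degree-$1$ homogeneity is essential): for $a=h_r(x)>0$, $b=h_r(y)>0$ one writes $\frac{x+y}{a+b}=\frac{a}{a+b}\frac{x}{a}+\frac{b}{a+b}\frac{y}{b}\in\{h_r\le1\}$ to get subadditivity, with a small $\delta$-perturbation handling the cases $a=0$ or $b=0$; together with homogeneity this gives sublinearity, hence convexity. So your route is a self-contained, elementary proof of essentially the same structural fact that the paper outsources to Rockafellar (indeed, Corollary 15.3.1 is proved by this kind of Minkowski-gauge argument); the paper's citation buys brevity and also hands over closedness of the gauge for free, while your version avoids the external reference at the cost of having to actually prove the homogeneity-plus-quasiconvexity lemma rather than merely asserting it.
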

\begin{proof}
By \cite[Theorem 1.25]{rockwets}, $\lim_{r\nearrow\infty}g_r=f^2.$ So we have $\lim_{r\nearrow\infty}h_r=|f|,$ which is simply $f$ since $f(x)\geq0$ for all $x.$
Since $f$ is nonnegative and convex, $f^2$ is proper, lsc and convex. By \cite[Theorem 2.26]{rockwets} we have that $g_r$ is convex and continuously differentiable everywhere, the gradient being
$$\nabla g_r(x)=r[x-P_rf^2(x)].$$
Then by the chain rule, we have that
$$\nabla h_r(x)=\frac{1}{2}[g_r(x)]^{-\frac{1}{2}}r[x-P_rf^2(x)], \text{ provided that } g_r(x)\neq0.$$
Since $\inf g_{r}=\inf f^2 =0$ and $\argmin g_{r}=\argmin f^2$, we have $g_{r}(x)=0$ if and only if $f^2(x)=0$, i.e., $f(x)=0$.
To see that $h_r$ is a gauge function, we have
\begin{align*}
g_r(\alpha x)&=[e_r(f^2)](\alpha x)=\inf\limits_{y\in\R^n}\left\{f^2(y)+\frac{r}{2}\|y-\alpha x\|^2\right\}\\
&=\inf\limits_{y\in\R^n}\left\{f^2(y)+\frac{\alpha^2r}{2}\left\|\frac{y}{\alpha}-x\right\|^2\right\}\\
&=\alpha^2\inf\limits_{\frac{y}{\alpha}\in\R^n}\left\{f^2\left(\frac{y}{\alpha}\right)+\frac{r}{2}\left\|\frac{y}{\alpha}-x\right\|^2\right\}\\
&=\alpha^2\inf\limits_{\tilde{y}\in\R^n}\left\{f^2(\tilde{y})+\frac{r}{2}\|\tilde{y}-x\|^2\right\}=\alpha^2g_r(x).
\end{align*}
Thus, $g_r$ is positively homogeneous of degree two. Hence, by \cite[Corollary 15.3.1]{convanalrock}, there exists a closed gauge function $k$ such that $g_r(x)=\frac{1}{2}k^2(x).$ Then
$$h_r(x)=\sqrt{g_r(x)}=\sqrt{\frac{1}{2}k^2(x)}=\frac{1}{\sqrt{2}}k(x)$$
and we have that $h_r$ is a gauge function.
\end{proof}
\begin{cor}\label{normcor}
Let $f:\R^n\rightarrow\overline{\R},$ $f(x)=\|x\|_*$ be an arbitrary norm function. Define $g_r(x)=[e_r(f^2)](x)$ and $h_r=\sqrt{g_r}.$ Then $h_r$ is a norm, $h_r$ is differentiable everywhere except at the origin, and $\lim\limits_{r\nearrow\infty}h_r=f.$
\end{cor}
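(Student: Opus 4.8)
The plan is to deduce this as essentially a special case of Theorem \ref{normthm}, since every norm is in particular a gauge function. Applying Theorem \ref{normthm} to $f=\|\cdot\|_*$ immediately gives that $h_r$ is a gauge function, that $\lim_{r\nearrow\infty}h_r=f$, and that $h_r$ is differentiable off the set $\{x:f(x)=0\}$. Because $f$ is a norm, $f(x)=0$ forces $x=0$, so that exceptional set is exactly $\{0\}$; this settles the differentiability claim and the limit claim at once.

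It remains to upgrade \textnormal{``gauge''} to \textnormal{``norm.''} Recall from the proof of Theorem \ref{normthm} that $h_r=\tfrac{1}{\sqrt 2}k$ for a closed gauge $k$, so $h_r$ is nonnegative, convex, and positively homogeneous of degree one; convexity together with positive homogeneity yields subadditivity, i.e.\ the triangle inequality (for a positively homogeneous convex $h$ one has $h(x+y)=2h(\tfrac{x+y}{2})\leq h(x)+h(y)$). Two properties then remain: symmetry $h_r(-x)=h_r(x)$ and positive definiteness. For symmetry, I would observe that $f=\|\cdot\|_*$ is even, hence $f^2$ is even, hence by Lemma \ref{evenlem} $g_r=e_r(f^2)$ is even, and therefore $h_r=\sqrt{g_r}$ is even; combined with degree-one positive homogeneity this gives the full scaling identity $h_r(\alpha x)=|\alpha|\,h_r(x)$ for all $\alpha\in\R$. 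For positive definiteness, I would reuse the observation already made in the proof of Theorem \ref{normthm} that $g_r(x)=0$ if and only if $f(x)=0$; since $f$ is a norm this occurs only at $x=0$, so $h_r(x)=0\iff x=0$.

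Assembling these facts---nonnegativity, $h_r(\alpha x)=|\alpha|\,h_r(x)$, the triangle inequality, and $h_r(x)=0\iff x=0$---shows that $h_r$ is a norm, which completes the proof. There is no genuine obstacle here: the only point requiring care is that Theorem \ref{normthm} delivers a gauge rather than a norm, so one must explicitly invoke the evenness of $f$ (through Lemma \ref{evenlem}) and the definiteness of $f$ to promote the conclusion, both of which are immediate.
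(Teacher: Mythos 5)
Your proposal is correct and follows essentially the same route as the paper: invoke Theorem \ref{normthm} for the gauge, differentiability and limit claims, then upgrade to a norm by checking definiteness (via $g_r(x)=0\iff f(x)=0$, which the paper re-derives directly from attainment of the infimum) and the triangle inequality from convexity plus positive homogeneity. Your explicit verification of evenness via Lemma \ref{evenlem} is a small point the paper leaves implicit, and it is a welcome addition rather than a departure.
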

\begin{proof}
By Theorem \ref{normthm}, we have that $\lim_{r\nearrow\infty}h_r=f,$ $h_r$ is differentiable everywhere except at the origin, $h_r$ is nonnegative and positively homogeneous. To see that $h_r$ is a norm, it remains to show that
\begin{itemize}
\item[(i)] $h_r(x)=0\Rightarrow x=0$ and
\item[(ii)] $h_r(x+y)\leq h_r(x)+h_r(y)$ for all $x,y\in\R^n.$
\end{itemize}
\noindent (i) Suppose that $h_r(x)=0.$ Then
\begin{align*}
\sqrt{[e_r(f^2)](x)}&=0,\\
\inf\limits_{y\in\R^n}\left\{\|y\|_*^2+\frac{r}{2}\|y-x\|^2\right\}&=0,\\
\|\tilde{y}\|^2_*+\|\tilde{y}-x\|^2&=0\mbox{ for some }\tilde{y},\mbox{ (since }\|\cdot\|_*^2\mbox{ is strongly convex)}\\
\|\tilde{y}\|_*&=-\|\tilde{y}-x\|\Rightarrow\tilde{y}=0\Rightarrow x=0.
\end{align*}
\noindent(ii) We have that $h_r$ is convex, since it is a gauge function. Therefore, by \cite[Theorem 4.7]{convanalrock}, the triangle inequality holds.
\end{proof}
\noindent Now we present some examples on $\R^2,$ to illustrate the method of Theorem \ref{normthm}.
\begin{ex}\label{ex:maxnorm}
Let $f:\R^2\rightarrow\R,$ $f(x,y)=\max(|x|,|y|).$ Define $g_r(x,y)=[e_r(f^2)](x,y)$ and $h_r(x,y)=\sqrt{g_r(x,y)}.$ Then, with $\R^2$ partitioned as
\begin{align*}
R_1^r&=\left\{(x,y):-\frac{r}{r+2}x\leq y\leq\frac{r}{r+2}x\right\}\cup\left\{(x,y):\frac{r}{r+2}x\leq y\leq-\frac{r}{r+2}x\right\},\\
R_2^r&=\left\{(x,y):-\frac{r}{r+2}y\leq x\leq\frac{r}{r+2}y\right\}\cup\left\{(x,y):\frac{r}{r+2}y\leq x\leq-\frac{r}{r+2}y\right\},\\
R_3^r&=\left\{(x,y):\frac{r}{r+2}y\leq x\leq\frac{r+2}{r}y\right\}\cup\left\{(x,y):\frac{r+2}{r}y\leq x\leq\frac{r}{r+2}y\right\},\\
R_4^r&=\left\{(x,y):-\frac{r}{r+2}y\leq x\leq-\frac{r+2}{r}y\right\}\cup\left\{(x,y):-\frac{r+2}{r}y\leq x\leq-\frac{r}{r+2}y\right\},
\end{align*}
we have
$$P_rh_r(x,y)=\begin{cases}
\left(\frac{rx}{r+2},y\right),&\mbox{if }(x,y)\in R_1^r,\\
\left(x,\frac{ry}{r+2}\right),&\mbox{if }(x,y)\in R_2^r,\\
\left(\frac{r(x+y)}{2(r+1)},\frac{r(x+y)}{2(r+1)}\right),&\mbox{if }(x,y)\in R_3^r,\\
\left(\frac{r(x-y)}{2(r+1)},\frac{-r(x-y)}{2(r+1)}\right),&\mbox{if }(x,y)\in R_4^r,
\end{cases}$$
$$h_r(x,y)=\begin{cases}
\sqrt{\frac{r}{r+2}}|x|,&\mbox{if }(x,y)\in R_1^r,\\
\sqrt{\frac{r}{r+2}}|y|,&\mbox{if }(x,y)\in R_2^r,\\
\sqrt{\frac{r^2(x-y)^2+2r(x^2+y^2)}{4(r+1)}},&\mbox{if }(x,y)\in R_3^r,\\
\sqrt{\frac{r^2(x+y)^2+2r(x^2+y^2)}{4(r+1)}},&\mbox{if }(x,y)\in R_4^r,
\end{cases}$$
and $\lim\limits_{r\nearrow\infty}h_r=f,$ $\lim\limits_{r\searrow0}h_r=0.$
\end{ex}
\begin{proof}
Figure \ref{fig:xy} shows the partitioning of $\R^2$ in the case $r=1;$ for other values of $r$ the partition is of similar form.
\begin{figure}[H]
\begin{center}\includegraphics[scale=0.21]{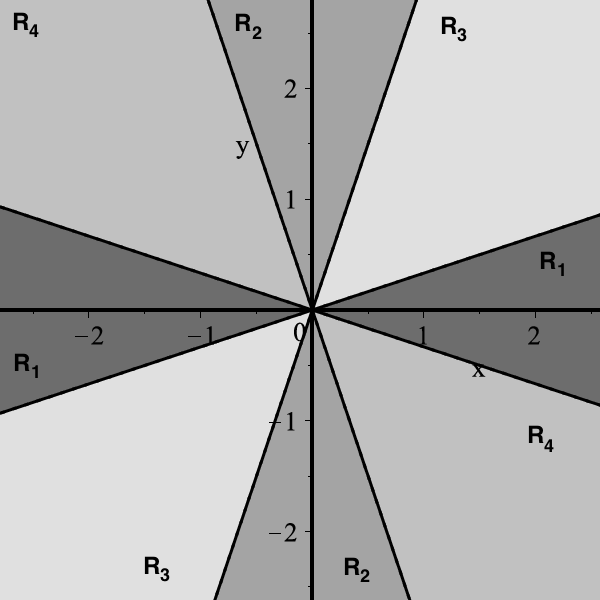}\end{center}
\caption{The four regions of the piecewise function $h_1(x,y).$}
\label{fig:xy}
\end{figure}
\noindent We have\small
\begin{align}
&[e_r(f^2)](\bar{x},\bar{y})\nonumber\\
=&\inf\limits_{(x,y)\in\R^2}\left\{[\max\{|x|,|y|\}]^2+\frac{r}{2}\left[(x-\bar{x})^2+(y-\bar{y})^2\right]\right\}\nonumber\\
=&\min\left[\inf\limits_{|x|\geq|y|}\left\{x^2+\frac{r}{2}\left[(x-\bar{x})^2+(y-\bar{y})^2\right]\right\},\inf\limits_{|x|<|y|}\left\{y^2+\frac{r}{2}\left[(x-\bar{x})^2+(y-\bar{y})^2\right]\right\}\right].\label{biggerone}
\end{align}\normalsize
We denote the first infimum of \eqref{biggerone} by $I_x,$ and the second one by $I_y.$ We need to split the restriction on $I_x$ into three pieces: the differentiable portion $|x|>|y|,$ the $x=y$ portion, and the $x=-y$ portion. We denote these three infima as $I_{|x|>|y|},$ $I_{x=y},$ and $I_{x=-y}.$ Similarly, we split $I_y$ into $I_{|x|<|y|},$ $I_{x=y},$ and $I_{x=-y}.$ Considering $I_{|x|>|y|},$ we set the gradient of its argument equal to zero and find a proximal point of $(x,y)=\left(\frac{r\bar{x}}{r+2},\bar{y}\right),$ which yields an infimum of
$$I_{|x|>|y|}=\frac{r\bar{x}^2}{r+2}.$$
This is the result for $|x|>|y|,$ or in other words for $\left|\frac{r\bar{x}}{r+2}\right|>|\bar{y}|$ (region $R_1^r$). In a moment we compare this result to $I_{x=y}$ and $I_{x=-y};$ $I_x$ is the minimum of the three. By a symmetric process, considering $I_{|x|<|y|}$ we find a proximal point of $\left(\bar{x},\frac{r\bar{y}}{r+2}\right).$ This gives
$$I_{|x|<|y|}=\frac{r\bar{y}^2}{r+2}$$
for $|x|<|y|,$ or in other words for $|\bar{x}|<\left|\frac{r\bar{y}}{r+2}\right|$ (region $R_2^r$). It is clear that if $|x|>|y|,$ then we have $I_{|x|>|y|}<I_{|x|<|y|},$ and if $|x|<|y|,$ then $I_{|x|<|y|}<I_{|x|>|y|}.$ Hence, $g_r(\bar{x},\bar{y})$ will be $\min(I_{|x|>|y|},I_{x=y},I_{x=-y})$ on $R_1^r$ and $\min(I_{|x|<|y|},I_{x=y},I_{x=-y})$ on $R_2^r.$ Now we consider $I_{x=y}.$ In this case, the infimum reduces to a one-dimensional problem, $$\inf\limits_{x\in\R}\left\{x^2+\frac{r}{2}\left[(x-\bar{x})^2+(x-\bar{y})^2\right]\right\},$$ whose solution is
$$I_{x=y}=\frac{r^2(\bar{x}-\bar{y})^2+2r(\bar{x}^2+\bar{y}^2)}{4(r+1)},\mbox{ with proximal point }\left(\frac{r(\bar{x}+\bar{y})}{2(r+1)},\frac{r(\bar{x}+\bar{y})}{2(r+1)}\right).$$
Similarly, we find that
$$I_{x=-y}=\frac{r^2(\bar{x}+\bar{y})^2+2r(\bar{x}^2+\bar{y}^2)}{4(r+1)},\mbox{ with proximal point }\left(\frac{r(\bar{x}-\bar{y})}{2(r+1)},\frac{-r(\bar{x}-\bar{y})}{2(r+1)}\right).$$
Now let us compare $I_{|x|>|y|}$ to $I_{x=y}.$ We show that on $R_1^r$ the latter is always greater than or equal to the former, by assuming so and arriving at a tautology:
\begin{align*}
\frac{r^2(\bar{x}-\bar{y})^2+2r(\bar{x}^2+\bar{y}^2)}{4(r+1)}&\geq\frac{r\bar{x}^2}{r+2}\\
(r+2)[r^2(\bar{x}^2-2\bar{x}\bar{y}+\bar{y}^2)+2r\bar{x}^2+2r\bar{y}^2]&\geq4r(r+1)\bar{x}^2\\
r[(r+2)^2\bar{x}^2-2r(r+2)\bar{x}\bar{y}+(r+2)^2\bar{y}^2]&\geq r(4r+4)\bar{x}^2\\
(r^2+4r+4-4r-4)\bar{x}^2-2r(r+2)\bar{x}\bar{y}+(r+2)^2\bar{y}^2&\geq0\\
r^2\bar{x}^2-2r(r+2)\bar{x}\bar{y}+(r+2)^2\bar{y}^2&\geq0\\
[r\bar{x}-(r+2)\bar{y}]^2&\geq0.
\end{align*}
Thus, $I_{x=y}\geq I_{|x|>|y|}$ on $R_1^r.$ By identical arguments, one can show that $I_{x=-y}\geq I_{|x|>|y|}$ on $R_1^r,$ and that $I_{x=y}\geq I_{|x|<|y|}$ and $I_{x=-y}\geq I_{|x|<|y|}$ on $R_2^r.$ Therefore, we have $g_r(x,y)=I_{|x|>|y|}$ on $R_1^r$ and $g_r(x,y)=I_{|x|<|y|}$ on $R_2^r.$ On $R_3^r$ and $R_4^r,$ the Moreau envelope is $\min(I_{x=y},I_{x=-y}),$ since $I_{|x|>|y|}$ and $I_{|x|<|y|}$ are not valid outside of $\left|\frac{rx}{r+2}\right|>|y|$ and $|x|<\left|\frac{ry}{r+2}\right|,$ respectively. Notice that comparing $I_{x=y}$ with $I_{x=-y}$ is equivalent to comparing $(\bar{x}-\bar{y})^2$ with $(\bar{x}+\bar{y})^2,$ which reduces to comparing $-\bar{x}\bar{y}$ with $\bar{x}\bar{y}.$ It is clear that $-\bar{x}\bar{y}<\bar{x}\bar{y}$ if and only if $\sgn(\bar{x})=\sgn(\bar{y})=\pm1.$ Thus, $g_r(x,y)=I_{x=y}$ on the region outside of $R_1^r\cup R_2^r$ where $x,y>0$ and where $x,y<0,$ which is $R_3^r.$ Similarly, $g_r(x,y)=I_{x=-y}$ outside of $R_1^r\cup R_2^r$ where $x>0,y<0$ and where $x<0,y>0,$ which is $R_4^r.$ Therefore, the proximal mapping of $g_r$ is
$$P_rg_r(x,y)=\begin{cases}
\left(\frac{rx}{r+2},y\right),&\mbox{if }(x,y)\in R_1^r,\\
\left(x,\frac{ry}{r+2}\right),&\mbox{if }(x,y)\in R_2^r,\\
\left(\frac{r(x+y)}{2(r+1)},\frac{r(x+y)}{2(r+1)}\right),&\mbox{if }(x,y)\in R_3^r,\\
\left(\frac{r(x-y)}{2(r+1)},\frac{-r(x-y)}{2(r+1)}\right),&\mbox{if }(x,y)\in R_4^r.
\end{cases}$$
Applying to \eqref{biggerone}, we find that
$$g_r(x,y)=\begin{cases}
\frac{rx^2}{r+2},&\mbox{if }(x,y)\in R_1^r,\\
\frac{ry^2}{r+2},&\mbox{if }(x,y)\in R_2^r,\\
\frac{r^2(x-y)^2+2r(x^2+y^2)}{4(r+1)},&\mbox{if }(x,y)\in R_3^r,\\
\frac{r^2(x+y)^2+2r(x^2+y^2)}{4(r+1)},&\mbox{if }(x,y)\in R_4^r.
\end{cases}$$
Finally, $h_r=\sqrt{g_r}$ has the same proximal mapping as $g_r,$ so
$h_r(x,y)$ is as stated in the example.
\begin{figure}
\begin{center}\includegraphics[scale=0.35]{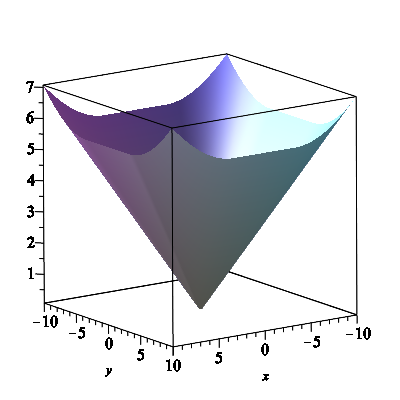}\end{center}
\caption{The function $h_1(x,y).$}
\label{fig:hxy}
\end{figure}
\noindent Now let us take a look at what happens to $h_r$ when $r\nearrow\infty.$ By Theorem \ref{normthm}, we expect to recover $f.$ Taking the limit of $R_1^r,$ we have
\begin{align*}
\lim\limits_{r\nearrow\infty}{R_1^r}&=\lim\limits_{r\nearrow\infty}\left[\left\{(x,y):\frac{-r}{r+2}x\leq y\leq\frac{rx}{r+2}\right\}\cup\left\{(x,y):\frac{rx}{r+2}\leq y\leq\frac{-rx}{r+2}\right\}\right],\\
&=\{(x,y):-x\leq y\leq x\}\cup\{(x,y):x\leq y\leq-x\}\\
&=\{(x,y):|x|\geq|y|\}.
\end{align*}
Similarly, we find that
\begin{align*}
\lim\limits_{r\nearrow\infty}{R_2^r}&=\{(x,y):|x|\leq|y|\},\\
\lim\limits_{r\nearrow\infty}{R_3^r}&=\{(x,y):x=y\},\\
\lim\limits_{r\nearrow\infty}{R_4^r}&=\{(x,y):x=-y\}.\end{align*}
Since $R_3^r$ and $R_4^r$ are now contained in $R_1^r,$ we need consider the limit of $h_r$ over $R_1^r$ and $R_2^r$ only. Therefore,
\begin{align*}
\lim\limits_{r\nearrow\infty}h_r(x,y)&=\begin{cases}
\lim\limits_{r\nearrow\infty}\sqrt{\frac{r}{r+2}}|x|,&\mbox{if }(x,y)\in R_1^r,\\
\lim\limits_{r\nearrow\infty}\sqrt{\frac{r}{r+2}}|y|,&\mbox{if }(x,y)\in R_2^r,
\end{cases}\\
&=\begin{cases}
|x|,&\mbox{if }|x|\geq|y|,\\
|y|,&\mbox{if }|x|\leq|y|,
\end{cases}\\
&=\max\{|x|,|y|\}=f(x,y).
\end{align*}
If, on the other hand, we take the limit as $r$ goes down to zero, then it is $R_3^r$ and $R_4^r$ that become all of $\R^2,$ with
\begin{align*}
\lim\limits_{r\searrow0}{R_3^r}&=\{(x,y):x,y\geq0\}\cup\{(x,y):x,y\leq0\},\\
\lim\limits_{r\searrow0}{R_4^r}&=\{(x,y):x\geq0,y\leq0\}\cup\{(x,y):x\leq0,y\geq0\}.
\end{align*}
Then $R_1^r$ and $R_2^r$ are contained in $R_3^r,$ and the limit of $h_r$ is
\begin{align*}
\lim\limits_{r\searrow0}h_r(x,y)&=\begin{cases}
\lim\limits_{r\searrow0}\sqrt{\frac{r^2(x-y)^2+2r(x^2+y^2)}{4(r+1)}},&\mbox{if }(x,y)\in R_3^r,\\
\lim\limits_{r\searrow0}\sqrt{\frac{r^2(x+y)^2+2r(x^2+y^2)}{4(r+1)}},&\mbox{if }(x,y)\in R_4^r
\end{cases}\\
&=\begin{cases}
\sqrt{\frac{0}{4}},&\mbox{if }x,y\geq0\mbox{ \textbf{or} }x,y\leq0,\\
\sqrt{\frac{0}{4}},&\mbox{if }x\geq0,y\leq0\mbox{ \textbf{or} }x\leq0,y\geq0
\end{cases}\\
&=0.\qedhere
\end{align*}
\end{proof}
\noindent Figure \ref{fig:h_max} shows the graphs of $h_r$ for several values of $r,$ and demonstrates the effect of $r\searrow0$ and $r\nearrow\infty.$
\begin{figure}[ht]
\begin{center}\includegraphics[scale=0.35]{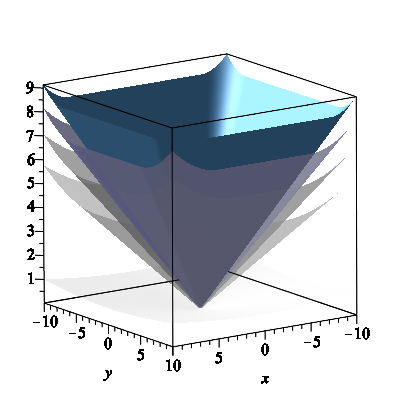}\end{center}
\caption{The function $h_r(x,y)$ from $r=0.01$ (grey) to $r=5$ (blue).}
\label{fig:h_max}
\end{figure}
\begin{ex}
Let $f:\R^2\rightarrow\R,$ $f(x,y)=|x|+|y|.$ Define $g_r(x,y)=[e_r(f^2)](x,y)$ and $h_r=\sqrt{g_r}.$ Then, with $\R^2$ partitioned as
\begin{align*}
R_1^r=&\left\{(x,y):\frac{2}{r+2}y\leq x\leq\frac{r+2}{2}y\right\}\cup\left\{(x,y):\frac{r+2}{2}y\leq x\leq\frac{2}{r+2}y\right\},\\
R_2^r=&\left\{(x,y):-\frac{2}{r+2}y\leq x\leq-\frac{r+2}{2}y\right\}\cup\left\{(x,y):-\frac{r+2}{2}y\leq x\leq-\frac{2}{r+2}\right\},\\
R_3^r=&\left\{(x,y):\frac{r+2}{2}y\leq x,-\frac{r+2}{2}y\leq x\right\}\cup\left\{(x,y):\frac{r+2}{2}y\geq x,-\frac{r+2}{2}y\geq x\right\}\\
R_4^r=&\left\{(x,y):\frac{r+2}{2}x\geq y,-\frac{r+2}{2}x\geq y\right\}\cup\left\{(x,y):\frac{r+2}{2}x\leq y,-\frac{r+2}{2}x\leq y\right\},
\end{align*}
we have
$$P_rh_r(x,y)=\begin{cases}
\left(\frac{(r+2)x-2y}{r+4},\frac{-2x+(r+2)y}{r+4}\right),&\mbox{if }(x,y)\in R_1^r,\\
\left(\frac{(r+2)x+2y}{r+4},\frac{2x+(r+2)y}{r+4}\right),&\mbox{if }(x,y)\in R_2^r,\\
\left(\frac{rx}{r+2},0\right),&\mbox{if }(x,y)\in R_3^r,\\
\left(0,\frac{ry}{r+2}\right),&\mbox{if }(x,y)\in R_4^r,
\end{cases}$$
$$h_r(x,y)=\begin{cases}
\sqrt{\frac{r}{r+4}}|x+y|,&\mbox{if }(x,y)\in R_1^r,\\
\sqrt{\frac{r}{r+4}}|x-y|,&\mbox{if }(x,y)\in R_2^r,\\
\sqrt{\frac{2rx^2+r(r+2)y^2}{2(r+2)}},&\mbox{if }(x,y)\in R_3^r,\\
\sqrt{\frac{r(r+2)x^2+2ry^2}{2(r+2)}},&\mbox{if }(x,y)\in R_4^r,
\end{cases}$$
and $\lim\limits_{r\nearrow\infty}h_r=f,$ $\lim\limits_{r\searrow0}h_r=0.$
\end{ex}
\begin{proof}
Using the same notation introduced in the previous example, it is convenient to split the infimum expression as follows:
\begin{align*}
g_r(\bar{x},\bar{y})&=\inf\limits_{(x,y)\in\R^2}\left\{(|x|+|y|)^2+\frac{r}{2}\left[(x-\bar{x})^2+(y-\bar{y})^2\right]\right\}\\
&=\min\left[I_{x,y>0},I_{x,y<0},I_{x>0,y<0},I_{x<0,y>0},I_{x=0},I_{y=0}\right].
\end{align*}
We omit the remaining details, as the procedure is the same as that of Example \ref{ex:maxnorm}.
Figure \ref{fig:secondone} shows the partitioning of $\R^2$ in the case $r=1,$ and Figure \ref{fig:hsecondone} is the corresponding function $h_1.$
\begin{figure}
\begin{center}\includegraphics[scale=0.03]{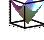}\end{center}
\caption{The four regions of the piecewise function $h_1(x,y).$}
\label{fig:secondone}
\end{figure}
\begin{figure}
\begin{center}\includegraphics[width=0.5\textwidth]{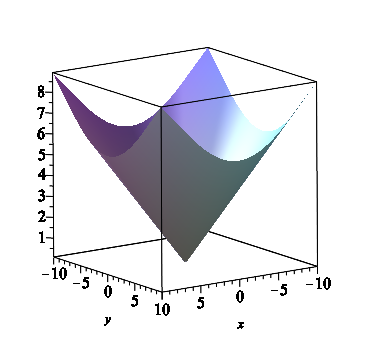}\end{center}
\caption{The function $h_1(x,y).$}
\label{fig:hsecondone}
\end{figure}
\noindent One can verify that $h_r\nearrow f,$ also by the same method as the previous example.
Figure \ref{fig:h_plus} shows the graphs of $h_r$ for several values of $r,$ and demonstrates the effect of $r\searrow0$ and $r\nearrow\infty.$
\begin{figure}
\begin{center}\includegraphics[width=0.5\textwidth]{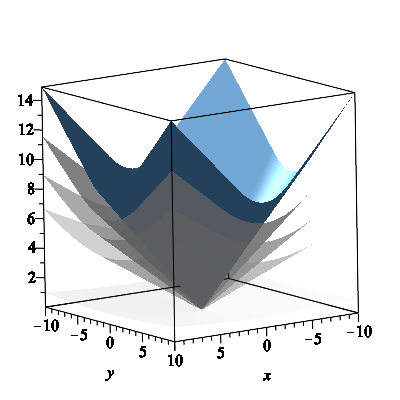}\end{center}
\caption{The function $h_r(x,y)$ from $r=0.01$ (grey) to $r=5$ (blue).}
\label{fig:h_plus}
\end{figure}
\noindent Finally, we consider the unit circle as a function of $r.$ Defining \begin{align*}\rho_r=&\{(x,y):h_r(x,y)=1\}\\
=&\left\{(x,y)\in R_1^r:\sqrt{\frac{r}{r+4}}|x+y|=1\right\}\cup\left\{(x,y)\in R_2^r:\sqrt{\frac{r}{r+4}}|x-y|=1\right\}\cup\\
&\left\{(x,y)\in R_3^r:\sqrt{\frac{2rx^2+r(r+2)y^2}{2(r+2)}}=1\right\}\cup\\
&\left\{(x,y)\in R_4^r:\sqrt{\frac{r(r+2)x^2+2ry^2}{2(r+2)}}=1\right\},
\end{align*}
we observe that as $r\nearrow\infty$ we recover the unit circle of the $1$-norm, $\{(x,y):|x+y|=1\}.$ Figure \ref{fig:rho_plus} displays $\rho_r$ for several values of $r.$\end{proof}
\begin{figure}
\begin{center}\includegraphics[scale=0.25]{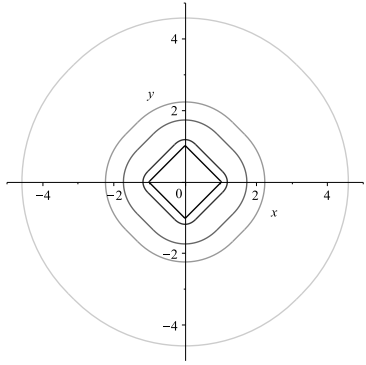}\end{center}
\caption{The unit circle $\rho_r$ for $r=1/10$ (light grey), $r=1/2,$ $r=1,$ $r=5,$ $r=100$ (black).}
\label{fig:rho_plus}
\end{figure}
\noindent The preceding examples, making use of Corollary \ref{normcor}, demonstrate the regularization power of the Moreau envelope; any norm can be converted into a norm that is smooth everywhere except at one point. Other envelope functions do not have this effect, as Example \ref{ex:paschnotsmooth} shows.
\begin{Fact}\label{fact:equiv}\emph{\cite[Exercise 9.12]{rockwets}}
If $f:\R^n\to\R$ is $L$-Lipschitz and $$h(x)=\inf_{y\in\R^n}\{f(y)+L\|y-x\|\}$$ for all $x\in\R^n,$ then $h\equiv f.$\end{Fact}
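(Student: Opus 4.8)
The plan is a direct two-sided estimate on $h(x)$ for a fixed but arbitrary $x\in\R^n$; no compactness, differentiability, or convexity of $f$ is needed, only the Lipschitz bound.

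First I would establish $h(x)\le f(x)$. This is immediate from the definition: the value $y=x$ is admissible in the infimum defining $h(x)$, and it contributes $f(x)+L\|x-x\|=f(x)$. Hence the infimum over all $y$ is at most $f(x)$. In particular the infimum is not $-\infty$ pathology-wise once the reverse bound is in place.

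Next I would establish $h(x)\ge f(x)$ using $L$-Lipschitz continuity. For every $y\in\R^n$ we have $f(x)-f(y)\le |f(x)-f(y)|\le L\|x-y\|$, i.e.
\begin{equation*}
f(y)+L\|y-x\|\ge f(x).
\end{equation*}
Since this holds for all $y$, taking the infimum of the left-hand side over $y\in\R^n$ preserves the inequality, giving $h(x)\ge f(x)$. Combining the two bounds yields $h(x)=f(x)$, and since $x$ was arbitrary, $h\equiv f$.

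I do not expect a genuine obstacle here: the statement is elementary, and the only point worth noting is that the infimum defining $h(x)$ is in fact attained at $y=x$, so $h$ is well defined and finite everywhere. (One could also remark that the argument shows more generally that the Pasch--Hausdorff envelope with constant $L$ reproduces $f$ exactly precisely when $f$ is $L$-Lipschitz, which is why, in contrast to the Moreau envelope, it produces no smoothing of an $L$-Lipschitz norm; this is the role the Fact plays in Example~\ref{ex:paschnotsmooth}.)
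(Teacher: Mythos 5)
Your proof is correct. The paper states this result as a Fact cited to \cite[Exercise 9.12]{rockwets} without giving a proof, and your two-sided estimate --- $h(x)\le f(x)$ by choosing $y=x$, and $h(x)\ge f(x)$ from $f(x)-f(y)\le L\|x-y\|$ --- is exactly the standard argument for it; nothing is missing.
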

\begin{ex}\label{ex:paschnotsmooth}\emph{(Pasch-Hausdorff envelope)}
Let $f:\R^n\rightarrow\overline{\R}$ be a gauge function. Denote by $g_r$ the \emph{Pasch-Hausdorff envelope} of $f:$
$$g_r(x)=\inf\limits_{y\in\R^n}\{f(y)+r\|y-x\|\},$$
where $r>0.$ Then $g_r$ is a gauge, and $\partial g_r(x)=\partial f(p)\cap r\partial\|p-x\|,$ where $p$ is in the proximal mapping of $f$ at $x$. Moreover, when $f$ is an arbitrary norm,
$g_r$ is a norm. However, in that case $g_r$ is not necessarily differentiable
everywhere except at the origin, as is the case of \emph{Corollary \ref{normcor}}
where the Moreau envelope is used.
\end{ex}
\begin{proof}
To prove that $g_r$ is a gauge, we must show that
\begin{itemize}
\item[(i)] $g_r(x)\geq0$ for all $x\in\R^n,$ and $x=0\Rightarrow g_r(x)=0$,
\item[(ii)] $g_r(\alpha x)=\alpha g_r(x)$ for all $\alpha>0,$ and
\item[(iii)] $g_r$ is convex.
\end{itemize}
\noindent(i) Since $\min f(y)=0$ and $\min r\|y-x\|=0,$ we have $$\inf_{y\in\R^n}\{f(y)+r\|y-x\|\}\geq0~\forall x\in\R^n.$$
\noindent We have $g_r(0)=\inf\limits_{y\in\R^n}(f(y)+r\|y-0\|)=0,$ since both terms of the infimum are minimized at $y=0.$ Hence, $x=0\Rightarrow g_r(x)=0.$\medskip\\
\noindent(ii) Let $\alpha>0.$ Then, with $\tilde{y}=y/\alpha,$
\begin{align*}
g_r(\alpha x)&=\inf\limits_{y\in\R^n}\{f(y)+r\|y-\alpha x\|\}\\
&=\inf\limits_{\frac{y}{\alpha}\in\R^n}\left\{\alpha\left(f\left(\frac{y}{\alpha}\right)+
r\left\|\frac{y}{\alpha}-x\right\|\right)\right\}\\
&=\alpha\inf\limits_{\tilde{y}\in\R^n}\{f(\tilde{y})+r\|\tilde{y}-x\|\}=\alpha g_r(x).
\end{align*}
\noindent(iii) Since $(x,y)\mapsto f(y)+r\|y-x\|$ is convex, the marginal function $g_r$ is convex by \cite[Proposition 8.26]{convmono}.\medskip\\
Therefore, $g_r$ is a gauge. The expression for $\partial g_r$ comes from \cite[Proposition 16.48]{convmono}. Now let $f$ be a norm. To show that $g_r$ is a norm, we must show that
\begin{itemize}
\item[(iv)] $g_r(x)=0\Rightarrow x=0$,
\item[(v)] $g_{r}(-x)=g_{r}(x)$ for $x\in\R^n$, and
\item[(vi)] $g_r(x+y)\leq g_r(x)+g_r(y)$ for all $x,y\in\R^n.$
\end{itemize}
\noindent(iv) Let $g_r(x)=0$. Then there exists $\{y_k\}_{k=1}^\infty$ such that \begin{equation}\label{goesto0}f(y_k)+r\|y_k-x\|\to0.\end{equation}As$$0\leq f(y_k)\leq f(y_k)+r\|y_k-x\|\to0,$$ by the Squeeze Theorem we have $f(y_k)\to0$, and since $f$ is a norm, $y_k\to0$. Then by \eqref{goesto0} together with $f(y_k)\to0$, we have $\|y_k-x\|\to0$, i.e. $y_k\to x.$ Therefore, $x=0$.
\medskip\\
\noindent(v) As in Lemma~\ref{evenlem}, one can show that the Pasch-Hausdorff envelope of an even function is
even.
\medskip\\
\noindent(vi) By \cite[Theorem 4.7]{convanalrock}, it suffices that $g_r$ is convex.\medskip\\
Therefore, $g_r$ is a norm. To show that $g_r$ is not necessarily smooth everywhere except at one point, we consider a particular example. On $\R^2,$ define
$$f_1(x)=|x_1|+|x_2|,~f_2(x)=\sqrt{2}\sqrt{x_1^2+x_2^2}.$$
Then
$$g_{\sqrt{2}}(x)=\inf\limits_{y\in\R^n}\left\{f_1(y)+f_2(x-y)\right\}.$$ It is elementary to show that $f_1$ is $\sqrt{2}$-Lipschitz, so by Fact \ref{fact:equiv}, we have that $g_{\sqrt{2}}\equiv f_1$. Hence, $g_{\sqrt{2}}(x)=|x_1|+|x_2|,$ which is not smooth along the lines $x_1=0$ and $x_2=0.$
\end{proof}
\begin{rem}
Further work in this area could be done by replacing $q(x-y)=\frac{1}{2}\|x-y\|^2$ by a general distance function, for example the Bregman distance kernel:
$$D(x,y)=\begin{cases}
f(y)-f(x)-\langle\nabla f(x),y-x\rangle,&\mbox{if }y\in\dom f,x\in\intt\dom f,\\
\infty,&\mbox{otherwise.}
\end{cases}$$See \emph{\cite{chen2012themoreau,bregman}} for details on the Moreau envelope using the Bregman distance.
\end{rem}

%%%%%
\section{Conclusion}\label{sec:conc}
%%%%%

We established characterizations of Moreau envelopes: $e_rf$ is strictly convex if and only if $f$ is essentially strictly convex, and $f=e_rg$ with $g\in\Gamma_0(\R^n)$ if and only if $f^*$ is strongly convex with modulus $1/r.$ We saw differentiability properties of convex Moreau envelopes and used them to establish an explicit expression for the Moreau envelope of a piecewise cubic function. Finally, we presented a method for smoothing an arbitrary gauge function by applying the Moreau envelope, resulting in another norm function that is differentiable everywhere except on the kernel. A special application to an arbitrary norm function is presented.
\bibliographystyle{plain}
\bibliography{Bibliography}{}

\def\cprime{$'$}
\begin{thebibliography}{10}

\bibitem{aragon2007convergence}
F.~Arag\'on, A.~Dontchev, and M.~Geoffroy.
\newblock Convergence of the proximal point method for metrically regular
  mappings.
\newblock In {\em C{SVAA} 2004}, volume~17 of {\em ESAIM Proc.}, pages 1--8.
  EDP Sci., Les Ulis, 2007.

\bibitem{bajaj2017visualization}
A.~Bajaj, W.~Hare, and Y.~Lucet.
\newblock Visualization of the {$\varepsilon$}-subdifferential of piecewise
  linear--quadratic functions.
\newblock {\em Comput. Optim. Appl.}, 67(2):421--442, 2017.

\bibitem{convmono}
H.~Bauschke and P.~Combettes.
\newblock {\em Convex {A}nalysis and {M}onotone {O}perator {T}heory in
  {H}ilbert {S}paces}.
\newblock Springer, New York, 2011.

\bibitem{bayen2016about}
T.~Bayen and A.~Rapaport.
\newblock About {M}oreau--{Y}osida regularization of the minimal time crisis
  problem.
\newblock {\em J. Convex Anal.}, 23(1):263--290, 2016.

\bibitem{bento2014finite}
G.~Bento and J.~Cruz.
\newblock Finite termination of the proximal point method for convex functions
  on {H}adamard manifolds.
\newblock {\em Optimization}, 63(9):1281--1288, 2014.

\bibitem{chen2012themoreau}
Y.~Chen, C.~Kan, and W.~Song.
\newblock The {M}oreau envelope function and proximal mapping with respect to
  the {B}regman distances in {B}anach spaces.
\newblock {\em Vietnam J. Math.}, 40(2-3):181--199, 2012.

\bibitem{convhullalg}
B.~Gardiner and Y.~Lucet.
\newblock Convex hull algorithms for piecewise linear-quadratic functions in
  computational convex analysis.
\newblock {\em Set-Valued Var. Anal.}, 18(3-4):467--482, 2010.

\bibitem{hamilton1982inverse}
R.~Hamilton.
\newblock The inverse function theorem of {N}ash and {M}oser.
\newblock {\em Bull. Amer. Math. Soc.}, 7(1):65--222, 1982.

\bibitem{hare2014derivative}
W.~Hare and Y.~Lucet.
\newblock Derivative-free optimization via proximal point methods.
\newblock {\em J. Optim. Theory Appl.}, 160(1):204--220, 2014.

\bibitem{proxmap}
W.~Hare and R.~Poliquin.
\newblock Prox-regularity and stability of the proximal mapping.
\newblock {\em J. Convex Anal.}, 14(3):589--606, 2007.

\bibitem{hinter2009moreau}
M.~Hinterm\"uller and M.~Hinze.
\newblock Moreau--{Y}osida regularization in state constrained elliptic control
  problems: error estimates and parameter adjustment.
\newblock {\em SIAM J. Numer. Anal.}, 47(3):1666--1683, 2009.

\bibitem{diffprop}
A.~Jourani, L.~Thibault, and D.~Zagrodny.
\newblock Differential properties of the {M}oreau envelope.
\newblock {\em J. Funct. Anal.}, 266(3):1185--1237, 2014.

\bibitem{jourani2017moreau}
A.~Jourani and E.~Vilches.
\newblock Moreau--{Y}osida regularization of state-dependent sweeping processes
  with nonregular sets.
\newblock {\em J. Optim. Theory Appl.}, 173(1):91--116, 2017.

\bibitem{bregman}
C.~Kan and W.~Song.
\newblock The {M}oreau envelope function and proximal mapping in the sense of
  the {B}regman distance.
\newblock {\em Nonlinear Anal.}, 75(3):1385--1399, 2012.

\bibitem{keuthen2015moreau}
M.~Keuthen and M.~Ulbrich.
\newblock Moreau--{Y}osida regularization in shape optimization with geometric
  constraints.
\newblock {\em Comput. Optim. Appl.}, 62(1):181--216, 2015.

\bibitem{practasp}
C.~Lemar{\'e}chal and C.~Sagastiz{\'a}bal.
\newblock Practical aspects of the {M}oreau--{Y}osida regularization:
  theoretical preliminaries.
\newblock {\em SIAM J. Optim.}, 7(2):367--385, 1997.

\bibitem{plqmodel}
Y.~Lucet, H.~Bauschke, and M.~Trienis.
\newblock The piecewise linear-quadratic model for computational convex
  analysis.
\newblock {\em Comput. Optim. Appl.}, 43(1):95--118, 2009.

\bibitem{meng2001piecewise}
F.~Meng and Y.~Hao.
\newblock Piecewise smoothness for {M}oreau--{Y}osida approximation to a
  piecewise {$C^2$} convex function.
\newblock {\em Adv. Math.}, 30(4):354--358, 2001.

\bibitem{mifflin1999properties}
R.~Mifflin, L.~Qi, and D.~Sun.
\newblock Properties of the {M}oreau--{Y}osida regularization of a piecewise
  {$C^2$} convex function.
\newblock {\em Math. Program.}, 84(2):269--281, 1999.

\bibitem{moreau1963}
J.-J. Moreau.
\newblock Propri\'et\'es des applications ``prox''.
\newblock {\em C. R. Acad. Sci. Paris}, 256:1069--1071, 1963.

\bibitem{proximite}
J.-J. Moreau.
\newblock Proximit\'e et dualit\'e dans un espace {H}ilbertien.
\newblock {\em Bull. Soc. Math. France}, 93:273--299, 1965.

\bibitem{planwang2016}
C.~Planiden and X.~Wang.
\newblock Strongly convex functions, {M}oreau envelopes and the generic nature
  of convex functions with strong minimzers.
\newblock {\em SIAM J. Optim.}, 26(2):1341--1364, 2016.

\bibitem{genhess}
R.~Poliquin and R.~Rockafellar.
\newblock Generalized {H}essian properties of regularized nonsmooth functions.
\newblock {\em SIAM J. Optim.}, 6(4):1121--1137, 1996.

\bibitem{convanalrock}
R.~Rockafellar.
\newblock {\em Convex {A}nalysis}.
\newblock Princeton Landmarks in Mathematics. Princeton University Press,
  Princeton, NJ, 1997.

\bibitem{rockwets}
R.~Rockafellar and J.-B. Wets.
\newblock {\em Variational {A}nalysis}.
\newblock Springer-Verlag, Berlin, 1998.

\bibitem{wangklee}
X.~Wang.
\newblock On {C}hebyshev functions and {K}lee functions.
\newblock {\em J. Math. Anal. Appl.}, 368(1):293--310, 2010.

\bibitem{xiao2014proximal}
H.~Xiao and X.~Zeng.
\newblock A proximal point method for the sum of maximal monotone operators.
\newblock {\em Math. Methods Appl. Sci.}, 37(17):2638--2650, 2014.

\bibitem{funcanal}
K.~Yosida.
\newblock {\em Functional {A}nalysis}.
\newblock Die Grundlehren der Mathematischen Wissenschaften. Springer-Verlag,
  Berlin, 1965.

\bibitem{zaslavski2010convergence}
A.~Zaslavski.
\newblock Convergence of a proximal point method in the presence of
  computational errors in {H}ilbert spaces.
\newblock {\em SIAM J. Optim.}, 20(5):2413--2421, 2010.

\end{thebibliography}
\subsection*{Acknowledgement}
The authors thank the anonymous referee for the many useful comments and suggestions made to improve this manuscript. Chayne Planiden was supported by UBC University Graduate Fellowship and by Natural Sciences and Engineering Research Council of Canada. Xianfu Wang was partially supported by a Natural Sciences and Engineering Research Council of Canada Discovery Grant.
\end{document}